\newtheorem{maintheorem}{Theorem}
\newtheorem{maincorollary}{Corollary}
\newtheorem{theorem}{Theorem}[section]
\newtheorem{lemma}[theorem]{Lemma}
\newtheorem{proposition}[theorem]{Proposition}
\newtheorem{corollary}[theorem]{Corollary}
\theoremstyle{definition}
\newtheorem{definition}[theorem]{Definition}
\theoremstyle{remark}
\newtheorem{remark}[theorem]{Remark}
\newtheorem{example}[theorem]{Example}
\let\c@equation=\c@theorem\makeatother
\numberwithin{equation}{section}
\newcommand{\Cat}{\mathit{Cat}}
\newcommand{\co}{\colon\thinspace}
\newcommand{\comm}{\mathit{comm}}
\newcommand{\colim}{\operatornamewithlimits{colim}}
\newcommand{\Fin}{\mathit{Fin}}
\newcommand{\hocolim}{\operatornamewithlimits{hocolim}}
\newcommand{\I}{\mathcal I}
\newcommand{\id}{\text{id}}
\renewcommand{\mod}{\mathit{mod}}
\newcommand{\Map}{\operatorname{Map}}
\newcommand{\Simp}{\mathit{Simp}}
\newcommand{\Sp}{\textit{Sp}}
\newcommand{\Spin}{\mathit{Spin}}
\newcommand{\Th}{\operatorname{TH}}
\newcommand{\U}{\mathcal U}
\newcommand{\xl}{\xleftarrow}
\newcommand{\xr}{\xrightarrow}
\begin{document}

\title{Higher topological Hochschild homology of Thom spectra}
\author{Christian Schlichtkrull}
\address{Department of Mathematics, University of Bergen, Johannes
  Brunsgate 12, 5008 Bergen, Norway}
\email{krull@math.uib.no}
\date{\today}

\begin{abstract}
In this paper we analyze the higher topological Hochschild homology of commutative Thom $S$-algebras. This includes the case of the classical cobordism spectra $MO$, $MSO$, $MU$, etc. We consider the homotopy orbits of the torus action on iterated topological Hochschild homology and we describe the relationship to topological Andr\'{e}-Quillen homology.  
\end{abstract}

\maketitle

\section{introduction}
The simplicial model of the Hochschild homology complex associated to a commutative ring $T$ and a $T$-module $M$ was extended by Loday \cite{Loday89, Loday} to a functor $\mathcal L(T,M)$ from the category of based simplicial sets to the category of simplicial $T$-modules. Evaluated at the simplicial circle $S^1$ this construction gives the Hochschild complex and when applied to the simplicial $n$-spheres the outcome is the higher order Hochschild homology analyzed by Pirashvili \cite{Pi}. These higher order groups stabilize to give the $\Gamma$-homology groups of Robinson-Whitehouse 
\cite{PiRi, RoWh} which in turn are isomorphic to the topological Andr\'{e}-Quillen homology of the associated Eilenberg-Mac Lane spectra, see  \cite{BaMc}.

More recently, Brun, Carlsson, and Dundas \cite{BCD} have introduced a topological version of this whose input is a commutative $S$-algebra $T$ (that is, a commutative symmetric ring spectrum) and a $T$-module $M$. As we recall in Section \ref{LodayS}, the topological version of the Loday construction is a functor $\mathcal L(T,M)$ from the category of based simplicial sets to the category of $T$-modules whose value at the simplical circle is the topological Hochschild homology spectrum $\Th(T,M)$. 
Applied to the simplicial $n$-spheres this construction gives the topological analogue of Pirashvili's higher order Hochschild homology and evaluating the functor $\mathcal L(T,T)$ on the simplicial $n$-torus $(S^1)^n$ the outcome is the $n$-fold iterated topological Hochschild homology spectrum 
$\Th^{(n)}\!(T)$. In this paper we use the term \emph{higher topological Hochschild homology} in a broad sense to mean the outcome obtained by applying the functor $\mathcal L(T,M)$. The interest in this construction is motivated in part by its relation to algebraic K-theory and chromatic phenomena in homotopy theory, see \cite{CDD}. In particular, the action of the topological $n$-torus $\mathbb T^n$ on $\Th^{(n)}\!(T)$ leads to a higher version of topological cyclic homology which is expected to be a strong invariant of iterated algebraic K-theory. 

The purpose of this paper is to analyze the functor $\mathcal L(T,M)$ in the case where $T$ is a commutative Thom $S$-algebra. Let $BF_{h\I}$ be the model of the classifying space for stable spherical fibrations introduced in \cite{Sch2}. This is an $E_{\infty}$ space (it has an action of the Barratt-Eccles operad) and, as we recall in Section \ref{symmetricthomsection}, a map of 
$E_{\infty}$ spaces 
$f\co A\to BF_{h\I}$  gives rise to a commutative Thom $S$-algebra $T(f)$. In particular, when 
$f$ is the map associated to one of the stabilized Lie groups $O$, $SO$, $U$, $\Spin$, \dots, this construction gives a model of the associated cobordism spectrum $MO$, $MSO$, \dots, as a commutative $S$-algebra. (The approach to Thom spectra described here is a symmetric spectrum analogue of the Lewis-May Thom spectrum functor from \cite{LMS}). 
Let us write $\mathbf A$ for the spectrum associated to the 
$E_{\infty}$ space  $A$. Thus, $\mathbf A$ is a positive $\Omega$-spectrum whose $0$th space equals $A$ and the canonical map $A\to \Omega^{\infty}\mathbf A$ from $A$ to the associated infinite loop space is a group completion (see Section \ref{Iinfinite} for details). By definition, $A$ is grouplike if this map is a weak equivalence. Given a based simplicial set $X$ we write $\mathbf A\wedge X$ for the smash product of $\mathbf A$ with the realization of $X$ and we write 
$\Omega^{\infty}(\mathbf A\wedge X)$ for the associated infinite loop space. The following theorem is our main result.  

\begin{maintheorem}\label{Einftyhighthom}
Let $f\co A\to BF_{h\I}$ be a map of $E_{\infty}$ spaces with associated commutative Thom $S$-algebra $T(f)$ and let $M$ be a $T(f)$-module. 
If $A$ is grouplike, then there is a chain of natural stable equivalences of $S$-modules
$$
\mathcal L_X(T(f),M)\simeq M\wedge \Omega^{\infty}(\mathbf A\wedge X)_+
$$
for any based simplicial set $X$.
If $M$ is a commutative  $T(f)$-algebra, then this is an equivalence of $E_{\infty}$ $S$-algebras. 
\end{maintheorem}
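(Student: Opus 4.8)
The plan is to reduce the statement to a general formula for the Loday construction on Thom spectra, which in turn follows from the multiplicativity of the Thom spectrum functor together with a "diagonal" identification of the relevant colimits. First I would recall from Section~\ref{symmetricthomsection} that the functor $f\mapsto T(f)$ from $E_\infty$ spaces over $BF_{h\I}$ to commutative $S$-algebras is symmetric monoidal in the appropriate sense: it carries the coproduct (i.e.\ the levelwise product of $E_\infty$ spaces, with the map to $BF_{h\I}$ given by multiplication) to the smash product of commutative $S$-algebras, and more generally it commutes with the relevant (homotopy) colimits. Since the Loday construction $\mathcal L_X(T,T)$ is, by definition, the colimit over the category of simplices of $X$ of the diagram $s\mapsto T^{\wedge X_s}$ with structure maps induced by the multiplication and unit of $T$, applying $T(-)$ to the corresponding diagram of $E_\infty$ spaces $s\mapsto A^{X_s}$ (with structure maps the $E_\infty$ multiplication and unit of $A$) should yield $\mathcal L_X(T(f),T(f)) \simeq T(g_X)$, where $g_X\co \mathcal L_X(A,A)\to BF_{h\I}$ is the Thom data obtained by taking the colimit in $E_\infty$ spaces. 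The key point here is that $\mathcal L_X(A,A)$, computed in $E_\infty$ spaces, is the "tensor" $A\otimes X$ — the free construction on $A$ and the simplicial set $X$ — and that for a grouplike $E_\infty$ space this tensor is identified via the group-completion/infinite-loop-space machine with $\Omega^\infty(\mathbf A\wedge X)$. This is where grouplikeness enters essentially: for a general $A$ the tensor would only be the ``naive'' bar-type construction, not an infinite loop space of a smash product.

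Next I would handle the module coefficients. Writing $M$ as a $T(f)$-module, the Loday construction $\mathcal L_X(T(f),M)$ is $M \wedge_{T(f)} \mathcal L_X(T(f),T(f))$, i.e.\ the basepoint of $X$ is labelled by $M$ and all other simplices by $T(f)$; equivalently it is $M$ smashed over $T(f)$ with the Thom spectrum $T(g_X)$ of the based tensor $A\otimes_* X$ (the tensor relative to the basepoint, sitting under $A$). By the Thom isomorphism philosophy — more precisely, by the fact that for a map of $E_\infty$ spaces $A\to BF_{h\I}$ the induced Thom spectrum over an $A$-space $B$ splits off as $T(f)\wedge_{\Sigma^\infty_+ A} \Sigma^\infty_+ B$, together with the equivalence $T(g_X)\wedge_{T(f)} M \simeq M \wedge_{\Sigma^\infty_+ A}\Sigma^\infty_+(A\otimes_* X)$ and the Thom-triviality of the $A$-action after smashing into $M$ — one gets
\[
\mathcal L_X(T(f),M)\;\simeq\; M\wedge (A\otimes_* X)_+ \,/\!\!\sim\; \simeq\; M\wedge\bigl(\Omega^\infty(\mathbf A\wedge X)\bigr)_+,
\]
where the last step again uses grouplikeness to identify the based tensor $A\otimes_* X$ with $\Omega^\infty(\mathbf A\wedge X)$ (using that $\mathbf A\wedge X$ models the stabilization of the based tensor, and $X$ based means the basepoint component is the unit). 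I would make this precise by first treating the case $M = T(f)$ and the case $X = S^0$ (where both sides are $M$), then bootstrapping via the fact that both sides send simplicial-set wedges to smash products and preserve the relevant homotopy colimits in $X$, so it suffices to check the formula on a point and on $S^0$ and extend by a skeletal/cell induction on $X$.

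The main obstacle, I expect, is not any single homotopical input but rather organizing the naturality and multiplicativity coherently: one must produce the equivalence as a genuine chain of natural stable equivalences (and of $E_\infty$ $S$-algebra maps when $M$ is a commutative $T(f)$-algebra), which means the intermediate constructions — the tensor $A\otimes X$ in $E_\infty$ spaces, the Thom spectrum functor, the group-completion machine — all have to be exhibited as functorial in $X$ and lax symmetric monoidal, with the group-completion equivalence $A\otimes X \xrightarrow{\sim} \Omega^\infty(\mathbf A\wedge X)$ for grouplike $A$ being multiplicative. Concretely, the hard technical step is establishing that the Thom spectrum functor commutes with the Loday/tensor colimit up to natural equivalence of commutative $S$-algebras — essentially a statement that $T(-)$ is a (homotopy) symmetric monoidal left adjoint on the relevant homotopy category — which I would deduce from the construction of $T(f)$ in Section~\ref{symmetricthomsection} as a suitable bar/colimit construction that manifestly commutes with colimits, combined with a cofibrancy/flatness check so that the colimits compute homotopy colimits. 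Once that is in place, the commutative-algebra refinement for $M$ is essentially formal, since every functor and equivalence used is $E_\infty$-multiplicative by construction.
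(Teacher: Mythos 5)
Your plan reproduces, almost verbatim, the heuristic argument that the paper itself sketches at the opening of Section~\ref{highthomsection}: identify $\mathcal L_X(T(f),T(f))$ with the Thom spectrum of the $\I$-space-level Loday construction $\mathcal L_X(A,A)$, split the latter as $\mathcal L_*(A)\times\mathcal L_X(A,*)$ using grouplikeness, and then base-change to $M$. The paper explicitly explains why this sketch cannot be completed as you propose. The step you yourself flag as ``the hard technical step'' --- a cofibrancy/flatness check ensuring that the Loday colimits compute homotopy colimits and that $T(-)$ carries them to the correct homotopy types --- is precisely where the argument breaks in the symmetric-spectrum setting: the Thom $S$-algebra associated to a cofibrant $\I$-space need not be a cofibrant $S$-algebra, and it is not clear that the $T$-goodness condition (which is what makes $T(-)$ a homotopy functor, Theorem~\ref{Thominvariance}) is preserved under cofibrant replacement in $\I\mathcal U/BF$. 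So ``deduce it from the construction of $T(f)$ as a colimit construction that manifestly commutes with colimits'' is not available here; making that commutation precise is essentially the content of Blumberg's independent work \cite{Bl}, which relies on the $S$-module framework of \cite{EKMM} and a genuinely different argument.

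The paper circumvents the difficulty by abandoning the point-set Loday construction and working instead with the homotopy-invariant B\"okstedt-style model $\Lambda_X(T,M)$, a homotopy colimit over the categories $\I\langle X\rangle$ that represents the derived Loday construction with no cofibrancy hypotheses. The actual comparison (Theorem~\ref{Ispacehighthom}) is a zig-zag through an auxiliary diagram $F_*(A\langle X\rangle,M)$, and the homotopical input replacing your ``tensor $\simeq\Omega^{\infty}(\mathbf A\wedge X)$ for grouplike $A$'' is twofold: first, a connectivity estimate --- via B\"okstedt's approximation lemma after reducing to convergent $A$ --- showing that the shearing map $\prod_{x\in X}A(\theta_x)\to A(\theta_X)\times\prod_{x\in\bar X}A(\theta_x)$ is highly connected, which is exactly where grouplikeness enters through a triangular-matrix argument on homotopy groups; and second, the Bousfield--Friedlander theorem identifying $A_{h\I}(X)$ with $\Omega^{\infty}(\mathbf A\wedge X)$ for the very special $\Gamma$-space $A_{h\I}$. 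The module case is then handled, much as you suggest, by base change along a cofibrant replacement of $\Lambda_*(T)$, and the $E_{\infty}$ refinement comes from the operad-action-on-homotopy-colimits machinery of the Appendix rather than from strict symmetric monoidality. In short: your outline is the right heuristic, but the mechanism you propose for its central step is the one the paper argues does not work in this framework, and the proof as written substitutes an entirely different technical device for it.
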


Here and in the following we implicitly consider the \emph{derived} version of $\mathcal L_X(T,M)$ when making statements about its homotopy type. That is,  we implicitly assume that $T(f)$ has been replaced by a cofibrant commutative $S$-algebra and that $M$ has been replaced by a cofibrant module. 
For the last statement in the theorem we view the infinite loop space $\Omega^{\infty}(\mathbf A\wedge X)$ as an $E_{\infty}$ space and $M\wedge \Omega^{\infty}(\mathbf A\wedge X)_+$ has the induced $E_{\infty}$ action. We shall give a more precise formulation of this in Theorem 
\ref{Ispacehighthom} where we describe the $E_{\infty}$ structure in terms of the $\Gamma$-space (in the sense of Segal \cite{Se}) associated to $A$. The simplest instance of Theorem 
\ref{Einftyhighthom} is the case $X=S^0$ where we get the stable equivalence
$$
M\wedge T(f)\simeq M\wedge A_+.
$$
For $T(f)$ oriented (that is, $f$ factors over $BSF_{h\I}$) and $M$ the integral Eilenberg-Mac Lane spectrum this is the homotopy theoretical version of the Thom isomorphism theorem. In this way one may view Theorem \ref{Einftyhighthom} as a generalized Thom isomorphism theorem for commutative $S$-algebras. 
We now consider some immediate corollaries. Let in the following $f\co A\to BF_{h\I}$ be a map of $E_{\infty}$-spaces with associated commutative Thom $S$-algebra $T(f)$. Consider first the case of the simplicial circle 
$X=S^1$ and notice that $\Omega^{\infty}(\mathbf A\wedge S^1)$ may be identified with the first space $BA$ in the spectrum $\mathbf A$. Hence we recover the calculation of topological Hochschild homology from \cite{BCS}. 

\begin{maincorollary}
If $A$ is grouplike, then there is a stable equivalence of $S$-modules
$$
\Th(T(f),M)\simeq M\wedge BA_+
$$
which is an equivalence of $E_{\infty}$ $S$-algebras if $M$ is a commutative $T$-algebra.
\end{maincorollary}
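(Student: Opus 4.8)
The plan is to read off this statement as the special case $X=S^1$ of Theorem~\ref{Einftyhighthom}. Recall from Section~\ref{LodayS} that the topological Hochschild homology spectrum is by definition the value of the Loday functor on the simplicial circle, so that $\Th(T(f),M)=\mathcal L_{S^1}(T(f),M)$, with the standing convention that the derived functor is meant, i.e.\ that $T(f)$ and $M$ have been replaced by cofibrant objects. Applying Theorem~\ref{Einftyhighthom} with $X=S^1$ therefore produces a chain of natural stable equivalences of $S$-modules
$$
\Th(T(f),M)=\mathcal L_{S^1}(T(f),M)\;\simeq\;M\wedge\Omega^{\infty}(\mathbf A\wedge S^1)_+,
$$
consisting of maps of $E_{\infty}$ $S$-algebras whenever $M$ is a commutative $T(f)$-algebra.

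It then remains only to identify $\Omega^{\infty}(\mathbf A\wedge S^1)$ with $BA$. For this I would use that, by the construction of $\mathbf A$ recalled in Section~\ref{Iinfinite}, $\mathbf A$ is a positive $\Omega$-spectrum whose $0$th space is $A$ and whose first space is, by definition, $BA$. Smashing with the simplicial circle shifts the spectrum, so $\Omega^{\infty}(\mathbf A\wedge S^1)$ is the $0$th space of a delooping of $\mathbf A$; since $\mathbf A$ is a positive $\Omega$-spectrum this delooping is the first space of $\mathbf A$, giving a weak equivalence $\Omega^{\infty}(\mathbf A\wedge S^1)\simeq BA$. Substituting this into the display above yields the asserted equivalence $\Th(T(f),M)\simeq M\wedge BA_+$.

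Since this is a formal consequence of Theorem~\ref{Einftyhighthom}, I do not expect a serious obstacle; the only point needing a little care is to verify that the identification $\Omega^{\infty}(\mathbf A\wedge S^1)\simeq BA$ is an equivalence of $E_{\infty}$ spaces compatible with the $E_{\infty}$ $S$-algebra structure furnished by Theorem~\ref{Einftyhighthom}, so that $M\wedge\Omega^{\infty}(\mathbf A\wedge S^1)_+\simeq M\wedge BA_+$ as $E_{\infty}$ $S$-algebras when $M$ is a commutative $T(f)$-algebra. The natural way to do this is to work with the description of the $E_{\infty}$ structure in terms of the Segal $\Gamma$-space of $A$, using the standard fact that for a special $\Gamma$-space the passage to the associated spectrum smashed with $S^1$ agrees with the canonical delooping coming from the $\Gamma$-space structure. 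Granting this compatibility, the argument is complete.
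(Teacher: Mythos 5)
Your proposal is correct and follows essentially the same route as the paper: the corollary is obtained by specializing Theorem~\ref{Einftyhighthom} to $X=S^1$ and identifying $\Omega^{\infty}(\mathbf A\wedge S^1)$ with the first space $BA=A_{h\I}(S^1)$ of the positive $\Omega$-spectrum $\mathbf A$, exactly as the paper does (via the Bousfield--Friedlander comparison of $\mathbf A\wedge X$ with $\mathbf A(S\wedge X)$ for the very special $\Gamma$-space $A_{h\I}$). Your closing remark about checking the $E_{\infty}$ compatibility through the $\Gamma$-space description is the right point of care and matches the paper's treatment in Theorem~\ref{Ispacehighthom}.
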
 

This result complements that of \cite{BCS} in that we here keep track of the $E_{\infty}$ structure when $M$ is a commutative $T$-algebra. A similar result has been obtained independently by Blumberg \cite{Bl} using a different method. Working in the $S$-module framework from \cite{EKMM}, Blumberg proves more generally that the Thom spectrum functor preserves indexed colimits which is closely related to the statement in Theorem \ref{Einftyhighthom}. It should be emphasized that the main result in \cite{BCS} is a calculation of the topological Hochschild homology of any (not necessarily commutative) Thom $S$-algebra. This involves an interesting action of the stable Hopf map which is not visible in the commutative case. 

Letting $T=M$ and applying Theorem \ref{Einftyhighthom} to the simplicial n-torus 
we get a calculation of iterated topological Hochschild homology. 

\begin{maincorollary}
If $A$ is grouplike, then 
there is a natural stable equivalence of $E_{\infty}$ $S$-algebras
$$
\Th^{(n)}\!(T(f))\simeq T(f)\wedge\Omega^{\infty}(\mathbf A\wedge \mathbb T^n)_+.
$$
\end{maincorollary}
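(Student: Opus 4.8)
The plan is to obtain the corollary as the special case $M=T(f)$, $X=(S^1)^n$ of Theorem~\ref{Einftyhighthom}. Recall from Section~\ref{LodayS} that the $n$-fold iterated topological Hochschild homology $\Th^{(n)}\!(T(f))$ is, by construction, the value $\mathcal L_{(S^1)^n}(T(f),T(f))$ of the Loday functor on the simplicial $n$-torus, and that the realization of $(S^1)^n$ is the topological $n$-torus $\mathbb T^n$. Since by definition smashing a spectrum with a based simplicial set means smashing it with the realization, we have $\mathbf A\wedge(S^1)^n\simeq\mathbf A\wedge\mathbb T^n$ and hence a natural identification $\Omega^{\infty}(\mathbf A\wedge(S^1)^n)\cong\Omega^{\infty}(\mathbf A\wedge\mathbb T^n)$ of $E_{\infty}$ spaces.

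With this in place I would argue as follows. The module $M=T(f)$ is a commutative $T(f)$-algebra via the unit map, so the multiplicative part of Theorem~\ref{Einftyhighthom} applies; evaluating at $X=(S^1)^n$ it yields a chain of natural stable equivalences of $E_{\infty}$ $S$-algebras
$$
\Th^{(n)}\!(T(f))=\mathcal L_{(S^1)^n}(T(f),T(f))\;\simeq\;T(f)\wedge\Omega^{\infty}\bigl(\mathbf A\wedge(S^1)^n\bigr)_+ .
$$
Composing with the identification from the previous paragraph gives the asserted equivalence. Naturality in $f$ is inherited directly from the naturality statement in the theorem, and naturality of the equivalence in the variable $X$ makes it compatible with the $\mathbb T^n$-action on the torus and with the $\Sigma_n$-symmetry permuting the circle factors, although neither of these is asserted in the present formulation.

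Since everything here is a formal consequence of Theorem~\ref{Einftyhighthom}, there is no genuine obstacle; the only point that deserves a sentence of justification is the identification of $\mathcal L_{(S^1)^n}(T(f),T(f))$ with the $n$-fold iterate of the topological Hochschild homology functor. This rests on the product formula $\mathcal L_{X\times Y}(T,T)\simeq\mathcal L_X\bigl(\mathcal L_Y(T,T)\bigr)$ for a commutative $S$-algebra $T$ together with $\mathcal L_{S^1}(T,T)=\Th(T,T)$, and one should check that this iteration is carried out with the derived (cofibrant) models in force at each stage, so that the left-hand side above really computes the homotopy type $\Th^{(n)}\!(T(f))$.
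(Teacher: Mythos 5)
Your proposal is correct and matches the paper's own (one-line) argument: the corollary is obtained by setting $M=T(f)$ and $X=(S^1)^n$ in Theorem~\ref{Einftyhighthom}, using that $\Th^{(n)}\!(T(f))$ is by definition the Loday construction $\mathcal L_{(S^1)^n}(T(f),T(f))$ evaluated on the simplicial $n$-torus. Your closing remark about the product formula is not needed here, since the paper takes the value on $(S^1)^n$ as the definition of iterated $\Th$ rather than deriving it from iterating $\mathcal L_{S^1}$.
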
 

Using the subdivision techniques from \cite{McC}  this in particular identifies the action of the Adams operations (induced by the power maps of $\mathbb T^n$). However, it is clear that this equivalence does not reflect the $\mathbb T^n$-equivariant structure of $\Th^{(n)}\!(T(f))$. In order to get information about the latter we instead consider the based simplicial set
$(S^1)^n_+$ obtained by adjoining a disjoint base point. For $A$ grouplike it follows from   
Theorem \ref{Einftyhighthom} that there is a stable equivalence
\begin{equation} \label{equivariantstable}
M\wedge\Th^{(n)}\!(T(f))\simeq \mathcal L_{(S^1)^n_+}(T(f),M)\simeq M\wedge 
\Omega^{\infty}(\mathbf A\wedge \mathbb T^n_+)_+
\end{equation}
which is now $\mathbb T^n$-equivariant. We therefore get an induced equivalence 
of homotopy orbit spectra for any compact subgroup of $\mathbb T^n$. Letting $M$ be the integral Eilenberg-Mac Lane spectrum we show in Section \ref{homotopyorbit} that this leads to the following conclusion about the integral spectrum homology groups.

\begin{maincorollary}\label{homotopyorbitcorollary}
If $A$ is grouplike and $T(f)$ oriented there is an isomorphism
$$
H_*(\Th^{(n)}\!(T(f))_{hG})\simeq H_*(\Map(\mathbb T^n,B^nA)_{hG})
$$
for any compact subgroup $G$ of $\mathbb T^n$.
\end{maincorollary}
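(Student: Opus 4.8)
The plan is to feed $M$ the integral Eilenberg-Mac Lane spectrum $H\mathbb Z$ into the $\mathbb T^n$-equivariant equivalence \eqref{equivariantstable}, pass to $G$-homotopy orbits, and then reinterpret the infinite loop space $\Omega^\infty(\mathbf A\wedge\mathbb T^n_+)$ as the mapping space $\Map(\mathbb T^n,B^nA)$ by an equivariant duality argument for the torus.

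First I would record why $H\mathbb Z$ is an admissible coefficient object --- the one place the hypothesis that $T(f)$ is oriented is used. Since $f$ lifts through $BSF_{h\I}$, the associated stable spherical fibration carries an integral Thom class, and this Thom class is represented by a map of $S$-algebras $T(f)\to H\mathbb Z$; hence $H\mathbb Z$ is a $T(f)$-algebra, in particular a $T(f)$-module, and Theorem \ref{Einftyhighthom} together with \eqref{equivariantstable} applies with $M=H\mathbb Z$. (Without orientability one would be confined to $\mathbb Z/2$ coefficients.) Granting the $\mathbb T^n$-equivariance of \eqref{equivariantstable}, I would restrict the action along $G\hookrightarrow\mathbb T^n$ and pass to $G$-homotopy orbits. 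Since $H\mathbb Z\wedge(-)$ and $\Sigma^\infty_+(-)$ commute with homotopy colimits, and the spectrum homology of a suspension spectrum is the ordinary homology of the underlying space, the resulting equivalence of homotopy-orbit spectra yields
$$
H_*\bigl(\Th^{(n)}\!(T(f))_{hG}\bigr)\;\cong\;H_*\bigl(\Omega^\infty(\mathbf A\wedge\mathbb T^n_+)_{hG}\bigr),
$$
so the task reduces to identifying $\Omega^\infty(\mathbf A\wedge\mathbb T^n_+)$ with $\Map(\mathbb T^n,B^nA)$ as a $\mathbb T^n$-space.

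This last identification is the heart of the argument. Since $A$ is grouplike, $\mathbf A$ is a connective $\Omega$-spectrum whose $n$-th space is $B^nA$, so the $(\Sigma^\infty_+,\Omega^\infty)$-adjunction gives $\Map(\mathbb T^n,B^nA)\simeq\Omega^\infty F(\Sigma^\infty_+\mathbb T^n,\Sigma^n\mathbf A)$, where $F$ denotes the function spectrum. Now $\mathbb T^n$ is a compact Lie group with trivial adjoint representation --- equivalently, a closed manifold carrying its left-invariant framing --- so Atiyah duality provides a $\mathbb T^n$-equivariant equivalence $\Sigma^n D(\Sigma^\infty_+\mathbb T^n)\simeq\Sigma^\infty_+\mathbb T^n$, with $\mathbb T^n$ acting by translation. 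Combining this with the natural equivalence $F(K,\mathbf E)\simeq D(K)\wedge\mathbf E$ for the dualizable spectrum $K=\Sigma^\infty_+\mathbb T^n$ produces a $\mathbb T^n$-equivariant equivalence $F(\Sigma^\infty_+\mathbb T^n,\Sigma^n\mathbf A)\simeq\mathbf A\wedge\mathbb T^n_+$, and applying $\Omega^\infty$ gives the desired $\mathbb T^n$-equivariant equivalence of spaces $\Omega^\infty(\mathbf A\wedge\mathbb T^n_+)\simeq\Map(\mathbb T^n,B^nA)$. Passing to $G$-homotopy orbits and combining with the displayed isomorphism completes the proof.

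I expect the main obstacle to be making this duality equivalence honestly $\mathbb T^n$-equivariant at the point-set level: one is working with Borel $\mathbb T^n$-objects in symmetric spectra, so one must choose an evaluation/coevaluation pair for $\Sigma^\infty_+\mathbb T^n$ compatible with translation --- conveniently obtained by realizing $\mathbb T^n$ as a simplicial abelian group and using its diagonal and fold maps --- and then check that $\Omega^\infty$ carries the resulting equivalence of naive $\mathbb T^n$-spectra to an underlying equivalence of $\mathbb T^n$-spaces. Once this is in place, the homotopy-orbit bookkeeping, the passage between spectrum homology and ordinary homology, and the compatibility of the isomorphism with subgroup inclusions $G\subseteq\mathbb T^n$ are all routine.
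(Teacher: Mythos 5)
Your proposal is correct in outline and its first half coincides with the paper's argument: the orientation makes $H\mathbb Z$ a $T(f)$-algebra, one applies the main theorem to $(S^1)^n_+$, and one passes to $G$-homotopy orbits, using that smashing with $H\mathbb Z$ and $\Sigma^\infty_+$ commute with homotopy orbits. Where you genuinely diverge is in the key identification of $\Omega^{\infty}(\mathbf A\wedge\mathbb T^n_+)$ with $\Map(\mathbb T^n,B^nA)$ as a $\mathbb T^n$-space. You invoke equivariant Atiyah duality for the framed torus, $\Sigma^nD(\Sigma^\infty_+\mathbb T^n)\simeq\Sigma^\infty_+\mathbb T^n$, together with $F(K,\mathbf E)\simeq D(K)\wedge\mathbf E$; this is correct in principle (it is the mechanism behind the free-loop-space description of $\Th$ of suspension spectra), but, as you note yourself, it requires genuine work to realize the duality data translation-equivariantly in the model at hand. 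The paper (Lemma \ref{freetorus}) instead exploits the observation that only a $\mathbb T^n$-equivariant map which is a \emph{non-equivariant} equivalence is needed, since homotopy orbits preserve such maps: it writes down the evident equivariant map $A_{h\I}((S^1)^n_+)\to\Map(\mathbb T^n,A_{h\I}(S^n))$ adjoint to ``act by $\mathbb T^n$, then project $(S^1_+)^{\wedge n}\to(S^1)^{\wedge n}$'', and verifies that it is a weak equivalence using the very special $\Gamma$-space property of $A_{h\I}$ (the case $n=1$ is \cite{Sch1}, Proposition 7.1, and the general case follows by factoring through intermediate mapping spaces $\Map(\mathbb T^i,A_{h\I}(S^i\wedge(S^1_+)^{\wedge(n-i)}))$). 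That route buys you a completely explicit point-set-level equivariant map and avoids duality altogether; your route buys conceptual clarity (the equivalence is literally Spanier--Whitehead duality for $\mathbb T^n$) at the cost of the model-dependent coherence issues you flag in your last paragraph. One further small point: you ``grant'' the $\mathbb T^n$-equivariance of the chain of equivalences (\ref{equivariantstable}); in the paper this is part of the proof and follows because the chain is the realization of maps of $n$-fold multi-cyclic $S$-modules, so you should record that justification rather than assume it.
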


Here the subscript $(-)_{hG}$ on the left hand side indicates the homotopy orbit spectrum and the right hand side is the classical integral  Borel homology of the mapping space $\Map(\mathbb T^n,B^nA)$ with $\mathbb T^n$ acting on the domain. Spelling this out for  $MU$ in the case $n=1$ we get that
$$
H_*(\Th(MU)_{hG})\simeq H_*(\Map(\mathbb T,SU)_{hG})
$$
for any compact subgroup $G$ of $\mathbb T$.
These homotopy orbit spectra are important since they interpolate between the fixed point spectra via the fundamental cofibration sequences, see \cite{HM}. For instance, we have the cofibration sequence 
$$
\Th(MU)_{hC_{p^n}}\to \Th(MU)^{C_{p^n}}\to \Th(MU)^{C_{p^{n-1}}}
$$
where $p$ is a prime number and $C_{p^n}$ denotes the cyclic group of order $p^n$. It is proved in \cite{BCD} that there are analogous cofibration sequences relating the fixed points of iterated  topological Hochschild homology. Thus, even though Theorem~\ref{Einftyhighthom} does not extend directly to a calculation of the fixed point spectra our work does give the initial input for such a calculation. We also remark that the stable equivalence (\ref{equivariantstable}) leads to a description of the continuous homology of the homotopy fixed point spectra 
$\Th^{(n)}\!(T(f))^{hG}$. 
This was pointed out to us by S.\ Lun\o e-Nielsen and J.~Rognes. 

As we recall in Section \ref{LodayS}, the Loday construction $\mathcal L_X(T,T)$ may be identified with the tensor product $X\otimes T$ in the category of commutative $S$-algebras. In the case $X=S^n$ we deduce from Theorem \ref{Einftyhighthom} the stable equivalence
\begin{equation}\label{S^nequivalence}
S^n\otimes T(f)\simeq T(f)\wedge B^nA_+.
\end{equation}
When $f$ is the map $BG\to BF$ associated to a stabilized Lie group $G$, the above stable equivalence has the following cobordism interpretation. 

\begin{maincorollary}
Let $G$ denote one of the stabilized Lie groups $O$, $SO$, $U$, $\Spin$, \dots, and let 
$\Omega^G_*(-)$ be the associated cobordism theory. Then there is an isomorphism of graded rings
$$
\pi_*(S^n\otimes MG)\simeq \Omega^G_*(B^{n+1}G).
$$ 
\end{maincorollary}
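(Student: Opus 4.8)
The plan is to deduce this from the stable equivalence (\ref{S^nequivalence}) together with the classical Thom--Pontryagin identification of $MG$-homology with geometric bordism. Let $f\co BG\to BF$ be the map of $E_\infty$ spaces associated to one of the stabilized Lie groups $G=O,SO,U,\Spin,\dots$, so that $T(f)=MG$. Here $A=BG$ is connected, hence grouplike, so Theorem \ref{Einftyhighthom} applies; moreover the $n$-th space of the spectrum $\mathbf A$ associated to $A=BG$ is $B^{n}A=B^{n+1}G$. Thus (\ref{S^nequivalence}) specializes to an equivalence of $E_\infty$ $S$-algebras
$$
S^{n}\otimes MG\simeq MG\wedge (B^{n+1}G)_{+}.
$$

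Passing to homotopy groups, the right-hand side computes the (unreduced) $MG$-homology $MG_{*}(B^{n+1}G)$. By the Thom--Pontryagin construction --- that is, by Thom's theorem realizing $MG$ as the spectrum representing $G$-bordism for each of the groups in question --- there is a natural isomorphism $MG_{*}(X)\cong\Omega^{G}_{*}(X)$ of homology theories, and hence an isomorphism of graded abelian groups
$$
\pi_{*}(S^{n}\otimes MG)\cong\Omega^{G}_{*}(B^{n+1}G).
$$

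To promote this to an isomorphism of graded rings I would argue as follows. The tensor $S^{n}\otimes MG$ is a commutative $S$-algebra, and by the last clause of Theorem \ref{Einftyhighthom}, made precise through the $\Gamma$-space description in Theorem \ref{Ispacehighthom}, the displayed equivalence is one of $E_\infty$ $S$-algebras, with the $E_\infty$ structure on $MG\wedge(B^{n+1}G)_{+}$ induced by the multiplication of $MG$ and the infinite loop product $\mu\co B^{n+1}G\times B^{n+1}G\to B^{n+1}G$. On homotopy groups this product is the Pontryagin-type product on $MG_{*}(B^{n+1}G)$ obtained by composing the external product $MG_{*}(X)\otimes MG_{*}(Y)\to MG_{*}(X\times Y)$ --- available since $MG$ is a commutative ring spectrum --- with $\mu_{*}$. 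Under the Thom--Pontryagin isomorphism the external product corresponds to the Cartesian product of $G$-manifolds, so the induced product on $\Omega^{G}_{*}(B^{n+1}G)$ sends a pair of classes $[M\xr{g}B^{n+1}G]$, $[M'\xr{g'}B^{n+1}G]$ to $[M\times M'\xr{\mu\circ(g\times g')}B^{n+1}G]$. This is exactly the ring structure carried by the bordism of an infinite loop space, so the isomorphism above is one of graded rings.

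The step I expect to be the main obstacle is the last one: one must use the refined $E_\infty$-statement rather than the underlying stable equivalence alone, and verify that the Thom--Pontryagin identification $MG_{*}(-)\cong\Omega^{G}_{*}(-)$ is compatible --- as a natural transformation of ring-valued functors on infinite loop spaces --- with the $\Gamma$-space model of the $E_\infty$ structure produced in Theorem \ref{Ispacehighthom}. Concretely this amounts to tracing the product through the Thom isomorphism at the level of spectra and matching it with the geometric pairing of bordism classes; once this bookkeeping is done the remaining steps are formal.
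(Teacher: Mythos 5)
Your proposal is correct and follows exactly the route the paper intends: the corollary is stated as an immediate consequence of the equivalence (\ref{S^nequivalence}) with $A=BG$ (connected, hence grouplike, and $B^nA=B^{n+1}G$), followed by the Pontryagin--Thom identification $MG_*(X)\cong\Omega^G_*(X)$, with the ring structure coming from the $E_\infty$ structure of the equivalence and the H-space product on $B^{n+1}G$. The paper offers no further details, so your careful treatment of the multiplicative bookkeeping is a reasonable elaboration of the same argument rather than a different approach.
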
 

Working in the category of $S$-algebras introduced in \cite{EKMM}, Basterra defines in \cite{Ba} the \emph{topological cotangent complex} $\mathbf{L}\mathbf{\Omega}_RT$ associated to a commutative $S$-algebra $R$ and a commutative $R$-algebra $T$. This is a $T$-module whose homotopy groups are the topological Andr\'{e}-Quillen homology groups of $T$. In the following we shall only be interested in the case where $R$ equals $S$. We define the reduced tensor product $S^n\tilde\otimes T$ to be the cofiber of the map $T\to S^n\otimes T$ induced by the inclusion of the base point of $S^n$. There are canonical stabilization maps
$$
S^1\wedge(S^n\tilde\otimes T)\to S^{n+1}\tilde\otimes T
$$
and it follows from the discussion in \cite{BM}, Section 3, that 
$\mathbf{L}\mathbf{\Omega}_ST$ may be identified with the stabilization 
$\hocolim_n\Omega^n(S^n\tilde\otimes T)$. Here we implicitly choose a cofibrant replacement of $T$ when defining $S^n\tilde\otimes T$. Even though Basterra-Mandell works in the framework from \cite{EKMM} the homotopy invariant nature of this expression makes it valid also with our interpretation of $S$-algebras (that is, symmetric ring spectra). In the case of the commutative Thom $S$-algebra $T(f)$ associated to an  $E_{\infty}$ map $f\co A\to BF$ with $A$ grouplike, we see from (\ref{S^nequivalence}) that 
$S^n\tilde\otimes T(f)$ can be identified with $T(f)\wedge B^nA$. Furthermore, since the spectra $T(f)$ and $\mathbf A$ are connective, there is a stable equivalence
$$
\hocolim_{n}\Omega^n(T(f)\wedge B^nA)\simeq T(f)\wedge \mathbf A.
$$
Hence we recover the following result of Basterra--Mandell.

\begin{maincorollary}[\cite{BM}]
If $A$ is grouplike, then 
$\mathbf{L}\mathbf{\Omega}_ST(f)$ is stably equivalent to $T(f)\wedge\mathbf A$.
\end{maincorollary}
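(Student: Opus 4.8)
The plan is to combine Theorem~\ref{Einftyhighthom} in the form \eqref{S^nequivalence} with the Basterra--Mandell description of the topological cotangent complex as a stabilization. I would begin by recalling from \cite{BM}, Section~3, that for a cofibrant commutative $S$-algebra $T$ the cotangent complex $\mathbf{L}\mathbf{\Omega}_ST$ is naturally stably equivalent to $\hocolim_n\Omega^n(S^n\tilde\otimes T)$, with the maps in the colimit system induced by the stabilization maps $S^1\wedge(S^n\tilde\otimes T)\to S^{n+1}\tilde\otimes T$. Since this expression only involves homotopy colimits, loop spectra and the reduced tensor products $S^n\tilde\otimes T$, all of which have the expected homotopy type in the symmetric spectrum framework, the description remains valid in our setting, and it suffices to analyze $\hocolim_n\Omega^n(S^n\tilde\otimes T(f))$ when $T(f)$ is a cofibrant commutative Thom $S$-algebra with $A$ grouplike.

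Next I would rewrite the terms of this colimit using \eqref{S^nequivalence}. Since $B^nA$ is connected, the base point inclusion $T(f)\to S^n\otimes T(f)$ becomes, under the equivalence $S^n\otimes T(f)\simeq T(f)\wedge B^nA_+$, the inclusion of the wedge summand $T(f)$ in $T(f)\wedge(B^nA\vee S^0)\simeq(T(f)\wedge B^nA)\vee T(f)$; hence $S^n\tilde\otimes T(f)\simeq T(f)\wedge B^nA$, the reduced smash of the spectrum $T(f)$ with the based space $B^nA$. Because $A$ is grouplike, $\mathbf A$ is an $\Omega$-spectrum whose $n$th space is weakly equivalent to $B^nA$, and under the identifications above the stabilization maps of the previous paragraph correspond to the maps $\Omega^n(T(f)\wedge B^nA)\to\Omega^{n+1}(T(f)\wedge B^{n+1}A)$ obtained by smashing $T(f)$ with the structure maps $B^nA\to\Omega B^{n+1}A$ of $\mathbf A$ and composing with the natural assembly map $T(f)\wedge\Omega(-)\to\Omega(T(f)\wedge-)$. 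Checking that \eqref{S^nequivalence} is compatible with these two stabilization structures --- that the relevant squares comparing the Basterra--Mandell stabilization maps with the spectrum structure maps of $\mathbf A$ commute up to coherent homotopy --- is the step I expect to require the most care; the remaining arguments are formal manipulations with homotopy colimits.

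Finally I would identify $\hocolim_n\Omega^n(T(f)\wedge B^nA)$ with $T(f)\wedge\mathbf A$. Since $\mathbf A$ is connective, its $n$th space $B^nA$ is $(n-1)$-connected, so by Freudenthal's theorem the counit maps $\Sigma^{-n}\Sigma^\infty B^nA\to\mathbf A$ exhibit $\mathbf A$ as the homotopy colimit $\hocolim_n\Sigma^{-n}\Sigma^\infty B^nA$, with the colimit system just described. Smashing with the connective spectrum $T(f)$ and using that $-\wedge T(f)$ commutes with homotopy colimits and with desuspension $\Sigma^{-n}$ (equivalently $\Omega^n$), together with the natural equivalence $T(f)\wedge\Sigma^\infty B^nA\simeq T(f)\wedge B^nA$, yields
$$
T(f)\wedge\mathbf A\simeq\hocolim_n\Sigma^{-n}\bigl(T(f)\wedge\Sigma^\infty B^nA\bigr)\simeq\hocolim_n\Omega^n\bigl(T(f)\wedge B^nA\bigr).
$$
Combining this with the identifications of the previous two paragraphs gives $\mathbf{L}\mathbf{\Omega}_ST(f)\simeq\hocolim_n\Omega^n(S^n\tilde\otimes T(f))\simeq\hocolim_n\Omega^n(T(f)\wedge B^nA)\simeq T(f)\wedge\mathbf A$, which is the desired stable equivalence.
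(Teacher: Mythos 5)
Your argument follows the same route as the paper: identify $\mathbf{L}\mathbf{\Omega}_ST(f)$ with the stabilization $\hocolim_n\Omega^n(S^n\tilde\otimes T(f))$ via Basterra--Mandell, use the equivalence $S^n\otimes T(f)\simeq T(f)\wedge B^nA_+$ to get $S^n\tilde\otimes T(f)\simeq T(f)\wedge B^nA$, and then use connectivity of $T(f)$ and $\mathbf A$ to identify the resulting colimit with $T(f)\wedge\mathbf A$. The extra details you supply (the wedge splitting and the compatibility of stabilization maps, which the paper leaves implicit) are correct, so this is essentially the paper's proof.
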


In the above we have implicitly replaced $T$ and $M$ by cofibrant objects when discussing the homotopy type of $\mathcal L_X(T,M)$. However, for many purposes it is more convenient to work with an explicit model of this functor which is homotopy invariant in $T$ and $M$ without cofibrancy assumptions. Thus, a major part of the technical work in this paper goes into the construction of such a homotopy invariant model $\Lambda_X(T,M)$. (The only requirement on $T$ and $M$ for this to represent the correct ``derived'' homotopy type of $\mathcal L_X(T,M)$ is that they be levelwise well-based). We discuss the technical difficulties encountered if one tries to prove Theorem \ref{Einftyhighthom} using the Loday construction directly in the beginning of Section \ref{highthomsection}. A further motivation for introducing the functor $\Lambda_X(T,M)$ is that it naturally leads to a definition of higher topological cyclic homology. We shall return to this in a separate paper and include a brief discussion of the relevant issues in Section \ref{homotopyorbit}. (Another and technically quite different homotopy invariant model which leads to a good equivariant theory has been introduced in \cite{BCD}). The functor 
$\Lambda_X(T,M)$ is defined as a certain homotopy colimit and consequently does not take values in strictly commutative $S$-algebras when $M$ is a commutative $T$-algebra. Fortunately, this is not a serious limitation since in this case it naturally takes values in $E_{\infty}$ $S$-algebras instead. We present a general framework for analyzing multiplicative properties of homotopy colimits in  
Appendix \ref{appendix}. The main result here is that in a precise sense homotopy colimits take $E_{\infty}$ diagrams to $E_{\infty}$ 
$S$-algebras. 

\subsection*{Organization of the paper}
We begin by introducing the Loday construction $\mathcal L_X(T,M)$ and the homotopy invariant version $\Lambda_X(T,M)$ in Section \ref{highersection}. Here we also discuss the multiplicative properties of $\Lambda_X(T,M)$ when $M$ is a commutative $S$-algebra. In Section 
\ref{Isymmetricthomsection} we collect the necessary background material on $\I$-spaces and symmetric Thom spectra. Following this, in Section \ref{highthomsection}, we then prove the main results of the paper. In Appendix \ref{appendix} we discuss the multiplicative properties of homotopy colimits in general.   

\subsection*{Acknowledgments} The author would like to thank Morten Brun, Bj\o rn Dundas, Birgit Richter, and John Rognes for helpful conversations related to this work.

\subsection{Notation and conventions}\label{conventions}
Let $\mathcal U$ and $\mathcal T$ be the categories of unbased and based compactly generated weak Hausdorff spaces. We shall work in the category $\Sp^{\Sigma}$ of topological symmetric spectra introduced in \cite{MMSS}; see also \cite{HSS} for the simplicial version and \cite{Schwede}
 for a thorough introduction to symmetric spectra. Let $S$ be the sphere spectrum such that an \emph{$S$-module} is the same thing as a symmetric spectrum and an 
\emph{$S$-algebra} is a symmetric ring spectrum. There are several model structures on 
$\Sp^{\Sigma}$ but for our purposes it will be most convenient to work with the \emph{flat} model structure from \cite{Sh2} (here we use the terminology from \cite{Schwede}; this is what is called the $S$-model structure in \cite{Sh2}). This model structure gives rise to accompanying model structures on the categories of $S$-algebras, commutative $S$-algebras, and, for an $S$-algebra $T$, the category of $T$-modules. The facts about the flat module structure that are important for us is that (i) the underlying $S$-module of a cofibrant commutative $S$-algebra is cofibrant, see \cite{Sh2}, Corollary 4.3, and (ii) if $T$ is a cofibrant commutative $S$-algebra, then the underlying $S$-module of a cofibrant $T$-module is cofibrant. Since smash products with cofibrant $S$-modules preserve stable equivalences this ensures that the Loday construction 
$\mathcal L(T,M)$ is a homotopy functor in $T$ and $M$ when restricted to cofibrant objects, see Section \ref{LodayS}. 

An $S$-module $M$ is said to be \emph{connective} if its spectrum homotopy groups $\pi_*M$ vanish in negative degrees and \emph{convergent} if there exists an unbounded, non-decreasing sequence of integers $\lambda_n$ such that each of the adjoint structure maps $M_n\to \Omega M_{n+1}$ is ($\lambda_n+n$)-connected. It will sometimes be necessary to assume that an $S$-module is \emph{well-based} in the sense that each of its spaces has a non-degenerate base point.  By an $E_{\infty}$ object in $\mathcal U$ or $\Sp^{\Sigma}$ we understand an object with an action of the Barratt-Eccles operad $\mathcal E$, see Section \ref{hocolimEinfty}. There are standard change of operad techniques, \cite{BCS}, \cite{May}, for comparing actions by different operads, but it is the Barratt-Eccles operad that occurs most naturally in our setting.

\subsection{Homotopy colimits of $S$-modules}\label{hocolimsection}
Homotopy colimits of $S$-modules are defined in analogy with homotopy colimits of based spaces, see \cite{BK}. Thus, if $\mathcal A$ is a small category and $F\co \mathcal A\to \Sp^{\Sigma}$ a functor, then $\hocolim_{\mathcal A}F$ is the realization of the simplicial
$S$-module
\begin{equation}\label{hocolimdefinition}
[k]\mapsto \bigvee_{a_0\leftarrow\dots\leftarrow a_k}F(a_k),
\end{equation}
where the wedge product is over all $k$-tuples of composable arrows in $\mathcal A$. Since realization of simplicial $S$-modules is performed levelwise, this is the same thing as the 
levelwise homotopy colimit of $F$. The functorial properties of the homotopy colimit is best understood by introducing the category $\mathcal D(\Sp^{\Sigma})$ of diagrams in 
$\Sp^{\Sigma}$. An object of $\mathcal D(\Sp^{\Sigma})$ is a pair $(\mathcal A,F)$, where 
$\mathcal A$ is a small category and $F\co \mathcal A\to \Sp^{\Sigma}$ is a functor as above. A morphism from $(\mathcal A,F)$ to $(\mathcal A',F')$ is a pair 
$(\varepsilon,\phi)$ whose first component is a functor $\varepsilon\co \mathcal A\to \mathcal A'$ and whose second component is a natural transformation $\phi\co F\to F'\circ \varepsilon$. The homotopy colimit defines a functor $\mathcal D(\Sp^{\Sigma})\to \Sp^{\Sigma}$ in which a morphism $(\varepsilon,\phi)$ as above gives rise to the map
$$
\hocolim_{\mathcal A}F\xr{\phi}\hocolim_{\mathcal A}F'\circ\varepsilon\to 
\hocolim_{\mathcal A'}F'.
$$
Here the last map is defined by applying $\varepsilon$ to the indexing category. 
For any category $\mathcal C$, the data defining a functor 
$\mathcal C\to \mathcal D(\Sp^{\Sigma})$ amounts to 
\begin{itemize}
\item
a functor $\Phi\co\mathcal C\to\Cat$ from the category $\mathcal C$ to the category $\Cat$ of small categories,
\item
for each object $c$ in $\mathcal C$ a functor $F_c\co\Phi(c)\to\Sp^{\Sigma}$,
\item
for each morphism $f\co c\to d$ in $\mathcal C$ a natural transformation
$$
\phi_f\co F_c\to F_d\circ\Phi(f).
$$
\end{itemize}
It is required that $\phi_{1_{\mathcal C}}$ be the identity natural transformation on $F_c$ and that for each composable pair of morphisms $f\co c\to d$ and $g\co d\to e$, the diagram
$$
\begin{CD}
F_c@>\phi_f>> F_d\circ\Phi(f)\\
@VV \phi_{g\circ f} V @VV \phi_g\circ \Phi(f) V\\
F_e\circ \Phi(g\circ f)@= F_e\circ \Phi(g)\circ\Phi(f)
\end{CD}
$$
is commutative. 

\section{Higher topological Hochschild homology}\label{highersection}
In this section we first recall the definition of the Loday functor and we then construct the homotopy invariant model $\Lambda_X(T,M)$.

\subsection{The Loday construction for commutative $S$-algebras}\label{LodayS}
For a (discrete) commutative ring $T$ and a $T$-module $M$, the Loday construction
$\mathcal L(T,M)$ is the functor that to a finite based set $X$ with base point $*$ associates the $T$-module $\bigotimes _{x\in X}T_x$, where $T_x=T$ for $x\neq *$ and $T_*=M$. A map of finite based sets $f\co X\to Y$ gives rise to a map of $T$-modules defined by
$$
\bigotimes _{x\in X}T_x\xr{\sim} \bigotimes_{y\in Y}
\,
\bigotimes_{x\in f^{-1}(y)}T_x\to 
\bigotimes_{y\in Y}T_y,
$$
where the first map permutes the tensor factors and the second map is the tensor product of the maps  $\bigotimes_{f^{-1}(y)}T_x\to T_y$ induced by the multiplication in $T$ and, for $y=*$, the $T$-module structure on $M$. A similar functor can be defined for a commutative monoid in any symmetric monoidal category; in particular in the category of $S$-modules. Thus, for a commutative $S$-algebra $T$ and a $T$-module $M$, we have the functor $\mathcal L(T,M)$ that to a finite based set $X$ associates the smash product $\bigwedge_{x\in X}T_x$, where we again use the notation $T_x=T$ for $x\neq *$ and $T_*=M$. Applying the usual levelwise realization of simplicial $T$-modules, this construction extends to a functor 
$$
\mathcal L(T,M)\co \Simp_*\to \text{$T$-$\mod$}
$$ 
from the category $\Simp_*$ of based simplicial sets to the category $T$-$\mod$ of $T$-modules. In detail, given a (not necessarily finite) based set $X$, we let $\mathcal L_X(T,M)$ be the colimit
$$
\mathcal L_X(T,M)=\colim_{Y\subseteq X}\mathcal L_Y(T,M)
$$ 
over the finite based subsets of $X$ and for a based simplicial set, again denoted $X$, we define 
$\mathcal L_X(T,M)$ to be the realization of the simplicial $T$-module obtained by applying this construction in each simplicial degree. 

Depending on $M$, the construction $\mathcal L(T,M)$ has added functoriality. Suppose first that $M$ is a commutative $T$-algebra. Then $\mathcal L_X(T,M)$ inherits the structure of a commutative $S$-algebra and the canonical map $T\to\mathcal L_X(T,M)$ makes it a $T$-algebra. In the case where $M$ is the $S$-algebra $T$ itself, the functor $\mathcal L(T,T)$ extends to the category 
$\Simp$ of (unbased) simplicial sets in the sense that there is a functor $\mathcal L(T)$ that makes the diagram
$$
\begin{CD}
\Simp_*@>\mathcal L(T,T)>> \text{$T$-$\comm$}\\
@VVV @VVV\\
\Simp@>\mathcal L(T)>>\text{$S$-$\comm$} 
\end{CD}
$$  
commutative. Here $S$-$\comm$ and $T$-$\comm$ denote the categories of commutative 
$S$- and $T$-algebras respectively, and the vertical arrows represent the obvious forgetful functors.

\begin{remark}\label{tensorremark}
The functor $\mathcal L(T)$ can be interpreted in terms of the tensor structure on the category of commutative $S$-algebras with respect to (unbased) simplicial sets: given an (unbased) simplicial set $X$, the commutative $S$-algebra $\mathcal L_X(T)$ is isomorphic to the tensor product 
$X\otimes T$. For a based simplicial set $X$ we then have the natural isomorphism
\[
\mathcal L_X(T,M)\cong M\wedge_T \mathcal L_X(T,T)
\cong M\wedge_T(X\otimes T)
\]  
for any $T$-module $M$. We refer the reader to \cite{Bo} for a general introduction to enriched category theory (which we shall not use).
\end{remark}

 \subsection{B\"okstedt's construction}\label{Bsection}
 Whereas the Loday construction $\mathcal L_X(T,M)$ is homotopy invariant only when the input variables $T$ and $M$ are cofibrant, we aim to construct a model which is homotopy invariant for all $T$ and $M$ as long as these are well-based. We first recall  B\"okstedt's homotopy invariant model of $\mathcal L_X(T,M)$ when $X$ is a finite based set. Let 
 $\I$ be the category whose objects are the finite sets $\mathbf n=\{1,\dots,n\}$, including the empty set $\mathbf 0$, and whose morphisms are the injective maps. The concatenation  
 $\mathbf m\sqcup\mathbf n$ of two objects in $\I$ is defined to be the object $\{1,\dots,m+n\}$ with $\mathbf m$ corresponding to the first $m$ letters and $\mathbf n$ corresponding to the last $n$ letters. This operation makes $\I$ a symmetric monoidal category. For a finite set $X$, not necessarily based, we write $\I(X)$ for the product category $\I^X$ and think of the objects as functors $j\co X\to\I$ from the discrete category $X$ to $\I$. Given based spaces $A$ and $B$, let $F_S(A,B)$ be the $S$-module with $n$th space
$$
 F_S(A,B)(n)=\Map_*(A,B\wedge S^n).
$$
Thus, $F_S(A,B)$ may be identified with the symmetric spectrum of  
$S$-module maps from the suspension spectrum of $A$ to the suspension spectrum of $B$.
Suppose now that $X$ is a finite set with a specified base point and let $j$ be an object in 
$\I(X)$. Then we define $F_X(T,M)(j)$ to be the $S$-module   
$$
F_X(T,M)(j)=F_S(\bigwedge_{x\in X}S(j_x),\bigwedge_{x\in X}T_x(j_x))
$$ 
where the notation $T_x$ is as in the definition of the Loday construction. 
Using the structure maps of the $S$-modules $T$ and $M$ this construction defines a functor 
$F_X(T,M)\co \I(X)\to \Sp^{\Sigma}$. Notice, that if $T$ and $M$ are well-based, then the values of the $\I(X)$-diagram $F_X(T,M)$ are also well-based. Using this, the following homotopy invariance result can be deduced from the simplicial analogue proved in \cite{Sh}, 
Corollary 4.2.9.

\begin{proposition}\label{Binvariant}
 Let $T\to T'$ be a stable equivalence of commutative $S$-algebras, $M$ a $T$-module, $M'$ a $T'$-module, and $M\to M'$ a stable equivalence of $T$-modules. Suppose further that all these $S$-modules are well-based. Then the induced map of homotopy colimits
$$
\hocolim_{\I(X)}F_X(T,M)\to \hocolim_{\I(X)}F_X(T',M') 
$$
 is a stable equivalence for any finite based set $X$. \qed  
\end{proposition}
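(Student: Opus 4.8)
The plan is to deduce this from its simplicial counterpart, \cite{Sh}, Corollary 4.2.9, via the standard comparison between topological and simplicial symmetric spectra. Two preliminary observations make the reduction routine. First, the $\I(X)$-diagram $F_X(T,M)$ depends on $T$ and $M$ only through their underlying symmetric spectra: its structure maps over $\I(X)$ are built from the symmetric spectrum structure maps of $T$ and $M$ alone, not from the multiplication or the module action. In particular $F_X(T',M)$ makes sense as an $\I(X)$-diagram even though $M$ need not be a $T'$-module, and I would factor the map in question as
$$
\hocolim_{\I(X)}F_X(T,M)\lra\hocolim_{\I(X)}F_X(T',M)\lra\hocolim_{\I(X)}F_X(T',M'),
$$
so as to change $T$ and $M$ one at a time (and, if the cited result is phrased for a single coefficient, one smash factor at a time). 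Second, as recorded just before the proposition, when $T$ and $M$ are well-based the values $F_X(T,M)(j)$ are well-based, so the simplicial $S$-module \eqref{hocolimdefinition} defining $\hocolim_{\I(X)}F_X(T,M)$ is proper and has the correct homotopy type.

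Next I would pass to the simplicial setting by replacing $T$ with $|\mathrm{Sing}\,T|$, and similarly for $M$, $T'$, $M'$. This changes nothing up to a chain of levelwise weak equivalences of well-based $S$-modules; since $F_X(-,-)$ is assembled from based mapping spaces out of finite simplicial spheres and from finite smash products of well-based spaces, these equivalences survive $F_X$ levelwise and, by the properness noted above, induce stable equivalences on $\hocolim_{\I(X)}F_X(-,-)$. After this replacement the input symmetric spectra are realizations of simplicial ones, and since geometric realization is a left adjoint that commutes with the finite smash products involved, with $F_S$ applied to finite simplicial sets, and with the levelwise homotopy colimit, the construction $\hocolim_{\I(X)}F_X(-,-)$ is identified with the realization of the corresponding simplicial B\"okstedt construction applied to $\mathrm{Sing}\,T$ and $\mathrm{Sing}\,M$.

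It then remains to observe that $\mathrm{Sing}$ both preserves and detects stable equivalences, so that $\mathrm{Sing}\,T\to\mathrm{Sing}\,T'$ and $\mathrm{Sing}\,M\to\mathrm{Sing}\,M'$ are stable equivalences of simplicial symmetric spectra whose associated $\I(X)$-diagrams have well-based values, and to apply \cite{Sh}, Corollary 4.2.9, to each map in the factorization above. This shows that each such map is a stable equivalence after passage to the simplicial model, hence --- by the identification of the second paragraph --- already before it, and the proposition follows by composing the two maps.

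The genuine homotopy-theoretic content --- B\"okstedt's approximation lemma, which is built into the behaviour of $\hocolim_{\I}$ --- is entirely contained in \cite{Sh}, Corollary 4.2.9. The only real work in the present argument is thus the simplicial-to-topological comparison of $\hocolim_{\I(X)}F_X$ in the second paragraph, and this is precisely the step that uses the well-basedness hypothesis: it guarantees that the point-set construction has the correct homotopy type and is compatible with realization. Granting that, everything else is formal.
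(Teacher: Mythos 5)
Your proposal is correct and follows essentially the same route as the paper, which offers no written proof beyond the remark that the statement "can be deduced from the simplicial analogue proved in \cite{Sh}, Corollary 4.2.9" using the well-basedness of the values of $F_X(T,M)$. Your sketch simply fills in the details of that deduction (the factorization changing one input at a time, and the $\mathrm{Sing}$/realization comparison), and the points you identify as requiring care are the right ones.
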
 
 
Given an $S$-module $M$, we write $F_*(M)$ for the $\I$-diagram $F_*(S,M)$ associated to the one-point set $*$. The homotopy colimit of this diagram gives rise to the ``detection functor'' $M\mapsto DM$ from \cite{Sh} and it follows from the proof of \cite{Sh}, Proposition 3.1.9 (3), that if $M$ is well-based, then there is a chain of stable equivalences
\begin{equation}\label{shipleyequi}
DM=\hocolim_{\I}F_*(M)\simeq M.
\end{equation}
This applies in particular to the well-based $S$-module $\mathcal L_X(T,M)$ associated to a cofibrant commutative $S$-algebra $T$ and a cofibrant $T$-module $M$. If we choose an ordering of $X$ there also is a natural map 
\begin{equation}\label{shipleyequi2}
\hocolim_{\I(X)}F_X(T,M)\to \hocolim_{\I}F_*(\mathcal L_X(T,M))
\end{equation}
induced by the concatenation functor $\I(X)\to \I$. It again follows from the simplicial analogue \cite{Sh}, Proposition 4.2.3, that this is a stable equivalence if $T$ and $M$ are cofibrant. Summarizing the above discussion we have the following. 

\begin{theorem}[\cite{Sh}]
Let $X$ be a finite based set with a chosen ordering. If $T$ is a cofibrant commutative $S$-algebra and $M$ is a cofibrant $T$-module, then there is a chain of natural stable equivalences
$$
\mathcal L_X(T,M)\simeq\hocolim_{\I(X)}F_X(T,M). 
\eqno\qed
$$
\end{theorem}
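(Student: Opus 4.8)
The plan is to obtain the asserted chain by splicing together the two stable equivalences already recorded above, using $\hocolim_{\I}F_*(\mathcal L_X(T,M))$ --- the homotopy colimit of the detection-functor diagram attached to the $S$-module $\mathcal L_X(T,M)$ --- as the intermediate term. Explicitly, I would string together
$$
\hocolim_{\I(X)}F_X(T,M)\longrightarrow\hocolim_{\I}F_*(\mathcal L_X(T,M))\simeq\mathcal L_X(T,M),
$$
where the first arrow is the map (\ref{shipleyequi2}) induced by the concatenation functor $\I(X)\to\I$ attached to the chosen ordering of $X$, and the chain of equivalences on the right is (\ref{shipleyequi}) specialized from a general well-based $S$-module to $\mathcal L_X(T,M)$. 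So the work consists entirely in checking that the standing hypotheses of those two inputs are satisfied here.

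First I would note that $\mathcal L_X(T,M)$ is well-based when $T$ is a cofibrant commutative $S$-algebra and $M$ a cofibrant $T$-module --- this is exactly the remark following (\ref{shipleyequi}) --- so that (\ref{shipleyequi}) applies to it. The mechanism behind that remark is that, by the two properties of the flat model structure recalled in Section~\ref{conventions}, the underlying $S$-modules of $T$ and of $M$ are cofibrant, hence well-based, and well-basedness passes through the smash products $\bigwedge_{x\in Y}T_x$, the filtered colimit over finite based subsets $Y\subseteq X$, and the levelwise geometric realization that together assemble $\mathcal L_X(T,M)$ out of $T$ and $M$.

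Next I would invoke Shipley's two results in the form quoted above: (\ref{shipleyequi}) is a natural chain of stable equivalences for every well-based $S$-module, being the topological incarnation of the proof of \cite{Sh}, Proposition~3.1.9~(3); and (\ref{shipleyequi2}) is a stable equivalence once $T$ and $M$ are cofibrant, being the topological incarnation of \cite{Sh}, Proposition~4.2.3. Composing the arrow (\ref{shipleyequi2}) with the chain (\ref{shipleyequi}) then produces the required chain of stable equivalences. Naturality comes essentially for free: (\ref{shipleyequi}) and (\ref{shipleyequi2}) are natural in their $S$-module, respectively algebra, arguments, and the concatenation functor $\I(X)\to\I$ is natural for the maps of $X$ that preserve the chosen orderings.

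Since all of the genuine homotopy-theoretic content is imported from \cite{Sh}, I do not expect a real obstacle here; the only point that needs some care is the well-basedness verification of the second paragraph, and --- if one wants the equivalence to be functorial in $X$ rather than merely in the pair $(T,M)$ --- the bookkeeping that makes the concatenation functor $\I(X)\to\I$, and hence the map (\ref{shipleyequi2}), compatible with the chosen orderings. This last point is handled by the same permutation-of-smash-factors argument that is already built into the definition of the Loday construction.
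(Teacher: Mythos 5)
Your proposal is correct and is essentially identical to the paper's argument: the theorem is stated as a summary of the preceding discussion, obtained exactly by splicing the concatenation-induced map (\ref{shipleyequi2}) (a stable equivalence for cofibrant $T$ and $M$ by the analogue of \cite{Sh}, Proposition 4.2.3) with the detection-functor equivalence (\ref{shipleyequi}) applied to the well-based $S$-module $\mathcal L_X(T,M)$. The only superfluous element in your write-up is the appeal to filtered colimits and realization in the well-basedness check, since $X$ is a finite based set here and $\mathcal L_X(T,M)$ is just a finite smash product of cofibrant, hence well-based, $S$-modules.
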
 

Combining this result with Proposition \ref{Binvariant}, it follows that if $T$ and $M$ are well-based, then  the homotopy colimit of $F_X(T,M)$ has the stable homotopy type of the derived Loday construction $\mathcal L_X(T^c,M^c)$, where $T^c$ is a cofibrant replacement of $T$ and $M^c$ is a cofibrant replacement of $M$ as a $T^c$-module. 

\begin{remark}\label{Bokstedtremark}
B\"okstedt \cite{Bok} originally defined topological Hochschild homology by applying the above construction degree-wise to the simplicial circle. However, the fact that this gives a simplicial spectrum is special to the circle and this construction does not work for simplicial sets in general. The problem is that the categories $\I(X)$ are not functorial in $X$ which in turn is due to the fact that the symmetric monoidal structure of $\I$ is not strictly symmetric. 
\end{remark}

\subsection{The homotopy invariant model}\label{invariantsection}
As discussed in Remark \ref{Bokstedtremark}, the main problem with B\"okstedt's construction is the lack of functoriality in $X$. In order to remedy this we first replace the categories $\I(X)$ by equivalent categories $\I\langle X\rangle$ which have the advantage of being functorial in the (unbased) set $X$. Let $\mathcal P(X)$ be the category of subsets and inclusions in $X$. The objects of $\I\langle X\rangle$ are the functors 
$\theta\co \mathcal P(X)\to\I$ which satisfy the condition that for each pair of disjoint subsets 
$U$ and $V$, the diagram
$
\theta_U\to \theta_{U\cup V}\leftarrow \theta_V
$ 
is a coproduct diagram of finite sets (the category $\I$ itself does of course not have coproducts). Notice that such a functor takes the empty set  
to the initial object $\mathbf 0$. 
We view $\I\langle X\rangle$ as a full subcategory of the category of functors from 
$\mathcal P(X)$ to $\I$. 
Given a point $x\in X$, let $\theta_x$ be the value of the functor $\theta$ evaluated at 
$\{x\}$. For each ordering of a subset $U$ in $X$ we then get a canonical bijection
$
\textstyle{\coprod}_{x\in U}\theta_x\to \theta_U.
$
A map of sets $f\co X\to Y$ induces a functor $f_*\co \I\langle X\rangle \to \I\langle Y\rangle$ by letting $f_*\theta_U=\theta_{f^{-1}(U)}$ for each subset $U$ of $Y$. 
In this way we have defined a functor 
$$
\I\langle-\rangle\co \Fin\to\Cat,\quad X\mapsto \I\langle X\rangle
$$
from the category $\Fin$ of finite sets to the category of small categories. Consider the forgetful functor $\pi_X\co\I\langle X\rangle\to \I(X)$ that restricts an object $\theta$ to the one-point subsets in $X$. Arguing as in the analogous situation considered in \cite{Sch1}, Lemma 5.1, we get the following lemma. 
\begin{lemma}
Let $F\co \I(X)\to \Sp^{\Sigma}$ be a functor with values in well-based $S$-modules. Then the natural map
$$
\hocolim_{\I\langle X\rangle}F\circ\pi_X\to \hocolim_{\I(X)}F
$$
is a levelwise weak equivalence of $S$-modules. \qed
\end{lemma}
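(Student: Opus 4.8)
The plan is to reduce the statement to the known comparison result \cite{Sch1}, Lemma~5.1, by exhibiting $\pi_X\co\I\langle X\rangle\to\I(X)$ as a functor to which that result applies. First I would recall the structure of the argument in the cited lemma: one shows that the forgetful functor in question is \emph{homotopy cofinal} (or more precisely that the relevant comma categories are contractible), so that restriction along it does not change the homotopy colimit of any diagram of well-based $S$-modules. Since homotopy colimits of $S$-modules are computed levelwise (as noted in Section~\ref{hocolimsection}), it suffices to prove the corresponding statement for diagrams of well-based spaces, and there the standard cofinality criterion for homotopy colimits of spaces (Bousfield--Kan \cite{BK}) applies verbatim.

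Next I would verify the cofinality hypothesis for $\pi_X$. Fix an object $j\in\I(X)$, i.e.\ a function $x\mapsto j_x$ assigning a finite set $\mathbf{n_x}$ to each $x\in X$. One must show that the comma category $\pi_X/j$ (objects: pairs $(\theta,\,\alpha\co \pi_X\theta\to j)$ in $\I(X)$) has contractible nerve. The key point is that $\I\langle X\rangle$ is, up to equivalence, the category of ``coherent choices of concatenation'' of the $\theta_x$, and the restriction $\pi_X$ is an equivalence onto its essential image after one chooses an ordering of $X$; more canonically, $\pi_X/j$ is filtered-like because given any two objects over $j$ one can concatenate further without obstruction, and the morphisms in $\I$ being injections means all the relevant diagrams of finite sets admit the required coproduct extensions. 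This is exactly the bookkeeping carried out in \cite{Sch1}, Lemma~5.1 for the analogous category there, so I would simply indicate that the same proof goes through with $\mathcal P(X)$-diagrams in place of whatever indexing poset appears in \cite{Sch1}, and that the well-basedness hypothesis on $F$ is used precisely to ensure the levelwise homotopy colimits are invariant under cofinal functors (so that one gets a levelwise weak equivalence, not merely a stable equivalence).

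The main obstacle I expect is not the cofinality computation itself but making precise the claim that $\pi_X/j$ is contractible when $X$ is not equipped with an ordering: a priori $\I\langle X\rangle$ is genuinely larger than $\I(X)$ and one must check that the extra data (values on \emph{all} subsets of $X$, not just singletons, together with the coproduct-diagram condition) is ``homotopically free.'' The way around this is to observe that for a fixed singleton assignment $j$, the space of extensions $\theta$ with $\pi_X\theta = j$ together with their morphisms is the nerve of a category that is equivalent to a product, over orderings (or over a contractible space of orderings of $X$), of categories of the form $\I/j_x$, each of which is directed hence contractible; alternatively one exhibits an explicit natural transformation / deformation retraction as in \cite{Sch1}. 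I would spell this out only to the extent of pointing to the ordering trick: choosing an ordering of $X$ gives a section of $\pi_X$ and a natural transformation from $\mathrm{id}$ to the composite, exhibiting $\pi_X$ as a homotopy equivalence of categories up to the cofinality needed, and then invoking that the result is independent of the ordering because any two orderings are related by a zig-zag in the contractible category of orderings. Once cofinality is in hand, the levelwise weak equivalence of $S$-modules follows immediately from the spacewise statement, completing the proof.
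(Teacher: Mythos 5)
Your overall strategy is the right one, and it is in fact all the paper itself offers: the lemma is stated with a pointer to the analogous argument in \cite{Sch1}, Lemma~5.1, so fleshing out a cofinality argument is exactly what is intended. Two comments. First, you have the variance of the comma category backwards. For the map $\hocolim_{\I\langle X\rangle}F\circ\pi_X\to\hocolim_{\I(X)}F$ to be an equivalence by the standard cofinality criterion for homotopy \emph{colimits}, one needs the under-categories $j\downarrow\pi_X$, with objects $(\theta,\,\alpha\co j\to\pi_X\theta)$, to have contractible nerve; contractibility of the over-categories $\pi_X\downarrow j$ that you describe is the criterion relevant for homotopy limits (or for Quillen's Theorem~A), and would not by itself give the conclusion. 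The claim happens to be true for both comma categories here, but for the reason in the next point rather than by any filteredness argument.

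Second, the ``ordering trick'' you offer as a fallback is actually the cleanest complete proof, and it renders both the cofinality bookkeeping and your worry about independence of the ordering unnecessary. Choosing an ordering of $X$ defines a section $s\co\I(X)\to\I\langle X\rangle$ of $\pi_X$ by $s(j)_U=\coprod_{x\in U}j_x$ (concatenated in the chosen order), and the coproduct condition on objects of $\I\langle X\rangle$ provides a \emph{canonical natural isomorphism} $s\circ\pi_X\cong\mathrm{id}$ (each $\coprod_{x\in U}\theta_x\to\theta_U$ is a bijection, compatible with inclusions $U\subseteq V$ and with morphisms of $\theta$). Hence $\pi_X$ is an equivalence of categories, so every comma category $j\downarrow\pi_X$ has an initial object and is contractible. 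There is no independence-of-choices issue to resolve: the comparison map of homotopy colimits is the canonical one induced by $\pi_X$ and does not involve the ordering at all; the ordering is only used to verify contractibility, and one verification suffices. Finally, you correctly identify where well-basedness enters: the levelwise homotopy colimit is a realization of a simplicial bar construction, and objectwise non-degenerate base points are needed for it to be invariant under (co)final functors, so that one obtains a levelwise weak equivalence.
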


If $X$ comes equipped  with a specified base point, this lemma
in particular applies to the functor $F_X(T,M)$ associated to well-based $T$ and $M$. We claim that the correspondence 
$$
\Fin_*\to \mathcal D(\Sp^{\Sigma}),\quad X\mapsto (\I\langle X\rangle, F_X(T,M)\circ\pi_X)
$$
defines a functor from the category $\Fin_*$ of finite based sets to the category of diagrams in 
$\Sp^{\Sigma}$. Thus, for each based  map $f\co X\to Y$ we must exhibit a natural transformation
\begin{equation}\label{phitransformation}
\phi_f\co F_X(T,M)\circ\pi_X\to F_Y(T,M)\circ\pi_Y\circ f_*
\end{equation}
satisfying the conditions spelled out in Section \ref{hocolimsection}. 
The idea is that given an object $\theta$ in $\I\langle X\rangle$, the associated object $f_*\theta$ contains the data specifying how to multiply the smash factors indexed by $X$. Choosing an ordering of each of the subsets 
$f^{-1}(y)$, we define $d_f(\theta)$ to be the composition
$$
\bigwedge_{y\in Y}S(\theta_{f^{-1}(y)})\to\bigwedge_{y\in Y}S({\textstyle\coprod}_{x\in f^{-1}(y)}\theta_x)\to \bigwedge _{y\in Y}\bigwedge_{x\in f^{-1}(y)}S(\theta_x)\to\bigwedge_{x\in X}S(\theta_x)
$$ 
and $t_f(\theta)$ to be the composition
$$
\bigwedge_{x\in X}T_x(\theta_x)\to\bigwedge_{y\in Y}\bigwedge_{x\in f^{-1}(y)}T_x(\theta_x)\to\bigwedge_{y\in Y}T_y({\textstyle\coprod}_{x\in f^{-1}(y)}\theta_x)\to\bigwedge_{y\in Y}
T_y(\theta_{f^{-1}(y)}).
$$
Here the definition of the maps is supposed to be self-explanatory. The main point is that since $S$ and $T$ are commutative these maps do not depend on the orderings used to define them. For each $\theta$ we define the map
$$
\phi_f\co F_S(\bigwedge_{x\in X}S(\theta_x),\bigwedge_{x\in X}T_x(\theta_x))\to
 F_S(\bigwedge_{y\in Y}S(\theta_{f^{-1}(y)}),\bigwedge_{y\in Y}T_y(\theta_{f^{-1}(y)})),
$$
by mapping an element $u$ in spectrum degree $n$ to the element 
$$
(t_f(\theta)\wedge S^n)\circ u\circ d_f(\theta)\co \bigwedge_{y\in Y}S(\theta_{f^{-1}(y)})
\to \bigwedge_{y\in Y}T_y(\theta_{f^{-1}(y)})\wedge S^n.
$$
It is straight forward to check the required relations. We define $\Lambda'(T,M)$ to be the composite functor obtained by evaluating the homotopy colimit,   
$$
\Lambda'(T,M)\co \Fin_*\to\Sp^{\Sigma},\quad
\Lambda'_X(T,M)=\hocolim_{\I\langle X\rangle}F_X(T,M)\circ\pi_X.
$$
For a general based set $X$ let 
$$
\Lambda_X(T,M)=\hocolim_{Y\subseteq X}\Lambda'_Y(T,M)
$$
where the homotopy colimit is over the finite based subsets $Y$ in $X$. When $X$ is finite and $T$ and $M$ well-based we then have a chain of levelwise equivalences
$$
\Lambda_X(T,M)\xr{\sim}\Lambda'_X(T,M)\xr{\sim}\hocolim_{\I(X)}F_X(T,M).
$$ 
Finally, we extend this construction levelwise to a functor
$$
\Lambda(T,M)\co\Simp_*\to \Sp^{\Sigma}, 
$$
which as usual means that for a based simplicial set $X$, the $S$-module $\Lambda_X(T,M)$ is the realization of the simplicial $S$-module $[k]\mapsto \Lambda_{X_k}(T,M)$.  

In the case $T=M$ we also have an unbased version of this construction: write $\Lambda'(T)$ for the extension of $\Lambda'(T,T)$ to a functor on the category $\Fin$ of (unbased) finite sets such that there is a commutative diagram
$$
\begin{CD}
\Fin_*@>\Lambda'(T,T)>> \Sp^{\Sigma}\\
@VVV @|\\
\Fin@>\Lambda'(T)>>\Sp^{\Sigma}.
\end{CD}
$$ 
For a general set $X$ we define $\Lambda_X(T)$ to be the homotopy colimit
$$
\Lambda_X(T)=\hocolim_{Y\subseteq X}\Lambda_Y'(T)
$$
over the finite subsets $Y$ of $X$ and we again extend this levelwise to a functor
$
\Lambda(T)\co \Simp\to \Sp^{\Sigma}. 
$
Notice, that if $X$ has a specified base point, then there is a canonical levelwise equivalence
$\Lambda_X(T,T)\xr{\sim}\Lambda_X(T)$. Our consistent use of homotopy colimits implies that the homotopy invariance property of B\"okstedt's construction stated in Proposition 
\ref{Binvariant} carries over to the functors  
$\Lambda_X(T,M)$ and $\Lambda_X(T)$. 

\begin{proposition}\label{invarianceprop}
Applied to well-based commutative $S$-algebras $T$ and well-based $T$-modules $M$, the functors $\Lambda_X(T)$ and $\Lambda_X(T,M)$ take stable equivalences in $T$ and $M$ to stable equivalences of $S$-modules.  \qed
\end{proposition}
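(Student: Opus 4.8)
The plan is to reduce the assertion to the homotopy invariance of B\"okstedt's construction recorded in Proposition~\ref{Binvariant}. By construction, both $\Lambda_X(T,M)$ and $\Lambda_X(T)$ are obtained from the functors $F_Y(T,M)\circ\pi_Y$, with $Y$ ranging over the finite based subsets of the simplicial set $X$ in each simplicial degree, by three layers of homotopy-colimit type operations: the homotopy colimit over $\I\langle Y\rangle$, the homotopy colimit over the poset of finite subsets $Y\subseteq X_k$, and the geometric realization in the simplicial direction. Each of these preserves objectwise (respectively levelwise) stable equivalences between diagrams (respectively simplicial objects) of well-based $S$-modules --- this is exactly what makes the relevant bar constructions proper, so that they compute genuine homotopy colimits. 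The hypothesis that $T$ and $M$ be well-based ensures, as recorded in Section~\ref{Bsection}, that $F_Y(T,M)$ takes well-based values, and well-basedness is inherited by wedges and by realizations of proper simplicial objects, so every intermediate $S$-module occurring below is again well-based.

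First I would dispose of the case of a finite based set $X$. Given a stable equivalence $T\to T'$ of well-based commutative $S$-algebras together with a stable equivalence $M\to M'$ from a well-based $T$-module to a well-based $T'$-module over it, I would contemplate the commutative square
$$
\begin{CD}
\hocolim_{\I\langle X\rangle}F_X(T,M)\circ\pi_X @>>> \hocolim_{\I(X)}F_X(T,M)\\
@VVV @VVV\\
\hocolim_{\I\langle X\rangle}F_X(T',M')\circ\pi_X @>>> \hocolim_{\I(X)}F_X(T',M')
\end{CD}
$$
whose horizontal arrows are the natural maps supplied by the comparison lemma above and whose vertical arrows are induced by $(T,M)\to(T',M')$. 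The horizontal arrows are levelwise weak equivalences, hence stable equivalences, by that lemma, and the right-hand vertical arrow is a stable equivalence by Proposition~\ref{Binvariant}; so the left-hand vertical arrow $\Lambda'_X(T,M)\to\Lambda'_X(T',M')$ is a stable equivalence by the two-out-of-three property. Specializing to $M=T$ --- and, in the unbased setting, using the square that defines $\Lambda'(T)$ from $\Lambda'(T,T)$ together with the evident unbased analogues of the comparison lemma and of Proposition~\ref{Binvariant}, which hold by the same proofs since no base point is singled out when $M=T$ --- gives the corresponding statement for $\Lambda'_X(T)$.

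It then remains to propagate this through the two remaining layers. For a general based set $X$, a pair of stable equivalences as above induces an objectwise stable equivalence of the diagrams $Y\mapsto\Lambda'_Y(T,M)$ over the poset of finite based subsets of $X$; since the values are well-based, the induced map on homotopy colimits $\Lambda_X(T,M)\to\Lambda_X(T',M')$ is again a stable equivalence, and likewise for $\Lambda_X(T)$. Passing to a based simplicial set $X$, we thus obtain a levelwise stable equivalence of the simplicial $S$-modules $[k]\mapsto\Lambda_{X_k}(T,M)$, whose geometric realization $\Lambda_X(T,M)\to\Lambda_X(T',M')$ is then a stable equivalence; the case of $\Lambda_X(T)$ follows identically, or from the canonical levelwise equivalence $\Lambda_X(T,T)\xr{\sim}\Lambda_X(T)$ available when $X$ carries a base point.

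The only real content --- and the step where I would expect to spend the most care --- is the bookkeeping behind the claim that \emph{stable}, not merely levelwise, equivalences survive each of these operations. Finite wedges and finite homotopy colimits preserve stable equivalences for trivial reasons, but the homotopy colimit over the (possibly infinite) poset of finite subsets, the infinite wedges occurring in the bar construction, and the geometric realization in the simplicial direction must each be known to be homotopy invariant on well-based $S$-modules. Keeping track of well-basedness throughout and invoking these homotopy-invariance properties of homotopy colimits --- the same circle of ideas that underlies Proposition~\ref{Binvariant} --- is where the work lies; granting them, the proof is a formality.
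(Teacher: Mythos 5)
Your proposal is correct and follows exactly the route the paper intends: the paper gives no separate proof, asserting only that the ``consistent use of homotopy colimits'' lets Proposition~\ref{Binvariant} carry over, and your argument -- the two-out-of-three reduction via the comparison lemma for $\pi_X$, then propagation through the homotopy colimit over finite subsets and the realization, tracking well-basedness throughout -- is precisely the bookkeeping being elided. Nothing is missing.
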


This result together with the next proposition shows that the functors $\Lambda_X(T,M)$ and 
$\Lambda_X(T)$ represent the derived Loday constructions on all based, respectively unbased, simplicial sets when $T$ and $M$ are well-based.
\begin{proposition}\label{LambdaLoday}
If $T$ is a cofibrant commutative $S$-algebra and $M$ a cofibrant $T$-module, then there are natural stable equivalences
$$
\Lambda_X(T,M)\simeq \mathcal L_X(T,M),\quad\text{and}\quad  \Lambda_X(T)\simeq \mathcal L_X(T). 
$$
\end{proposition}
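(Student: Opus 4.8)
The plan is to prove the statement first for finite based sets $X$, as a natural chain of stable equivalences of functors $\Fin_*\to\Sp^{\Sigma}$, and then to deduce the general based statement (and the unbased one) by passing to filtered colimits and geometric realizations. Throughout one uses that the underlying $S$-modules of a cofibrant commutative $S$-algebra $T$ and of a cofibrant $T$-module $M$ are cofibrant, hence levelwise well-based; thus all of B\"okstedt's homotopy invariance statements and the Shipley-type comparisons of Section \ref{Bsection} apply, as do the levelwise equivalence $\Lambda_X(T,M)\xr{\sim}\Lambda'_X(T,M)$ for finite $X$ and the chain $D\mathcal L_X(T,M)\simeq\mathcal L_X(T,M)$ from \eqref{shipleyequi}, both of which are moreover natural in $X$.

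The heart of the argument is a natural comparison on $\Fin_*$. For $X\in\Fin_*$ let $e_X\co\I\langle X\rangle\to\I$ be the functor sending an object $\theta$ to its value $\theta_X$ at the maximal subset; this is strictly natural in $X$, since for a based map $f\co X\to Y$ one has $(f_*\theta)_Y=\theta_{f^{-1}(Y)}=\theta_X$, so that $e_Y\circ f_*=e_X$. Using the canonical bijection $\coprod_{x\in X}\theta_x\to\theta_X$ together with the universal map $\bigwedge_{x\in X}T_x(\theta_x)\to\mathcal L_X(T,M)(\theta_X)$ coming from the definition of the smash product of symmetric spectra, one constructs a natural transformation
$$
\Psi_X\co F_X(T,M)\circ\pi_X\lra F_*(\mathcal L_X(T,M))\circ e_X
$$
of $\I\langle X\rangle$-diagrams; because $S$ and $T$ are commutative, $\Psi_X$ is independent of the orderings used in its definition and is itself natural in $X$, compatibly with the transformations $\phi_f$ of \eqref{phitransformation}. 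In other words, $(e_X,\Psi_X)$ is a morphism $(\I\langle X\rangle,F_X(T,M)\circ\pi_X)\to(\I,F_*(\mathcal L_X(T,M)))$ in $\mathcal D(\Sp^{\Sigma})$, natural in $X$, and applying the homotopy colimit functor $\mathcal D(\Sp^{\Sigma})\to\Sp^{\Sigma}$ yields a natural map
$$
\alpha_X\co\Lambda'_X(T,M)=\hocolim_{\I\langle X\rangle}F_X(T,M)\circ\pi_X\lra\hocolim_{\I}F_*(\mathcal L_X(T,M))=D\mathcal L_X(T,M).
$$
Combined with the natural equivalences $\Lambda_X(T,M)\xr{\sim}\Lambda'_X(T,M)$ and $D\mathcal L_X(T,M)\simeq\mathcal L_X(T,M)$, this will give the required natural chain on $\Fin_*$ once we know that $\alpha_X$ is a stable equivalence.

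To see that $\alpha_X$ is a stable equivalence we reduce to the non-functorial comparisons of Section \ref{Bsection}: choosing an ordering of $X$ and letting $c_X\co\I(X)\to\I$ be the associated concatenation functor, one has a natural isomorphism $c_X\circ\pi_X\cong e_X$ induced by the bijections $\coprod_{x\in X}\theta_x\cong\theta_X$, and under it $\Psi_X$ is identified with the pullback along $\pi_X$ of the transformation $F_X(T,M)\Rightarrow F_*(\mathcal L_X(T,M))\circ c_X$ that induces \eqref{shipleyequi2}; again because $S$ and $T$ are commutative, these two transformations are literally the same. Hence, by naturality of the homotopy colimit in the diagram category, $\alpha_X$ factors as the levelwise weak equivalence $\hocolim_{\I\langle X\rangle}F_X(T,M)\circ\pi_X\xr{\sim}\hocolim_{\I(X)}F_X(T,M)$ followed by the map \eqref{shipleyequi2}, which is a stable equivalence for cofibrant $T$ and $M$ by \cite{Sh}. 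Thus $\alpha_X$ is a stable equivalence, and we obtain the asserted natural chain on $\Fin_*$.

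It remains to extend, and to handle the unbased version. For a based simplicial set $X$ and each simplicial degree $k$, applying the chain on $\Fin_*$ to the filtered diagram of finite based subsets $Y\subseteq X_k$ gives $\Lambda_{X_k}(T,M)=\hocolim_{Y\subseteq X_k}\Lambda'_Y(T,M)\simeq\hocolim_{Y\subseteq X_k}\mathcal L_Y(T,M)$; since $T$ is cofibrant the inclusions $Y\subseteq Y'$ induce flat cofibrations of $S$-modules, so the latter homotopy colimit agrees with the colimit $\mathcal L_{X_k}(T,M)$. Realizing, and using that realization of simplicial $S$-modules with well-based values preserves levelwise stable equivalences, one gets $\Lambda_X(T,M)\simeq\mathcal L_X(T,M)$ naturally in $X$. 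For the unbased statement, any choice of basepoint $x_0$ in a nonempty finite set $X$ identifies $\Lambda'_X(T)$ with $\Lambda'_{(X,x_0)}(T,T)$ and $\mathcal L_X(T)$ with $\mathcal L_{(X,x_0)}(T,T)$; the resulting equivalence is independent of $x_0$ and natural in $\Fin$ because every morphism of nonempty finite sets underlies a based map, while the empty set contributes only the trivial equivalence $S\simeq S$, and one then extends to $\Simp$ as before. The main obstacle is the middle step: constructing $\Psi_X$ with the required naturality in $\theta$ and in $X$, and above all verifying that it matches the transformation behind \eqref{shipleyequi2} under $c_X\circ\pi_X\cong e_X$, since this is precisely the point at which commutativity of $T$ eliminates the auxiliary orderings and lets the naturally defined map $\alpha_X$ inherit its stable-equivalence property from the cited results.
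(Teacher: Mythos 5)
Your proposal is correct and follows essentially the same route as the paper: the paper likewise replaces the non-natural chain through $\hocolim_{\I(X)}F_X(T,M)$ by the map $\Lambda'_X(T,M)\to\hocolim_{\I}F_*(\mathcal L_X(T,M))$ induced by applying $\I\langle-\rangle$ to the projection $X\to *$ (your $e_X$), deduces that it is a stable equivalence from \eqref{shipleyequi2}, and then uses naturality in $X$ together with realization of good simplicial $S$-modules. Your treatment of the passage to infinite based sets and of the unbased case is somewhat more explicit than the paper's, but the argument is the same.
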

\begin{proof}
We consider the case of $\Lambda_X(T,M)$; the case of $\Lambda_X(T)$ is similar. Let first $X$ be a finite based set. 
We then have the stable equivalence 
$$
\mathcal L_X(T,M)\simeq \hocolim_{\I}F_*(\mathcal L_X(T,M))
$$  
from (\ref{shipleyequi}) and it follows from the above discussion that if we choose an ordering of 
$X$, then there is a chain of stable equivalences  
$$
\Lambda_X(T,M)\xr{\sim}\Lambda'_X(T,M)\xr{\sim}\hocolim_{\I(X)}F_X(T,M)\xr{\sim}
\hocolim_{\I}F_*(\mathcal L_X(T,M)).
$$
However, since this chain of maps is not natural in $X$ it cannot be used directly to obtain a levelwise equivalence when applied to a simplicial set. For this reason we instead consider the map
$$
\Lambda_X'(T,M)=\hocolim_{\I\langle X\rangle}F_X(T,M)\circ \pi_X\to \hocolim_{\I}F_*(\mathcal L_X(T,M)),
$$
defined in analogy with (\ref{shipleyequi2}) but now induced by the functor obtained by applying 
$\I\langle-\rangle$ to the projection $X\to *$. The fact that (\ref{shipleyequi2}) is a stable equivalence easily implies the same for this map and we observe that in addition it is natural in $X$. In this way we obtain a chain of stable equivalences which are natural in the finite based set $X$, hence we get an induced chain of levelwise equivalences of simplicial $S$-modules when applied to a simplicial set. The cofibrancy assumptions on $T$ and $M$ imply that these simplicial objects are homotopically well-behaved so that the map of realizations is also a stable equivalence, see \cite{Sh}, Corollary 4.1.6.  
\end{proof}

\subsection{The multiplicative structure of $\Lambda_X(T,M)$}\label{multiplicativesection}
In this section $M$ denotes a commutative $T$-algebra. As recalled in Section \ref{LodayS}, the canonical map $T\to \mathcal L_X(T,M)$ is then a map of commutative $S$-algebras such that the Loday construction $\mathcal L_X(T,M)$ inherits a commutative $T$-algebra structure. For our homotopy invariant version it is easy to see that the analogous map $\Lambda_*(T)\to\Lambda_X(T,M)$ is a map of $S$-algebras but these $S$-algebras are not strictly commutative. (This is again due to the fact that the category $\I$ is not strictly symmetric). However, we show below that the map in question is a map of $E_{\infty}$ $S$-algebras such that we may view $\Lambda_*(T,M)$ as an $E_{\infty}$ 
$\Lambda_*(T)$-algebra. It is proved in \cite{EM} that any $E_{\infty}$ $S$-algebra may be rectified to a strictly commutative one, but for many purposes it is more convenient to work with the explicit $E_{\infty}$ structures described here. In the following theorem $\mathcal E$ denotes the Barratt-Eccles operad and we write $\Sp^{\Sigma}[\mathcal E]$ for the category of $S$-modules with $\mathcal E$-action; see Appendix \ref{hocolimEinfty} for details. Since 
$\mathcal E$ is an $E_{\infty}$ operad and the associativity operad (i.e., the operad with $n$th space equal to $\Sigma_n$) is augmented over $\mathcal E$, the objects in 
$\Sp^{\Sigma}[\mathcal E]$ are $E_{\infty}$ $S$-algebras by definition.  

\begin{proposition}\label{EinftyLambda}
If $M$ is a commutative $T$-algebra, then $\Lambda(T,M)$ lifts to a functor 
$$
\Lambda(T,M)\co \Simp_*\to\Sp^{\Sigma}[\mathcal E].
$$
\end{proposition}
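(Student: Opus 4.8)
The plan is to exhibit, for each finite based set $X$, a natural $\mathcal E$-action on $\Lambda'_X(T,M)$ and then pass to homotopy colimits over $Y\subseteq X$ and realizations to get the action on $\Lambda_X(T,M)$ for all based simplicial sets $X$. The source of the $\mathcal E$-structure is the commutativity of $M$ as a $T$-algebra: for each $p\geq 0$, the $p$-fold multiplication on $M$ (together with the $T$-module structure) makes the assignment $X\mapsto F_X(T,M)$ carry the fold maps $X^{\vee p}\to X$ and the $\Sigma_p$-permutations of the wedge summands into a coherent system of maps. Concretely, I would first recall from Appendix \ref{appendix} (the general framework for multiplicative properties of homotopy colimits) that an $\mathcal E$-diagram of $S$-modules — meaning a functor to $\Sp^{\Sigma}$ out of an $\mathcal E$-category, or equivalently a lax symmetric-monoidal-up-to-$\mathcal E$ structure on the indexing data — has a homotopy colimit landing in $\Sp^{\Sigma}[\mathcal E]$. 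So the real task is to package $(\I\langle X\rangle,\, F_X(T,M)\circ\pi_X)$ as such an $\mathcal E$-diagram, naturally in $X$.

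The key steps, in order: (1) Observe that $\I$ is a \emph{permutative-up-to-$\mathcal E$} category — the concatenation $\sqcup$ is strictly associative and unital but only symmetric up to the $\Sigma_n$-action, and the Barratt-Eccles operad is precisely designed to record this coherence; hence so is each product category $\I(X)=\I^X$ and, via the maps $\pi_X$, the category $\I\langle X\rangle$. (2) For the diagram of $S$-modules: the $p$-fold external smash product $F_X(T,M)(j_1)\wedge\cdots\wedge F_X(T,M)(j_p)$ maps to $F_X(T,M)(j_1\sqcup\cdots\sqcup j_p)$ by the usual pairing of function spectra $F_S(A_i,B_i)$ composed with the multiplication maps $\bigwedge_{x}M(\theta^{(1)}_x)\wedge\cdots\wedge\bigwedge_x M(\theta^{(p)}_x)\to \bigwedge_x M(\theta^{(1)}_x\sqcup\cdots\sqcup\theta^{(p)}_x)$ supplied by the commutative $T$-algebra structure of $M$ (and the commutative $S$-algebra structure of $T$ for the $x\neq *$ factors). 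The commutativity of $T$ and $M$ is exactly what is needed for these pairings to be $\Sigma_p$-equivariant in the appropriate $\mathcal E$-sense — just as in the definition of the transformations $\phi_f$ in Section \ref{invariantsection}, where one checks that the maps $d_f(\theta),t_f(\theta)$ are independent of the chosen orderings. (3) Check that this $\mathcal E$-diagram structure is natural in the based set $X$: the transformations $\phi_f$ of (\ref{phitransformation}) are compatible with the pairings in (2), so one obtains a functor from $\Fin_*$ to the category of $\mathcal E$-diagrams in $\Sp^{\Sigma}$. (4) Apply the homotopy-colimit-preserves-$\mathcal E$ result from Appendix \ref{appendix} to get $\Lambda'(T,M)\co\Fin_*\to\Sp^{\Sigma}[\mathcal E]$, then extend over finite based subsets $Y\subseteq X$ (another homotopy colimit, still inside $\Sp^{\Sigma}[\mathcal E]$) and finally realize levelwise over $\Simp_*$ — realization of simplicial objects in $\Sp^{\Sigma}[\mathcal E]$ is again in $\Sp^{\Sigma}[\mathcal E]$ since it is performed levelwise and $\mathcal E$ acts levelwise.

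The main obstacle is step (2)–(3): making precise the sense in which $F_X(T,M)$ is an $\mathcal E$-diagram and verifying the coherence, i.e.\ that the $\Sigma_p$-actions on the $p$-fold concatenations in $\I\langle X\rangle$ and the $\Sigma_p$-equivariance of the smash-product pairings fit together into an honest algebra over the Barratt-Eccles operad in the category of diagrams. This is where the non-strict-symmetry of $\I$ is both the problem and (via $\mathcal E$) its own solution, and it is the reason the statement is phrased with $\mathcal E$ rather than strict commutativity. Once the bookkeeping of step (2) is set up correctly — essentially recording that every permutation of smash factors and of concatenation factors is realized by a path in the relevant $\mathcal E$-space — the remaining steps are formal consequences of the general homotopy-colimit machinery of the appendix and the levelwise nature of realization. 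I would relegate the detailed verification of the operadic coherence to the appendix's framework and keep the body of the proof to identifying the pairings and their naturality in $X$.
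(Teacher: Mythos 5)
Your overall strategy (get the $E_\infty$ structure from the concatenation in $\I$ plus the commutative multiplications of $T$ and $M$, feed it into the machinery of Appendix \ref{appendix}, and finish by naturality and levelwise realization) is the right one, and your steps (1)--(3) for a \emph{finite} based set $X$ are essentially sound: $\I$ and hence $\I\langle X\rangle$ are honestly permutative (strict monoidal, symmetric with non-identity symmetry isomorphisms -- ``permutative-up-to-$\mathcal E$'' is not needed as a separate notion; the $\mathcal E$-factor in Proposition \ref{hocolimEinftyprop} is exactly what records the non-strict symmetry), and the pairings you describe are the ones appearing in (\ref{Einftymultiplication}).

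The genuine gap is in step (4), at the passage from finite based sets to general based sets: you assert that the homotopy colimit over the finite based subsets $Y\subseteq X$ is ``another homotopy colimit, still inside $\Sp^{\Sigma}[\mathcal E]$.'' This is false as stated. A homotopy colimit of a diagram of $\mathcal E$-algebras and $\mathcal E$-maps carries no natural $\mathcal E$-action: already in simplicial degree zero the homotopy colimit over $\mathcal F_X$ is a wedge $\bigvee_Y \Lambda'_Y(T,M)$, and to multiply a summand over $Y_1$ with a summand over $Y_2$ you must land in the summand over $Y_1\cup Y_2$, which requires a lax symmetric monoidal (indeed $\mathcal E$-) structure on the $\mathcal F_X$-diagram $Y\mapsto\Lambda'_Y(T,M)$ with respect to the union of subsets -- not merely an $\mathcal E$-action on each $\Lambda'_Y(T,M)$ separately. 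This is exactly why the paper organizes the proof the other way around: it takes the \emph{outer} indexing category $\mathcal F_X$ (finite based subsets of $X$ under union, a strict symmetric monoidal category) as the permutative category to which Corollary \ref{BEhocolim} is applied, and constructs by hand structure maps
$$
\mathcal E(n)_+\wedge\Lambda'_{Y_1}(T,M)\wedge\dots\wedge\Lambda'_{Y_n}(T,M)\to\Lambda'_{Y_1\cup\dots\cup Y_n}(T,M)
$$
using the concatenation functors $\prod_i\I\langle Y_i\rangle\to\I\langle Y\rangle$ of (\ref{thetasqcup}), their extension over the translation categories $\tilde\Sigma_n$, and the maps (\ref{Einftymultiplication}). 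Your argument can be repaired along these lines (the mixed pairings $\Lambda'_{Y_1}\wedge\Lambda'_{Y_2}\to\Lambda'_{Y_1\cup Y_2}$ exist and are coherent), but supplying and verifying that $\mathcal F_X$-indexed structure is the actual content of the proof; it cannot be waved through as a formal property of homotopy colimits. The remaining steps (naturality in $X$ via the strict symmetric monoidal functors $f_*\co\mathcal F_X\to\mathcal F_{X'}$, and levelwise realization preserving operad actions) are then as you say.
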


We shall deduce this from the general machinery developed in Appendix \ref{appendix}. Consider first the case of a (not necessarily finite) based set $X$ and write $\mathcal F_X$ for the category of finite based subsets $Y$ in $X$ such that by definition $\Lambda_X(T,M)$ is the homotopy colimit of the $\mathcal F_X$-diagram $Y\mapsto \Lambda'_Y(T,M)$. Notice that the usual union of based subsets makes $\mathcal F_X$ a strict symmetric monoidal category. Given a family $Y_1,\dots,Y_n$ of finite based subsets of $X$ with union $Y$ we have the composite functor
\begin{equation}\label{thetasqcup}
\prod_{i=1}^n\I\langle Y_i\rangle\to \prod_{i=1}^n\I\langle Y\rangle\xr{\sqcup}\I\langle Y\rangle,
\quad (\theta_1,\dots,\theta_n)\mapsto \theta_1\sqcup\dots\sqcup\theta_n,
\end{equation}
where the first arrow is induced by the inclusions of $Y_i$ in $Y$ and the second arrow is given by the monoidal structure of $\I$. Thus, for a subset $V$ of $Y$,
$$
\theta_1\sqcup\dots\sqcup \theta_n(V)=\theta_1(Y_1\cap V)\sqcup\dots
\sqcup\theta_n(Y_n\cap V). 
$$
Letting the $Y_i$ vary we may view this as a natural transformation of 
$\mathcal F_X^n$-diagrams. We shall also need the associated map of $S$-algebras
$$
\bigwedge_{i=1}^n\bigwedge_{y\in Y_i}T_y(\theta_i\{y\})\xr{\sim}\bigwedge_{y\in Y}\bigwedge_{\{i: y\in Y_i\}}
T_y(\theta_i\{y\})\to \bigwedge_{y\in Y}T_y(\theta_1\sqcup\dots\sqcup\theta_n\{y\}).
$$
The first map is the obvious permutation of smash factors and the second is induced by the multiplications in $T$ and $M$. Proceeding as in the definition of the map 
(\ref{phitransformation}) we get from this a natural transformation of $\prod_{i}\I\langle Y_i\rangle$-diagrams
\begin{equation}\label{Einftymultiplication}
\bigwedge_{i=1}^nF_{Y_i}(T,M)\circ \pi_{Y_i}(\theta_i)\to F_Y(T,M)\circ
\pi_Y(\theta_1\sqcup\dots\sqcup\theta_n).
\end{equation}

\bigskip
\noindent\textit{Proof of Proposition \ref{EinftyLambda}.}
As above we first consider a based set $X$ and claim that $\Lambda_X(T,M)$ has a canonical $\mathcal E$-action. For this it suffices by Corollary \ref{BEhocolim} that $\mathcal E$ acts on the $\mathcal F_X$-diagram $\Lambda'(T,M)$, thought of as an object in the symmetric monoidal category $\mathcal F_X\Sp^{\Sigma}$ of functors from $\mathcal F_X$ to $\Sp^{\Sigma}$. As explained in Appendix \ref{hocolimEinfty} we must therefore exhibit a sequence of natural transformations 
$$
\theta_n\co\mathcal E(n)_+\wedge\Lambda'_{Y_1}(T,M)\wedge\dots\wedge\Lambda'_{Y_n}(T,M)\to\Lambda'_{Y}(T,M)
$$
of $\mathcal F_X^n$-diagrams, satisfying the standard unitality, associativity and equivariance conditions required for an operad action. Using the simplicial description of 
$\mathcal E(n)$ given in (\ref{BEsimplicial}) and the fact that topological realization preserves smash products it suffices to define a sequence of maps of simplicial $S$-modules
$$
\Sigma^{k+1}_{n+}\wedge\bigwedge_{i=1}^n\bigvee_{\{\theta_0^i\leftarrow\dots\leftarrow \theta_k^i\}}
F_{Y_i}(T,M)\circ\pi_{Y_i}(\theta_k^i)\to
\bigvee_{\left\{\psi_0\leftarrow\dots\leftarrow\psi_k\right\}} F_Y(T,M)\circ\pi_Y(\psi_k)
$$    
satisfying the analogous coherence conditions. For this we observe that (\ref{thetasqcup}) extends over the translation category $\tilde\Sigma_n$ in the sense that there is a functor
$$
\tilde\Sigma_n\times\I\langle Y_1\rangle\times\dots\I\langle Y_n\rangle\to \I\langle Y\rangle,\quad (\sigma,\theta_1,
\dots\theta_n)\mapsto \theta_{\sigma^{-1}(1)}\sqcup\dots\sqcup\theta_{\sigma^{-1}(n)},
$$ 
defined in analogy with (\ref{BEaction}). Keeping an element $(\sigma_0,\dots,\sigma_k)$ in 
$\Sigma_n^{k+1}$ fixed we then use this functor together with the map in 
(\ref{Einftymultiplication}) to map the wedge component indexed by $\theta_0^i\leftarrow\dots\leftarrow\theta_k^i$ for $i=1,\dots,n$, to the wedge component in the target indexed by 
$$
\theta_0^{\sigma_0^{-1}(1)}\sqcup\dots\sqcup\theta_0^{\sigma_0^{-1}(n)}\leftarrow\dots\leftarrow
\theta_k^{\sigma_k^{-1}(1)}\sqcup\dots\sqcup\theta_k^{\sigma_k^{-1}(n)}.
$$
In detail, the map in question is given by the composition 
\begin{align*}
\bigwedge_{i=1}^nF_{Y_i}(T,M)\circ\pi_{Y_i}(\theta_k^i)&\to 
F_Y(T,M)\circ\pi_Y(\theta_k^1\sqcup\dots\sqcup\theta_k^n)\\
&\to F_Y(T,M)\circ\pi_Y(\theta_k^{\sigma^{-1}(1)}\sqcup\dots\sqcup \theta_k^{\sigma^{-1}(n)})
\end{align*}
where the last map is induced by the permutation $\sigma_k$. 
This should be compared with the definition of the action in the proof of Proposition 
\ref{hocolimEinftyprop}.  
We next observe that a map of based sets $f\co X\to X'$ induces a strict symmetric monoidal functor $f_*\co\mathcal F_X\to\mathcal F_{X'}$ such that the associated natural transformation of $\mathcal F_X$-diagrams
$$
\Lambda_Y'(T,M)\to\Lambda_{f(Y)}'(T,M),\quad Y\subseteq X,
$$
is an $\mathcal E$-map in $\mathcal F_X\Sp^{\Sigma}$. Hence the induced map 
$\Lambda_X(T,M)\to\Lambda_{X'}(T,M)$ of homotopy colimits is a morphism in $\Sp^{\Sigma}[\mathcal E]$ by Corollary \ref{BEhocolim}. Let finally $X$ be a based simplicial set. Then it follows from the above that the correspondence $[k]\mapsto\Lambda_{X_k}(T,M)$ defines a simplicial object in 
$\Sp^{\Sigma}[\mathcal E]$ and since operad actions are preserved under topological realization we conclude that $\Lambda_X(T,M)$ inherits an $\mathcal E$-action which is clearly natural in 
$X$. \qed

\medskip
By an analogous argument we also have the following unbased version
\begin{proposition}
For a commutative $S$-algebra $T$, the functor $\Lambda(T)$ lifts to a functor
$$
\Lambda(T)\co\Simp\to\Sp^{\Sigma}[\mathcal E].
\eqno\qed
$$
\end{proposition}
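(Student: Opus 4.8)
The plan is to transcribe the proof of Proposition \ref{EinftyLambda} with $M$ replaced by $T$ throughout, now working over the category $\mathcal F_X$ of finite \emph{unbased} subsets of an unbased set $X$. The key structural point is that $\mathcal F_X$ is again a strict symmetric monoidal category under union of subsets, with monoidal unit the empty set, and that $\Lambda'_{\emptyset}(T)=\hocolim_{\I\langle\emptyset\rangle}F_{\emptyset}(T,T)\circ\pi_{\emptyset}$ is the sphere spectrum $S$ (the category $\I\langle\emptyset\rangle$ is terminal and $F_\emptyset(T,T)$ is constant at $S$), so it serves correctly as the monoidal unit. As in the based case, it then suffices by Corollary \ref{BEhocolim} to equip the $\mathcal F_X$-diagram $Y\mapsto\Lambda'_Y(T)$ with an $\mathcal E$-action in the symmetric monoidal functor category $\mathcal F_X\Sp^{\Sigma}$; the case of a general simplicial set follows by applying this degreewise and using that operad actions are preserved under realization.

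First I would construct, for a family $Y_1,\dots,Y_n$ of finite subsets of $X$ with union $Y$, the concatenation functor $\prod_{i}\I\langle Y_i\rangle\to\I\langle Y\rangle$, $(\theta_1,\dots,\theta_n)\mapsto\theta_1\sqcup\dots\sqcup\theta_n$, exactly as in (\ref{thetasqcup}), together with the associated map of $S$-algebras $\bigwedge_{i}\bigwedge_{y\in Y_i}T(\theta_i\{y\})\to\bigwedge_{y\in Y}T(\theta_1\sqcup\dots\sqcup\theta_n\{y\})$ obtained by permuting the smash factors and then applying the multiplication in $T$ (in the unbased case every smash factor is $T$, so there is no interplay with a module structure). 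Proceeding as in the definition of the maps (\ref{phitransformation}) and (\ref{Einftymultiplication}), this gives a natural transformation of $\prod_{i}\I\langle Y_i\rangle$-diagrams $\bigwedge_{i}F_{Y_i}(T,T)\circ\pi_{Y_i}(\theta_i)\to F_Y(T,T)\circ\pi_Y(\theta_1\sqcup\dots\sqcup\theta_n)$, and letting the $Y_i$ vary, a natural transformation of $\mathcal F_X^n$-diagrams. Extending the concatenation over the translation category $\tilde\Sigma_n$ as in (\ref{BEaction}) and combining this with the simplicial model (\ref{BEsimplicial}) of $\mathcal E(n)$, I would then assemble the maps $\theta_n\co\mathcal E(n)_+\wedge\Lambda'_{Y_1}(T)\wedge\dots\wedge\Lambda'_{Y_n}(T)\to\Lambda'_Y(T)$ of $\mathcal F_X^n$-diagrams, with the unitality, associativity and equivariance conditions verified exactly as in the proof of Proposition \ref{hocolimEinftyprop}.

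It then remains to check functoriality: a map of unbased sets $f\co X\to X'$ induces a strict symmetric monoidal functor $f_*\co\mathcal F_X\to\mathcal F_{X'}$ such that the resulting natural transformation $\Lambda'_Y(T)\to\Lambda'_{f(Y)}(T)$ of $\mathcal F_X$-diagrams is an $\mathcal E$-map in $\mathcal F_X\Sp^{\Sigma}$, so that the induced map $\Lambda_X(T)\to\Lambda_{X'}(T)$ of homotopy colimits is a morphism in $\Sp^{\Sigma}[\mathcal E]$ by Corollary \ref{BEhocolim}; and for a simplicial set $X$ the correspondence $[k]\mapsto\Lambda_{X_k}(T)$ is a simplicial object in $\Sp^{\Sigma}[\mathcal E]$, whose realization inherits the desired $\mathcal E$-action, clearly natural in $X$. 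I would also record that when $X$ carries a base point, the canonical levelwise equivalence $\Lambda_X(T,T)\xr{\sim}\Lambda_X(T)$ is a map in $\Sp^{\Sigma}[\mathcal E]$, since the relevant multiplication maps agree under the identification $T_*=T$.

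Since every ingredient transports directly from the based case, there is no genuine obstacle here; the only points that demand (minor) attention are the treatment of the empty set as the monoidal unit of $\mathcal F_X$, so that the operad coherences (including the unit data) hold, and the order-independence of the maps $d_f$, $t_f$ and of (\ref{Einftymultiplication}), which as before is harmless because $S$ and $T$ are commutative.
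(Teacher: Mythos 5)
Your proposal is correct and is exactly the argument the paper intends: the paper's proof of this unbased version consists of the single remark that it follows ``by an analogous argument'' from Proposition \ref{EinftyLambda}, and you have carried out that analogy faithfully, including the two points that actually need checking (the empty set as the monoidal unit of $\mathcal F_X$, with $\Lambda'_{\emptyset}(T)\cong S$, and the order-independence of the multiplication maps coming from the commutativity of $S$ and $T$).
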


\begin{remark}
Let again $M$ be a commutative $T$-algebra. Imitating the construction of Hesselholt-Madsen from \cite{HM}, Proposition 2.7.1, one can modify the definition of 
$\Lambda_X(T,M)$ to give a strictly commutative $S$-algebra when $T$ and $M$ are 
convergent. The main point is to redefine the $n$th space of $\Lambda'_X(T,M)$ to be a certain homotopy colimit over the $n$-fold product category $\I\langle X\rangle^n$.  
For convergent $T$ and $M$ it then follows from 
B\"okstedt's approximation lemma, \cite{M}, Lemma 2.3.7, that the resulting commutative $S$-algebra is stably equivalently to $\Lambda_X(T,M)$. We shall not make use of this construction and leave the details with the reader.
\end{remark}

\section{$\I$-spaces and symmetric Thom spectra}\label{Isymmetricthomsection}
In this section we collect the necessary background material on $\I$-spaces and symmetric Thom spectra. 

\subsection{The homotopy theory of $\I$-spaces}
We write $\I\mathcal U$ for the category of $\I$-spaces, that is, functors $A\co\I\to\mathcal U$. As is generally the case for a category of diagram spaces defined on a small symmetric monoidal category  
this inherits a symmetric monoidal structure, here written $\boxtimes$, such that for two $\I$-spaces $A$ and $B$,
$$
A\boxtimes B(n)=\colim_{\mathbf n_1\sqcup\mathbf n_2\to \mathbf n}A(n_1)\times B(n_2)
$$
where the colimit is over the category $\sqcup/\mathbf n$ associated to the concatenation functor $\sqcup\co\I^2\to \I$. The constant $\I$-diagram $\I(\mathbf 0,-)$ is the unit for the monoidal structure. We define an \emph{$\I$-space monoid} to be a monoid in 
$\I\mathcal U$ under this monoidal product. There is an obvious inclusion of 
$\mathcal U$ in $\I\mathcal U$ as the subcategory of constant $\I$-spaces.  The colimit functor  provides a retraction of this but we shall be more interested in the homotopy colimit functor from $\I\mathcal U$ to $\mathcal U$.
We say that a map of $\I$-spaces is an \emph{$\I$-equivalence} if the induced map of homotopy colimits is a weak homotopy equivalence of spaces. This is the analogue for $\I$-spaces of a stable equivalence of $S$-modules. It is proved in \cite{Sch3} that the $\I$-equivalences are the weak equivalences in a model structure on $\I\mathcal U$ whose fibrations, which we refer to as $\I$-fibrations, are the levelwise fibrations $A\to B$ such that the diagrams
\begin{equation}\label{Ifibrations}
\begin{CD}
A(m)@>>>B(m)\\
@VVV @VVV\\
A(n)@>>> B(n)
\end{CD}
\end{equation}
are homotopy cartesian for all morphisms $\mathbf m\to\mathbf n$ in $\I$. (The cofibrations also have an explicit description but we shall not need this here). With this model structure the inclusion and colimit functors define a Quillen equivalence between $\I\mathcal U$ and 
$\mathcal U$ with its usual model structure. There also is a \emph{positive model structure} on 
$\I\mathcal U$ whose weak equivalences are again the $\I$-equivalences and whose fibrations are the positive $\I$-fibrations. 
This is the analogue of the positive model structure on $S$-modules introduced in  
\cite{MMSS}. 
We say that an $\I$-space $A$ is 
\emph{convergent} if there exists an unbounded, non-decreasing sequence of integers
$\lambda_m$ such that a morphism $\mathbf m\to\mathbf n$ in $\I$ induces a $\lambda_m$-connected map $A(m)\to A(n)$. The following observation will be needed later.

\begin{lemma}\label{Ifibrationconvergence}
Let $A\to B$ be an $\I$-fibration or a position $\I$-fibration and suppose that $B$ is convergent. Then $A$ is also convergent.\qed
\end{lemma}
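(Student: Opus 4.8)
The plan is to read the conclusion off directly from the defining property of an $\I$-fibration, using nothing more than the behaviour of homotopy fibres in a homotopy cartesian square. Fix an unbounded, non-decreasing sequence of integers $\lambda_m$ witnessing the convergence of $B$, so that every morphism $\mathbf m\to\mathbf n$ in $\I$ induces a $\lambda_m$-connected map $B(m)\to B(n)$. For such a morphism the naturality square with vertices $A(m)\to B(m)$ and $A(n)\to B(n)$ is homotopy cartesian: this is part of the definition of an $\I$-fibration, and in the positive variant it is imposed for $m\ge 1$, which is all we will need. By the elementary fact that the two vertical maps in a homotopy cartesian square have weakly equivalent homotopy fibres over corresponding points, the homotopy fibre of $A(m)\to A(n)$ over any point of $A(n)$ is weakly equivalent to the homotopy fibre of $B(m)\to B(n)$ over its image in $B(n)$, which is $(\lambda_m-1)$-connected. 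Hence $A(m)\to A(n)$ is $\lambda_m$-connected, and the very same sequence $\lambda_m$ shows that $A$ is convergent.

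Thus the actual proof would consist of three routine steps: first, recalling the homotopy-fibre comparison in a homotopy cartesian square — here there is essentially nothing to do, since $A\to B$ is a levelwise fibration, so the square is already a strict pullback along the fibration $A(n)\to B(n)$ and the fibre identification is transparent; second, invoking that $B$ convergent means precisely that the maps $B(m)\to B(n)$ are $\lambda_m$-connected; and third, transporting this connectivity estimate across the equivalence of fibres to obtain the bound for $A$.

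The only place that calls for any care — and hence the main (though minor) obstacle — is the positive case, where the levelwise fibration and homotopy cartesian conditions hold only in positive degrees, so the argument yields the estimate for all morphisms $\mathbf m\to\mathbf n$ with $m\ge 1$. This is not a genuine restriction: convergence is insensitive to the initial term of the sequence, so one simply extends $(\lambda_m)_{m\ge 1}$ by choosing $\lambda_0$ small enough, and for the finitely many indices $m$ with $\lambda_m\le 0$ there is nothing to verify. The same observation handles the bookkeeping over path components, which is automatic here because $\lambda_m$ is eventually large and positive.
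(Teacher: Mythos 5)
Your proof is correct, and it is the routine argument the paper itself considers too standard to write out (the lemma is stated with an immediate \qed and no proof): in the homotopy cartesian squares defining an $\I$-fibration, the homotopy fibres of $A(m)\to A(n)$ agree with those of $B(m)\to B(n)$, so the connectivity estimate transfers, and the positive case is absorbed by adjusting $\lambda_0$ since convergence only asks for \emph{some} unbounded non-decreasing sequence. One small inaccuracy in an aside: a levelwise fibration whose squares are homotopy cartesian need not make those squares \emph{strict} pullbacks --- the comparison map $A(m)\to A(n)\times_{B(n)}B(m)$ is only a weak equivalence --- but your argument never actually uses this, since the fibre comparison follows from homotopy cartesianness alone.
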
    
As for symmetric spectra the positive model structure on $\I\mathcal U$ induces a model structure on the category of commutative monoids.

\begin{proposition}[\cite{Sch3}]\label{commutativemodel}
There is a model structure on the category of commutative $\I$-space monoids whose weak equivalences and fibrations are the $\I$-equivalences and positive $\I$-fibrations of the underlying $\I$-spaces.\qed    
\end{proposition}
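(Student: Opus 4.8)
The plan is to obtain this model structure by transferring the positive $\I$-model structure on $\I\U$ along the free--forgetful adjunction
$$
\mathbb C\co \I\U \rightleftarrows \mathcal A\co U,
$$
where $\mathcal A$ denotes the category of commutative $\I$-space monoids, $U$ the forgetful functor, and $\mathbb C(A)=\coprod_{n\geq 0}A^{\boxtimes n}/\Sigma_n$ (with $A^{\boxtimes 0}=\I(\mathbf 0,-)$) the free commutative monoid functor. One declares a morphism in $\mathcal A$ to be a weak equivalence, respectively fibration, exactly when $U$ of it is an $\I$-equivalence, respectively positive $\I$-fibration, and then invokes the standard transfer (right-induction) criterion, in the form used for algebras over operads: it suffices that (i) the images under $\mathbb C$ of the domains and codomains of the generating cofibrations and generating trivial cofibrations of the positive model structure are small in $\mathcal A$, and (ii) every transfinite composition of pushouts in $\mathcal A$ of maps of the form $\mathbb C(j)$, with $j$ a generating positive trivial cofibration, is sent by $U$ to an $\I$-equivalence. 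Point (i) is routine: the positive $\I$-model structure is cofibrantly generated with generating data having small domains, $\mathbb C$ preserves filtered colimits, and $U$ creates them; hence the small object argument runs in $\mathcal A$ and produces the required functorial factorizations once (ii) is known.

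The heart of the matter is (ii). It rests on the classical filtration of a pushout
$$
\begin{CD}
\mathbb C(K) @>\mathbb C(j)>> \mathbb C(L)\\
@VVV @VVV\\
R @>>> R'
\end{CD}
$$
in $\mathcal A$: there is a sequence $R=R_0\to R_1\to R_2\to\cdots$ with $\colim_n R_n=R'$ in which the layer $R_{n-1}\to R_n$ is a pushout, formed using the multiplication of $R$, of the $\Sigma_n$-orbits $(j^{\square n})/\Sigma_n$ of the $n$-fold iterated pushout--product of $j$ with itself, the group $\Sigma_n$ permuting the $n$ copies of $L$. Thus (ii) reduces to showing that the $\I$-equivalences in $\I\U$ are preserved by (a) the functor $R\boxtimes(-)$ for an arbitrary $\I$-space $R$, (b) the pushouts appearing in the filtration, and (c) the passage to $\Sigma_n$-orbits in the layer construction when $j$ is a positive trivial cofibration. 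Items (a) and (b) are the monoidal model structure axioms and left properness of the positive $\I$-model structure, both established in \cite{Sch3}.

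The main obstacle is (c): one must verify that when $j$ is a positive trivial cofibration the layer maps $R_{n-1}\to R_n$ are $\I$-equivalences, which comes down to showing that $\Sigma_n$-orbits of the iterated pushout--product $j^{\square n}$ behave homotopically like homotopy orbits — the analogue of the ``commutative monoid axiom'' for the positive $\I$-model structure. This is exactly the step for which the \emph{positive} model structure is indispensable. The point is that for a positive cofibrant $L$ the group $\Sigma_n$ acts freely enough on the relevant subobjects of $L^{\boxtimes n}$ — informally, because every tensor factor ``lives in strictly positive levels'' — that the canonical comparison from homotopy orbits to strict orbits
$$
E\Sigma_n\times_{\Sigma_n}(-)\longrightarrow(-)/\Sigma_n
$$
is an $\I$-equivalence on the objects occurring in the filtration, while $E\Sigma_n\times_{\Sigma_n}(-)$ plainly preserves $\I$-equivalences since homotopy colimits do. In the ($\mathbf 0$-inclusive) $\I$-model structure this breaks down at the monoidal unit $\I(\mathbf 0,-)$, just as for symmetric spectra the sphere spectrum sits in level $0$ and $\Sigma_n$ fails to act freely on $S^{\wedge n}$; the present argument is the $\I$-space counterpart of what is carried out for symmetric spectra in \cite{MMSS} and \cite{Sh2}. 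Once (c) is in place the transfer criterion applies: $\mathcal A$ inherits a cofibrantly generated model structure, generated by the images under $\mathbb C$ of the generators of the positive model structure, whose weak equivalences and fibrations are by construction exactly the maps that are $\I$-equivalences, respectively positive $\I$-fibrations, on underlying $\I$-spaces, which is the assertion.
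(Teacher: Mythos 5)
The paper offers no proof of this proposition: it is imported wholesale from \cite{Sch3} (Sagave--Schlichtkrull, in preparation), so there is nothing internal to compare against. Your outline is the standard right-transfer argument along the free commutative monoid adjunction, with the filtration of pushouts by symmetric powers and the observation that positivity makes $\Sigma_n$-orbits of $j^{\square n}$ homotopically correct; this is exactly the route taken in the cited reference, so the approach is the intended one.

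One caveat on your step (a): it is not true, and not a consequence of the pushout-product axiom, that $R\boxtimes(-)$ preserves $\I$-equivalences for an \emph{arbitrary} $\I$-space $R$ (just as smashing with an arbitrary symmetric spectrum does not preserve stable equivalences). What the argument actually needs is that the maps $(Q^n_{n-1}\to L^{\boxtimes n})/\Sigma_n$ occurring in the layers are simultaneously $\I$-equivalences and levelwise (h-)cofibrations with flat source and target, so that applying $R\boxtimes(-)$ and then cobase change preserves the property of being a trivial h-cofibration; flatness of the domains and codomains of the generating (trivial) cofibrations, not a blanket preservation statement, is the correct input here. With that adjustment the sketch is sound.
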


Let $\mathcal C$ be an operad in the sense of \cite{May} and write $C$ for the associated monad on $\I\mathcal U$, that is, 
$$
C(A)=\coprod_{k\geq 0}\mathcal C(k)\times_{\Sigma_k}A^{\boxtimes k}.
$$
We say that $\mathcal C$ acts on an $\I$-space $A$ if $A$ is an algebra for this monad and we write $\I\mathcal U[\mathcal C]$ for the category of $\I$-spaces with 
$\mathcal C$-action. By the universal property of the $\boxtimes$-product, a $\mathcal C$-action amounts to a sequence of natural transformations 
$$
\mathcal C(k)\times A(n_1)\times\dots\times A(n_k)\to A(n_1+\dots+n_k)  
$$
where for each $k$ we view both sides as functors from $\I^k$ to $\mathcal U$. These maps are required to satisfy the usual unitality, associativity and equivariance conditions, see \cite{May}, Lemma 1.4. Since the inclusion of $\mathcal U$ in $\I\mathcal U$ is strong symmetric monoidal, a $\mathcal C$-action on a space induces a $\mathcal C$-action on the associated constant $\I$-space. One of the advantages of working in the category $\I\mathcal U$ is that $E_{\infty}$ objects can be rectified to strictly commutative objects in the sense of the following theorem. We formulate this only for the Barratt-Eccles operad $\mathcal E$ but the result holds in general for $E_{\infty}$ operads under mild point-set topological conditions. Notice, that since the associativity operad is augmented over $\mathcal E$, an $\mathcal E$-$\I$-space is automatically an $\I$-space monoid.

\begin{theorem}[\cite{Sch3}]\label{Irectification}
Let $A$ be an $\I$-space with an action of the Barratt-Eccles operad $\mathcal E$. There is a functorial chain of $\I$-equivalences of 
$\mathcal E$-$\mathcal I$-space monoids 
$$
A\xl{\alpha} \check A\xr{\beta}\hat A
$$ 
where $\hat A$ is commutative. If $A$ is already commutative, then there is a natural $\I$-equivalence of commutative $\I$-space monoids $\gamma\co\hat A\to A$ such that $\alpha=\gamma\beta$.
\end{theorem}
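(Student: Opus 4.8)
The plan is to prove this as a change-of-operad rectification. Write $\mathbb E$ for the free $\mathcal E$-$\I$-space monad on $\I\mathcal U$, so $\mathbb E(X)=\coprod_k\mathcal E(k)\times_{\Sigma_k}X^{\boxtimes k}$. Since the associativity operad is augmented over $\mathcal E$ and the commutativity operad $\comm$ is terminal, restriction of structure along $\mathcal E\to\comm$ gives a forgetful functor $U$ from commutative $\I$-space monoids to $\I\mathcal U[\mathcal E]$, and $U$ has a left adjoint $\mathbb P$ (the ``strictification''); on a free algebra it is given by $\mathbb P(\mathbb E(X))=\coprod_k X^{\boxtimes k}/\Sigma_k$, the free commutative $\I$-space monoid on $X$. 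First I would choose, functorially in $A$, a cellular $\mathcal E$-$\I$-space $\check A$ with an $\I$-equivalence $\alpha\colon\check A\to A$; here ``cellular'' means built from the initial object by iterated pushouts of maps of the form $\mathbb E(C)\to\mathbb E(D)$ with $C\to D$ a generating positive cofibration of $\I\mathcal U$, as produced by the small-object argument in $\I\mathcal U[\mathcal E]$. Then I would set $\hat A=\mathbb P(\check A)$, which is commutative by construction, and let $\beta\colon\check A\to\mathbb P(\check A)$ be the unit of the adjunction. It remains to see that $\beta$ is an $\I$-equivalence for cellular $\check A$.

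The key input is that the \emph{positive} model structure makes $\boxtimes$-powers homotopically free: if $X$ is positively cofibrant, then for every $k$ the quotient map
$$
E\Sigma_k\times_{\Sigma_k}X^{\boxtimes k}\longrightarrow X^{\boxtimes k}/\Sigma_k
$$
is an $\I$-equivalence, the point being that in positive levels $\Sigma_k$ acts freely on the cells of $X^{\boxtimes k}$ (this is the $\I$-space analogue of the familiar statement for symmetric spectra with the flat model structure). In particular $\beta$ is an $\I$-equivalence when $\check A=\mathbb E(X)$ is free on a positively cofibrant $X$, since then $\mathbb P(\mathbb E(X))=\coprod_k X^{\boxtimes k}/\Sigma_k$ and the unit is the coproduct of the displayed maps. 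For a general cellular $\check A$ I would induct over the cells. Given a pushout
$$
\begin{CD}
\mathbb E(C) @>>> \check A'\\
@VVV @VVV\\
\mathbb E(D) @>>> \check A''
\end{CD}
$$
with $C\to D$ a positive cofibration, $\check A''$ is a sequential colimit of cofibrations over $\check A'$ whose successive subquotients are built from $\Sigma_k$-orbits of $\boxtimes$-products of copies of $\check A'$, $D$ and $D/C$; the functor $\mathbb P$ admits an entirely parallel filtration on $\mathbb P(\check A'')$, and $\beta$ matches the two. By the free case together with the pushout-product behaviour of $\boxtimes$ in the positive model structure, $\beta$ is an $\I$-equivalence on each subquotient, hence on $\check A''$; passing up the cellular tower and then to the colimit gives the claim for every cellular $\check A$. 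I expect this last step — the filtration bookkeeping — to be the main obstacle: one has to write out the ``$n$-th filtration'' of the pushout of $\mathcal E$-algebras and of the corresponding pushout of commutative $\I$-space monoids, identify them compatibly, and check at each stage that a homotopy orbit maps by an $\I$-equivalence to the corresponding strict orbit, which forces repeated appeals to positivity (this is the $\I$-space counterpart of the rectification arguments for $E_{\infty}$ ring spectra).

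For the last assertion, suppose $A$ is already a commutative $\I$-space monoid. Then $A=U(A)$ is in particular an $\mathcal E$-$\I$-space and $\alpha\colon\check A\to A$ is a map of $\mathcal E$-algebras; let $\gamma\colon\hat A=\mathbb P(\check A)\to A$ be its adjoint transpose under $\mathbb P\dashv U$, which is a map of commutative $\I$-space monoids. The triangle identity gives $\alpha=\gamma\circ\beta$, and two-out-of-three shows that $\gamma$ is an $\I$-equivalence because $\alpha$ and $\beta$ are. Finally, functoriality of the cellular replacement, of $\mathbb P$, and of the adjunction unit makes the whole chain natural in $A$.
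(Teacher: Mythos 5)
The paper does not actually prove Theorem \ref{Irectification}: it is imported from \cite{Sch3} (listed as ``in preparation''), and the only guidance given is the remark immediately following the statement, namely that one can either adapt the method of \cite{EM} for symmetric spectra or run a bar-construction argument as in \cite{May}. Your proposal is a correct execution, in outline, of the first of these two options, so it cannot be said to diverge from ``the paper's proof''---there is none to diverge from. Two points deserve flagging, since they are precisely the content that \cite{Sch3} has to supply and are not free: (a) you need the positive model structure (or at least a functorial cellular approximation with the right weak equivalences) on $\I\mathcal U[\mathcal E]$, not just on commutative $\I$-space monoids as in Proposition \ref{commutativemodel}; and (b) the key lemma that $E\Sigma_k\times_{\Sigma_k}X^{\boxtimes k}\to X^{\boxtimes k}/\Sigma_k$ is an $\I$-equivalence for positively cofibrant $X$ is exactly the $\I$-space analogue of the symmetric-spectrum statement and is the real theorem here; everything else (the cell induction, the filtration of a pushout along a free map, the adjoint transpose $\gamma$ and the triangle identity) is standard bookkeeping, as you acknowledge. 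It is worth noting what the alternative route buys: taking $\check A=|B_\bullet(\mathbb E,\mathbb E,A)|$ and $\hat A=|B_\bullet(\mathbb C,\mathbb E,A)|$ (with $\mathbb C$ the free commutative monoid monad) makes $\alpha$ an equivalence by an extra degeneracy, makes functoriality and the map $\gamma$ (via $B_\bullet(\mathbb C,\mathbb E,A)\to B_\bullet(\mathbb C,\mathbb C,A)\to A$ when $A$ is commutative) completely transparent, and replaces your cellular induction by a levelwise application of the same key lemma (b) plus Reedy/properness control of the realizations; the trade-off is that one must still arrange positive cofibrancy of the simplicial levels, so neither route avoids input (b). Your argument for the final assertion ($\gamma$ from the adjunction, two-out-of-three) is correct.
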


In fact there are different options for such a rectification process: one may apply a method similar to that used for symmetric spectra in \cite{EM} or one may use a bar construction type argument 
as in \cite{May}. 
 
\subsection{Commutative $\I$-space monoids and $E_{\infty}$ spaces}\label{Iinfinite}
There are several good ways in which one can associate to a commutative $\I$-space monoid 
$A$ an infinite loop space. For our purposes it is convenient to apply the construction from 
\cite{Sch1}, Section 5.2, that to $A$ associates a special $\Gamma$-space $A_{h\I}$ whose underlying space $A_{h\I}(S^0)$ is the (unbased) homotopy colimit of $A$ over $\I$.  Here we use the term \emph{special $\Gamma$-space} in the sense of \cite{BF}, that is, $A_{h\I}$ is a 
$\Fin_*$-diagram such that $A_{h\I}(*)$ is a one-point space and the canonical map from 
$A_{h\I}(X\vee Y)$ to $A_{h\I}(X)\times A_{h\I}(Y)$ is a weak homotopy equivalence. We briefly recall the definition of $A_{h\I}$, referring to \cite{Sch1} for details. The first observation is that the $\Fin$-diagram $\I\langle-\rangle$ from Section \ref{invariantsection} extends to a 
$\Fin_*$-diagram $\I_*\langle-\rangle$ in the sense that there is a commutative diagram of functors
$$
\xymatrix{
\Fin\ar[rr]^{(-)_+}\ar[dr]_{\I\langle-\rangle} & & \Fin_* \ar[dl]^{\I_*\langle-\rangle}\\
& \Cat &
}
$$
where $(-)_+$ is the functor that to an unbased set adjoins a disjoint base point. 
Indeed, given a based set $X$ one defines $\I_*\langle X\rangle$ to be the category $\I\langle \bar X\rangle$ where $\bar X$ denotes the unbased set obtained by removing the base point from 
$X$ (this is the category denoted $\mathcal D(X)$ in \cite{Sch1}). Given a commutative 
$\I$-space monoid $A$, let $A\langle X\rangle$  be the $\I_*\langle X\rangle$-diagram defined by the composition 
$$
A\langle X\rangle\co \I_*\langle X\rangle\xr{\pi_{\bar X}}\I(\bar X)
\xr{\prod_{x\in \bar X}A}\mathcal U,\quad \theta\mapsto \textstyle\prod_{x\in \bar X}A(\theta_x),
$$
and write $A'_{h\I}(X)$ for the unbased homotopy colimit
$$
A_{h\I}'(X)=\hocolim_{\I\langle X\rangle}A\langle X\rangle.
$$  
For a (not necessarily finite) based set $X$ we then define $A_{h\I}(X)$ to be the homotopy colimit 
$$
A_{h\I}(X)=\hocolim_{Y\subseteq X}A'_{h\I}(Y)
$$
over the finite based subsets $Y$ in $X$ and we finally extend this levelwise to a functor
$
A_{h\I}\co\Simp_*\to \mathcal U.
$
It is proved in \cite{Sch1}, Proposition 5.3, that this construction defines a special $\Gamma$-space and that consequently $\Omega A_{h\I}(S^1)$ is a group completion of the topological monoid $A_{h\I}(S^0)$. We say that the commutative $\I$-space monoid $A$ is \emph{grouplike} if $A_{h\I}(S^0)$ is a grouplike topological monoid. By definition \cite{BF} this is equivalent to the $\Gamma$-space $A_{h\I}$ being \emph{very special}.   
Let us write $\mathbf A$ for the associated $S$-module with $n$th space 
$A_{h\I}(S^n)$. This is a positive $\Omega$-spectrum in the sense that the adjoint structure maps are weak homotopy equivalences except possibly in degree 0. Let 
$\mathcal U[\mathcal E]$ be the category of spaces equipped with an action of the Barratt-Eccles operad. 

\begin{proposition} \label{GammaEinfty}
The $\Simp_*$-diagram $A_{h\I}$ lifts to a functor
$A_{h\I}\co\Simp_*\to \mathcal U[\mathcal E]$.
\end{proposition}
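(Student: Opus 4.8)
The plan is to upgrade the construction of $A_{h\I}$ to carry an $\mathcal E$-action at every stage by exhibiting the relevant structure maps and checking the operad axioms, exactly as was done for $\Lambda(T,M)$ in the proof of Proposition \ref{EinftyLambda}. The input is that $A$ is a commutative $\I$-space monoid, hence in particular an $\mathcal E$-$\I$-space (the multiplication provides the operad action through the augmentation of the associativity operad, or more to the point the commutative multiplication gives an honest action of every $\mathcal E(k)$ via the projection $\mathcal E(k)\to *$). First I would treat the case of a based set $X$ and show that the homotopy colimit $A_{h\I}(X)=\hocolim_{Y\subseteq X}A'_{h\I}(Y)$ inherits an $\mathcal E$-action. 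As in Section \ref{multiplicativesection}, the category $\mathcal F_X$ of finite based subsets of $X$ is strict symmetric monoidal under union, and it suffices by the homotopy-colimit multiplicativity machinery (the space-level analogue of Corollary \ref{BEhocolim}) to produce an $\mathcal E$-action on the $\mathcal F_X$-diagram $Y\mapsto A'_{h\I}(Y)$ in the symmetric monoidal category $\mathcal F_X\mathcal U$ of functors $\mathcal F_X\to\mathcal U$.

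The key geometric input is the same concatenation functor $\prod_{i=1}^n\I\langle Y_i\rangle\to\I\langle Y\rangle$, $(\theta_1,\dots,\theta_n)\mapsto\theta_1\sqcup\dots\sqcup\theta_n$ (with $Y=\bigcup_i Y_i$) used in (\ref{thetasqcup}), together with its extension over the translation category $\tilde\Sigma_n$ of the symmetric groups; on the diagram values one uses the maps
$$
\prod_{i=1}^n\,\prod_{y\in Y_i}A(\theta_i\{y\})\xr{\ \sim\ }\prod_{y\in Y}\,\prod_{\{i:\,y\in Y_i\}}A(\theta_i\{y\})\to\prod_{y\in Y}A(\theta_1\sqcup\dots\sqcup\theta_n\{y\}),
$$
where the first arrow permutes factors and the second is induced by the $\I$-space monoid multiplication of $A$ (the structure maps $A(m)\times A(n)\to A(m+n)$); commutativity of $A$ guarantees independence of the orderings chosen, just as in (\ref{Einftymultiplication}). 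Feeding $(\sigma_0,\dots,\sigma_k)\in\Sigma_n^{k+1}$ and the simplicial description of $\mathcal E(n)$ through this, exactly mirroring the displayed formulas in the proof of Proposition \ref{EinftyLambda}, one obtains the natural transformations $\mathcal E(n)\times A'_{Y_1,h\I}\times\dots\times A'_{Y_n,h\I}\to A'_{Y,h\I}$ of $\mathcal F_X^n$-diagrams satisfying unitality, associativity and equivariance. A based map $f\co X\to X'$ induces a strict symmetric monoidal functor $\mathcal F_X\to\mathcal F_{X'}$ compatible with these transformations, so $A_{h\I}(X)\to A_{h\I}(X')$ is an $\mathcal E$-map; passing to a based simplicial set $X$, the correspondence $[k]\mapsto A_{h\I}(X_k)$ is a simplicial object in $\mathcal U[\mathcal E]$, and since operad actions pass through realization we get the $\mathcal E$-action on $A_{h\I}(X)$, natural in $X$.

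The step I expect to be the main obstacle—really the only non-formal point—is verifying that the homotopy-colimit multiplicativity machinery of Appendix \ref{appendix} applies in the unbased space setting $\mathcal U$ rather than in $\Sp^{\Sigma}$; concretely one needs the space-level analogue of Corollary \ref{BEhocolim}, namely that $\hocolim$ over a strict symmetric monoidal indexing category sends $\mathcal E$-diagrams to $\mathcal E$-spaces. Since the constructions in Appendix \ref{appendix} are formulated so as to work in any suitable symmetric monoidal category of ``modules'', and $(\mathcal U,\times)$ qualifies, this is a matter of citing the appendix in the right generality rather than reproving anything; the bookkeeping of orderings and the $\tilde\Sigma_n$-equivariance is routine and identical in form to Section \ref{multiplicativesection}. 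One should also note for definiteness that, because $A$ is already commutative, the resulting $\mathcal E$-action on $A_{h\I}(X)$ restricts along $\Sigma_n\to\mathcal E(n)$ to the strictly commutative multiplication coming directly from the $\boxtimes$-monoid structure, so the lift is compatible with the naive multiplicative structure on $A_{h\I}$ used earlier. \qed
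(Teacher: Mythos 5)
Your proposal is correct and follows essentially the same route as the paper: view $A'_{h\I}$ as an $\mathcal F_X$-diagram, build the operadic structure maps from the concatenation functor (in its $\I_*\langle-\rangle$ form) and the commutative multiplication of $A$, and invoke the space-level homotopy-colimit machinery. The one worry you flag—whether the appendix applies in $\mathcal U$—is already resolved in the paper, which states the space-level analogue explicitly as Corollary \ref{spaceEinfty} in Section \ref{spacehocolim}.
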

 
\begin{proof}
We shall again deduce this from the general machinery in Appendix \ref{appendix}.
The definition of the $\mathcal E$-action is similar to that used in the proof of Proposition 
\ref{EinftyLambda}. Thus, given a based set $X$ we again let $\mathcal F_X$ be the category of finite based subsets $Y$ in $X$ and view $A_{h\I}'$ as an $\mathcal F_X$-diagram, 
$Y\mapsto A_{h\I}'(Y)$. Using Corollary \ref{spaceEinfty} we must then show that $\mathcal E$ acts on $A_{h\I}'$ as an object in the symmetric monoidal category of functors from $\mathcal F_X$ to 
$\mathcal U$. As in the $S$-module analogue  such an action amounts to a compatible sequence of natural transformations of $\mathcal F_X^n$-diagrams
$$
\mathcal E(n)\times A_{h\I}'(Y_1)\times \dots\times A_{h\I}'(Y_n)\to
A_{h\I}'(Y).
$$  
Here $Y_1,\dots,Y_n$ denotes a family of finite based subsets of $X$ with union $Y$. For this we observe that there is an $\I_*\langle-\rangle$-analogue of (\ref{thetasqcup}) and an associated natural transformation of $\prod_{i=1}^n\I_*\langle Y_i\rangle$-diagrams
$$
\prod_{i=1}^nA\langle Y_i\rangle(\theta_i)\cong \prod_{y\in Y}\prod_{\{i:y\in Y_i\}}A(\theta_i\{y\})
\to A\langle Y\rangle(\theta_1\sqcup\dots\sqcup \theta_n).
$$ 
Using these structure maps the construction of the action proceeds exactly as in the proof of Proposition \ref{EinftyLambda}.
\end{proof} 
 
 It follows from this that a commutative $\I$-space monoid $A$ gives rise to an $S$-module with 0th space $A_{h\I}(S^0)$ in two different ways: on the one hand we have the $S$-module 
 $\mathbf A$ arising from the $\Gamma$-space structure and on the other hand we have the $S$-module arising from the $E_{\infty}$ structure. We want to show that these $S$-modules are in fact equivalent.  Recall from \cite{May} and \cite{MM}, Chapter I.8,  that May has defined a functor $\mathcal U[\mathcal E]\to\Sp^{\Sigma}$ which to an $E_{\infty}$ space $Z$ 
associates an $S$-module $\mathbf BZ$ whose 0th space equals $Z$ and whose $n$th space $B_nZ$ is a certain bar construction on $Z$ defined using the operad structure. Applying this construction to $A_{h\I}$ we obtain a symmetric bispectrum $\mathbf B\mathbf A$ with 
$(m,n)$th space $B_mA_{h\I}(S^n)$. Since $A_{h\I}$ is special it follows that this is a positive 
$\Omega$-bispectrum in the sense that the adjoint structure maps are weak equivalences except in bidegree $(0,0)$. This in turn implies that the chain of maps
\begin{align*}
A_{h\I}(S^n)&\to\hocolim_{m_1,m_2}\Omega^{m_1+m_2}(B_{m_1}A_{h\I}(S^{m_2+n}))\\
&\leftarrow 
\hocolim_{m_1,m_2}\Omega^{m_1+m_2}(B_{m_1}A_{h\I}(S^{m_2})\wedge S^n)\\
&\to
\hocolim_{m_1,m_2}\Omega^{m_1+m_2}(B_{m_1+n}A_{h\I}(S^{m_2}))
\leftarrow B_nA_{h\I}(S^0)
\end{align*}
 is a chain of weak equivalences for $n$ positive. Viewing each of these maps as a map of $S$-modules we get the required stable equivalence between the two $S$-modules associated to 
 $A$.

\subsection{The symmetric Thom spectrum functor}\label{symmetricthomsection}
In this section we briefly recall the basic properties of the symmetric Thom spectrum functor. The main reference for this material is the paper \cite{Sch2} which in turn is based in part on the foundational work by Lewis \cite{LMS}. Let $F(n)$ be the topological monoid of based homotopy equivalences of $S^n$ and let $BF(n)$ be its classifying space. We write 
$V(n)$ for the one-sided bar construction $B(*,F(n),S^n)$ and $p_n\co V(n)\to BF(n)$ for the canonical quasifibration. Following Lewis \cite{LMS}, Section IX, the Thom space functor assigns to a map $f\co X\to BF(n)$ the based space $T(f)=f^*V(n)/X$
obtained by first pulling $V(n)$ back via $f$ and then forming the quotient by $X$. Defined in this way the Thom space construction is a functor from the category $\mathcal U/BF(n)$ of spaces over $BF(n)$ to the category of based spaces. Since quasifibrations are not in general preserved under pullbacks this is not a homotopy functor on the full category $\mathcal U/BF(n)$. In order to identify the "correct" homotopy type of the Thom space associated to an object $f$ as above we again follow Lewis and consider the endofunctor $\Gamma$ on $\mathcal U/BF(n)$ that to $f$ associates the usual Hurewicz fibration $\Gamma(f)\co\Gamma_f(X)\to BF(n)$, where 
$$
\Gamma_f(X)=\{(x,\omega)\in X\times BF(n)^{\I}\co f(x)=\omega(0)\}
$$
and $\Gamma(f)$ maps a point $(x,\omega)$ to $\omega(1)$. It is proved in \cite{LMS} that the composite functor $T\circ\Gamma$ is a homotopy functor and we say that an object 
$f$ in 
$\mathcal U/BF(n)$ is \emph{$T$-good} if the canonical equivalence $X\to \Gamma_f(X)$ induces a weak homotopy equivalence  of Thom spaces. Thus, the Thom space functor is a homotopy functor on the full subcategory of $T$-good objects in $\mathcal U/BF(n)$. The objects that occur naturally in the applications are usually $T$-good; in particular this is the case for an object that is a Hurewicz fibration and for any object factoring through the classifying space $B\mathit{Top}(n)$ for the group of base point preserving homeomorphisms of $S^n$. 

We now turn to symmetric Thom spectra. It is well-known that the correspondence 
$\mathbf n\mapsto BF(n)$ defines a commutative $\I$-space monoid with monoidal structure  induced by the monoid homomorphisms $F(m)\times F(n)\to F(m+n)$ that map a pair of equivalences to their smash product, see e.g.\ \cite{Sch2}. We write $\I\mathcal U/BF$ for the category of $\I$-spaces over $BF$ with its induced symmetric monoidal structure. 
The symmetric Thom spectrum functor 
\begin{equation}\label{symthom}
T\co \I\mathcal U/BF\to \Sp^{\Sigma}
\end{equation}
is defined by levelwise application of the Thom space functor: given a map of $\I$-spaces $f\co A\to BF$, the Thom $S$-module $T(f)$ has as its $n$th space the Thom space $T(f_n)$. It is proved in \cite{Sch2} that $T$ is a strong symmetric monoidal functor, hence in particular that it takes commutative monoids (that is, maps of commutative 
$\I$-space monoids $A\to BF$) to commutative $S$-algebras. We define an endofunctor 
$\Gamma$ on $\I\mathcal U/BF$ by applying the fibrant replacement functor considered above at each level and we say that an object $f$ is \emph{$T$-good} if the canonical map of $S$-modules $T(f)\to T(\Gamma(f))$ is a stable equivalence. In particular, $\Gamma(f)$ is always 
$T$-good and it follows from \cite{LMS}, Proposition 1.11, that the $S$-module $T(\Gamma(f))$ is well-based. 
Notice also that $\Gamma$ induces an endo\-functor on the category of commutative monoids in $\I\mathcal U/BF$. 
 The following theorem from \cite{Sch2} implies that the symmetric Thom spectrum functor is a homotopy functor on the full subcategory of $T$-good objects in $\I\mathcal U/BF$.

\begin{theorem}[\cite{Sch2}]\label{Thominvariance}
The symmetric Thom spectrum functor takes $\I$-equivalences between $T$-good objects in 
$\I\mathcal U/BF$ to stable equivalences of $S$-modules.\qed
\end{theorem}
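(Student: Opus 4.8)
The plan is to reduce to the case of levelwise Hurewicz fibrations and then detect the stable equivalence by passing to homotopy colimits over $\I$, where the classical homotopy invariance of the Lewis--May Thom spectrum functor takes over.

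First I would use the endofunctor $\Gamma$ on $\I\U/BF$ to reduce to the case where $f$ and $g$ are levelwise Hurewicz fibrations. Since $\Gamma$ applies the path-space fibrant replacement in each level, the natural map $f\to\Gamma(f)$ is a levelwise homotopy equivalence and hence an $\I$-equivalence, so $\Gamma$ preserves $\I$-equivalences; moreover $T(\Gamma(f))$ is well-based, and $T(f)\to T(\Gamma(f))$ is a stable equivalence because $f$ is assumed $T$-good (and likewise for $g$). By naturality of $\Gamma$ and the two-out-of-three property it therefore suffices to show that $T$ takes an $\I$-equivalence between levelwise Hurewicz fibrations to a stable equivalence. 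Levelwise homotopy invariance of the Thom space functor \cite{LMS} already disposes of the $\I$-equivalences that happen to be levelwise equivalences, so the real content is to upgrade from levelwise to $\I$-equivalences.

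Second --- the main step --- I would detect the stable equivalence with the functor $D=\hocolim_{\I}F_*(-)$ of \cite{Sh}. For a well-based $S$-module $M$ there is a natural chain of stable equivalences $DM=\hocolim_{\I}F_*(M)\simeq M$, so it is enough to show that $D(T(f))\to D(T(g))$ is a stable equivalence. Now $D(T(f))$ is assembled from the symmetric spectra $F_S(S^m,T(f_m))$, and each of these is the $m$-fold desuspension of the suspension spectrum of the Thom space $T(f_m)$, i.e. the Lewis--May Thom spectrum of the rank-$m$ fibration $A(m)\to BF(m)$ viewed over $BF_{h\I}$. Taking the homotopy colimit over $\I$ (and over the poset of finite based subsets) and commuting the cocontinuous, homotopy-invariant Thom spectrum construction past these homotopy colimits should identify $D(T(f))$ with the Lewis--May Thom spectrum of the induced map $f_{h\I}\co A_{h\I}\to BF_{h\I}$ of homotopy-colimit spaces. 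Granting this identification, the hypothesis that $f\to g$ is an $\I$-equivalence says exactly that $A_{h\I}\to B_{h\I}$ is a weak equivalence over $BF_{h\I}$, and the theorem follows from the classical homotopy invariance of the Lewis--May Thom spectrum functor on fibrations.

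The hard part is the identification in the second step: producing a \emph{natural} chain of stable equivalences from $D(T(f))$ to the Lewis--May Thom spectrum of $f_{h\I}$. This involves (i) commuting the fibrewise Thom space and collapse constructions past the homotopy colimit over $\I$ and over the finite based subsets while keeping everything well-based; (ii) matching the symmetric-spectrum structure maps of $T(f)$ with the sequential structure maps of the Lewis--May Thom spectrum, which means tracking how the monoidal structure of $\I$ and the maps $F(m)\times F(n)\to F(m+n)$ making $BF$ an $\I$-space monoid interact with smashing by spheres; and (iii) the connectivity estimates --- in the spirit of Lemma \ref{Ifibrationconvergence} --- needed to identify $\hocolim_{\I}(\mathbf n\mapsto BF(\mathbf n))$ with the ordinary sequential colimit underlying the classical theory. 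An alternative route that sidesteps (ii)--(iii) is to factor an arbitrary $\I$-equivalence of levelwise Hurewicz fibrations through $\I$-fibrant replacements in $\I\U/BF$ and use that an $\I$-equivalence between $\I$-fibrant objects is automatically a levelwise equivalence, so that Lewis's invariance applies directly; the obstacle on this route is to check that the $\I$-fibrant replacement can be carried out within the well-based, $T$-good objects without changing the stable homotopy type of $T(-)$.
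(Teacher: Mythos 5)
You should first be aware that the paper does not actually prove this theorem: it is imported verbatim from \cite{Sch2} (where it is one of the main results), so the ``proof'' here is a citation. Judged on its own terms, your opening reduction is correct: $\Gamma$ preserves $\I$-equivalences because $X\to\Gamma_f(X)$ is a natural homotopy equivalence, $T$-goodness gives the stable equivalences $T(f)\simeq T(\Gamma(f))$ and $T(g)\simeq T(\Gamma(g))$ with well-based targets, and Lewis's results dispose of $\I$-equivalences that happen to be levelwise equivalences of Hurewicz fibrations. So the theorem does come down to upgrading from levelwise to $\I$-equivalences.

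The genuine gap is your second step, and it is not a removable technicality: the identification of $D(T(f))=\hocolim_{\I}F_S(S^n,T(f_n))$ with the Lewis--May Thom spectrum of $f_{h\I}$ is essentially equivalent to the statement being proved (it subsumes the comparison of the symmetric and Lewis--May Thom spectrum functors, which is a separate substantial theorem in \cite{Sch2}, and your appeal to the Thom functor preserving homotopy colimits is Blumberg's theorem \cite{Bl}, of comparable depth). Concretely, the sequential indexing category is not homotopy cofinal in $\I$, so $\hocolim_{\I}F_S(S^n,T(f_n))$ can only be matched with the telescope $\hocolim_{n}\Sigma^{-n}\Sigma^{\infty}T(f_n)$ underlying the Lewis--May construction when the diagram satisfies B\"okstedt's approximation condition, i.e.\ when $A$ is convergent. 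The standard way to arrange convergence is to replace $f\co A\to BF$ by a positive $\I$-fibration $A'\to BF$ (Lemma \ref{Ifibrationconvergence}); but $A\to A'$ is precisely the kind of $\I$-equivalence whose effect on $T$ the theorem is meant to control, so this is circular --- and your alternative route through $\I$-fibrant replacement founders on the same point, as you half-acknowledge. What is needed to break the circle is a direct argument for such replacement maps, not available from \cite{Sh} or \cite{LMS} as black boxes: in \cite{Sch2} this is done with the connectivity estimates for the Thom space functor (a $c$-connected map over $BF(n)$ induces a roughly $(c+n)$-connected map of Thom spaces) together with a careful analysis of how the $\Sigma_n$-actions on the sphere coordinates and on the Thom spaces interact, so that the relevant colimits over $\I$ become computable. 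Without an input of this kind your outline reduces the theorem to a statement of the same strength rather than proving it.
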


We also want to associate Thom $S$-modules to space level data and for this purpose it is convenient to use the homotopy colimit $BF_{h\I}$ of $BF$ as a model for the classifying space for spherical fibrations (thus, in the notation used above we write $BF_{h\I}$ for the space $BF_{h\I}(S^0)$). It is proved in \cite{Sch2} that the homotopy colimit functor from $\I\mathcal U/BF$ to $\mathcal U/BF_{h\I}$ has a homotopy inverse
$$
R\co \mathcal U/BF_{h\I}\to \I\mathcal U/BF, \quad (B\xr{f}BF_{h\I})\mapsto (R_f(B)\xr{R(f)}BF)
$$
with values in the subcategory of $T$-good objects. 
Composing with the Thom spectrum functor (\ref{symthom}) therefore gives a homotopically well-behaved symmetric Thom spectrum functor on 
$\mathcal U/BF_{h\I}$. Furthermore, $BF_{h\I}$ has a canonical action of the Barratt-Eccles operad and it follows from \cite{Sch2} that the functor $R$ 
takes $E_{\infty}$ spaces over $BF_{h\I}$ to $E_{\infty}$ $\I$-spaces over $BF$.
There are now two obvious ways to assign a commutative Thom $S$-algebra to an 
$E_{\infty}$ map $B\to BF_{h\I}$. On the one hand we could apply the symmetric Thom spectrum functor on $\mathcal U/BF_{h\I}$ to get an $E_{\infty}$ $S$-algebra and then use the rectification procedure \cite{EM} on the level of $E_{\infty}$ $S$-algebras to turn  this into a strictly commutative $S$-algebra. On the other hand we could use the rectification procedure in Theorem \ref{Irectification} to the $E_{\infty}$ $\I$-space monoid $R_f(B)$ and then  
apply the symmetric Thom spectrum functor (\ref{symthom}) to the associated commutative 
$\I$-space monoid $\hat R_f(B)$ over $BF$. It is the second construction that is most convenient for our purpose so we turn it into a definition.

\begin{definition}\label{commThom}
Let $f\co B\to BF_{h\I}$ be a map of $E_{\infty}$ spaces. The commutative Thom $S$-algebra $T(f)$ is defined by applying the symmetric Thom spectrum functor (\ref{symthom}) to the map of commutative $\I$-space monoids $\hat R_f(B)\to \hat BF\xr{\gamma} BF$.
\end{definition} 

A comparison of the symmetric Thom spectrum described here to the Lewis-May Thom spectrum functor from \cite{LMS} can be found in \cite{Sch2}.

\section{Higher topological Hochschild homology of Thom spectra}\label{highthomsection}
It is illuminating to first sketch a proof of Theorem \ref{Einftyhighthom} ignoring all technical details. Thus, let $f\co B\to BF_{h\I}$ be a map of $E_{\infty}$ spaces with $B$ grouplike, and let us use the same notation $f\co A\to BF$ for the associated map of commutative $\I$-space monoids defined as in Definition \ref{commThom}. Applying the $\I$-space analogue of the Loday functor to the commutative $\I$-space monoid $A$ we get a commutative diagram
$$
\xymatrix{
\mathcal L_X(A,A)\ar[rr] \ar[dr] & & \mathcal L_*(A)\times \mathcal L_X(A,*) \ar[dl]\\
& BF &
}
$$
in $\I\mathcal U$. The first component of the upper horizontal map is induced by the projection $X\to *$ and the second component is induced by the projection $A\to *$ onto the terminal $\I$-space. Identifying $\mathcal L_*(A)$ with $A$, the map on the right is defined by first projecting onto the first factor and then applying $f$. One can check that the upper horizontal map is an $\I$-equivalence if $A$ is cofibrant and since the symmetric Thom spectrum functor is strong symmetric monoidal and commutes with topological realization (see \cite{Sch2}) we can identify the Thom spectrum $T(\mathcal L_X(A,A))$ with the Loday construction $\mathcal L_X(T(f),T(f))$.
Using Theorem \ref{Thominvariance} we would therefore expect to get a chain of stable equivalences of commutative $S$-algebras
$$
\mathcal L_X(T(f),T(f))\simeq T(\mathcal L_X(A,A))\simeq T(\mathcal L_*(A)\times \mathcal L_X(A,*))\simeq T(f)\wedge \mathcal L_X(A,*)_+
$$
and it is believable that the last term can be identified with 
$T(f)\wedge\Omega^{\infty}(\mathbf A\wedge X)_+$. Given a $T(f)$-module $M$, the result should then follow by applying the functor 
$M\wedge_{T(f)}(-)$. However, when trying to make this argument precise one encounters several technical difficulties. First of all one needs $A$ and $T(f)$ to be cofibrant in order to control the homotopy type of the Loday functors but unfortunately the Thom $S$-algebra associated to a cofibrant $\I$-space $A$ need not be cofibrant. It is also not clear that the $T$-goodness condition for an object in $\mathcal \I\mathcal U/BF$ is preserved under cofibrant replacement. (The latter difficulty is caused by the technical subtlety that whereas Hurewicz cofibrations are preserved under pullback along Hurewicz fibrations, the behavior under pullback along Serre fibrations is not well understood). It is for these reasons, along with the equivariant applications in future work, that we prefer to work with the homotopy invariant model $\Lambda_X(T(f),M)$. Using this model the argument involves no point-set topological considerations at the price of being slightly more involved combinatorially. 
We shall deduce Theorem \ref{Einftyhighthom} from the following $\I$-space analogue. 
Recall that a commutative $\I$-space monoid $A$ is grouplike if the homotopy colimit 
$A_{h\I}(S^0)$ is a grouplike topological monoid. 
\begin{theorem}\label{Ispacehighthom}
Let $f\co A\to BF$ be a $T$-good map of commutative $\I$-space monoids and let $T(f)$ be the associated commutative Thom $S$-algebra. If $A$ is grouplike, then there is a chain of stable equivalences of $S$-modules
$$
\Lambda_X(T(f),M)\simeq M\wedge A_{h\I}(X)_+
$$
for any based simplicial set $X$ and any well-based $T(f)$-module $M$. If $M$ is a commutative $T(f)$-algebra, then this is an equivalence of $E_{\infty}$ $S$-algebras.  
\end{theorem}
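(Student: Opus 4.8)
The plan is to produce a chain of natural stable equivalences of $\Fin_*$-diagrams and then pass to realizations. The inputs I would use are: that the symmetric Thom spectrum functor $T\co\I\mathcal U/BF\to\Sp^{\Sigma}$ is strong symmetric monoidal, commutes with homotopy colimits and realization, and takes $\I$-equivalences between $T$-good objects to stable equivalences (Theorem~\ref{Thominvariance}); the detection-functor equivalence $\hocolim_{\I}F_*(M)\simeq M$ from~(\ref{shipleyequi}) together with the standard Thom-isomorphism-type untwisting that trivializes a Thom spectrum after smashing with a module over it; and, on the $\I$-space side, the $\Gamma$-space construction $A_{h\I}$ of Section~\ref{Iinfinite} together with the grouplike hypothesis. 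Using $T$-goodness I would first replace $f$ by $\Gamma(f)$ so that $T(f)$ is well-based, and throughout I would work with the functorial model $\Lambda$ and the categories $\I\langle-\rangle$ rather than with B\"okstedt's $\I(X)$-diagrams or the Loday construction directly, precisely so that every comparison map in sight is natural in $X$ (this is the reason the non-natural maps such as~(\ref{shipleyequi2}) must be replaced by their $\I\langle-\rangle$-versions, as in the proof of Proposition~\ref{LambdaLoday}).

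For a finite based set $X$ the core is to unwind the building blocks $F_X(T(f),M)(\theta)=F_S\bigl(\bigwedge_{x\in X}S(\theta_x),\,\bigwedge_{x\in X}T_x(\theta_x)\bigr)$ of $\Lambda'_X(T(f),M)$. Using the pairing on the $F_S$-spectra (a stable equivalence since the source is a sphere) I would separate off the basepoint smash factor $F_S(S(\theta_*),M(\theta_*))$ and, via the lemma comparing $\hocolim_{\I\langle X\rangle}$ with $\hocolim_{\I(X)}$ followed by a Fubini argument, take the homotopy colimit over the $\theta_*$-variable first: by~(\ref{shipleyequi}) this contributes $M$. What remains is the homotopy colimit over the $\I(\bar X)$-variables of $F_S(\bigwedge_{x\neq *}S(\theta_x),\bigwedge_{x\neq*}T(f)(\theta_x))$, which by strong symmetric monoidality of $T$ is the Thom spectrum of $\boxtimes_{x\in\bar X}A$ over $BF$; smashing with $M$ and invoking the $T(f)$-module structure then untwists this Thom spectrum to $M\wedge\bigl((\boxtimes_{x\in\bar X}A)_{h\I}\bigr)_+$. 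Since $A$ is grouplike, $(\boxtimes_{x\in\bar X}A)_{h\I}$ is exactly the $\Gamma$-space value $A_{h\I}(X)$ of Section~\ref{Iinfinite} (for finite $X$ this is $A_{h\I}(S^0)^{\times\bar X}$ by the special-$\Gamma$-space identity). Assembling the homotopy colimit over the finite based subsets of $X$ and realizing levelwise over $\Simp_*$ gives $\Lambda_X(T(f),M)\simeq M\wedge A_{h\I}(X)_+$; here I would note that the $\Lambda$-model is built entirely from homotopy colimits, so the simplicial $S$-modules occurring are levelwise well-based and realization preserves the equivalences.

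I expect the main obstacle to be precisely this untwisting step, and it is also where the hypothesis that $A$ is grouplike is consumed: one must compare the Thom spectrum of the twisted $\I$-space Loday construction $\mathcal L_X(A,A)$ with the product form $\mathcal L_*(A)\boxtimes\mathcal L_X(A,*)$ over $BF$, identify the second factor with the $\Gamma$-space $A_{h\I}$, and carry out the Thom-isomorphism untwisting of the first factor compatibly with all the structure maps of the diagram, and with naturality in $X$. For $X=S^1$ this recovers the known identification of $\Th(T(f),M)$ with $M\wedge BA_+$; for general $X$ one needs the iterated version, and it is exactly to make this go through uniformly without cofibrancy hypotheses on $A$ or $T(f)$, and without having to control $T$-goodness under cofibrant replacement in $\I\mathcal U/BF$, that one works with $\Lambda_X$ rather than $\mathcal L_X$.

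For the final statement, suppose $M$ is a commutative $T(f)$-algebra. Then $\Lambda_X(T(f),M)$ carries the Barratt--Eccles action of Proposition~\ref{EinftyLambda}, while $M\wedge A_{h\I}(X)_+$ is an $\mathcal E$-algebra because $M$ is commutative and $A_{h\I}(X)$ is an $\mathcal E$-space by Proposition~\ref{GammaEinfty}. I would revisit the chain of equivalences above and upgrade it to a chain of maps of $\mathcal E$-algebras: for each finite based $X$, exhibit each diagram occurring in it as an $\mathcal E$-object in the symmetric monoidal functor category $\mathcal F_X\Sp^{\Sigma}$ (respectively in the analogous category of $\mathcal F_X$-diagrams of spaces), exactly as in the proofs of Propositions~\ref{EinftyLambda} and~\ref{GammaEinfty}, and exhibit each comparison map as an $\mathcal E$-map of such diagrams; Corollary~\ref{BEhocolim} and its space-level analogue then make the induced maps of homotopy colimits $\mathcal E$-maps, and passing to realizations over $\Simp_*$ preserves this. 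The extra points to check are that the detection-functor equivalence and the Thom-isomorphism untwisting are multiplicative, and that the symmetric Thom spectrum functor, being strong symmetric monoidal, carries the $E_{\infty}$ structure on the $\I$-space side to the Barratt--Eccles structure used here; granting these, the entire chain lives in $\Sp^{\Sigma}[\mathcal E]$, which gives the asserted equivalence of $E_{\infty}$ $S$-algebras.
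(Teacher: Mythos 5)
Your outline has the right overall architecture, and it matches the paper's: work with $\Lambda$ rather than $\mathcal L$ so that every comparison is natural in $X$, build a zig-zag of $\Fin_*$-natural stable equivalences and realize levelwise, identify the $T$-factors via strong symmetric monoidality of the Thom functor, consume grouplikeness in an untwisting, and feed the whole chain through the Appendix machinery for the $E_{\infty}$ statement. But the proposal defers exactly the step where all the content lies. The untwisting is not a formal Thom-isomorphism citation: in the paper it is Lemma \ref{M=Tlemma}, which (i) first replaces $A$ by a convergent, positively fibrant model (Proposition \ref{commutativemodel} and Lemma \ref{Ifibrationconvergence}) so that B\"okstedt's approximation lemma applies, and then (ii) shows that the shearing map $\prod_{x\in X}A(\theta_x)\to A(\theta_X)\times\prod_{x\in\bar X}A(\theta_x)$ is highly connected by a triangular-matrix argument on homotopy groups --- this is the precise point where the grouplike hypothesis enters, to make $\pi_0$ and $\pi_1$ into groups so the matrix is invertible. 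Crucially, that argument only works for $M=T$, since it takes place entirely in $\I\mathcal U/BF$ where $M$ has no avatar; the general case is then deduced by smashing with $\Lambda_*(M)^c$ over $\Lambda_*(T)^c$ (Lemma \ref{Lambdatensor}). Your plan instead untwists $M\wedge T(\boxtimes_{x\in\bar X}A)$ directly for arbitrary $M$; any attempt to justify that forces you back to the $M=T$ case plus a base-change argument, neither of which appears in the proposal. (Also, the identification of $(\boxtimes_{x\in\bar X}A)_{h\I}$ with $A_{h\I}(X)$ uses only that $A_{h\I}$ is special, not grouplikeness.)

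The second gap is the ``Fubini'' step. Splitting off $F_S(S(\theta_*),M(\theta_*))$ and taking the homotopy colimit over the $\theta_*$-variable first presupposes the decomposition $\I(X)\cong\I\times\I(\bar X)$, but $\I(X)$ is not functorial in $X$ (Remark \ref{Bokstedtremark}), and, worse, the resulting object ``$M$ smashed with the rest'' is not a $\Fin_*$-diagram: a based map collapsing a non-basepoint to the basepoint multiplies a $T$-factor into the $M$-factor and destroys the splitting. The paper circumvents this with the auxiliary functor $F_*(A\langle X\rangle,M)(j,\theta)=F_S(S(j),M(j)\wedge A\langle X\rangle(\theta)_+)$ on $\I\times\I_*\langle X\rangle$ --- note that the $M$-coordinate is evaluated at $\theta_X$, not at $\theta_*$ --- and only separates $M$ from $A'_{h\I}(X)_+$ at the last stage via the map (\ref{F(M)map}), which points in the direction that is actually natural. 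So the ingredients you list are the right ones, but the zig-zag as arranged would not assemble over $\Simp_*$, and the central equivalence is asserted rather than proved.
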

For the last statement we recall from Proposition \ref{GammaEinfty} that the $\Simp_*$-diagram $A_{h\I}$ takes values in the category of $E_{\infty}$ spaces. 

\medskip
\noindent\textit{Proof of Theorem \ref{Einftyhighthom}.}
Starting with a map of $E_{\infty}$ spaces $f\co B\to BF_{h\I}$ we proceed as in Definition 
\ref{commThom} and write $f\co A\to BF$ for the associated map of commutative $\I$-space monoids. Thus, in the notation from Theorem \ref{Irectification}, $A$ equals the commutative 
$\I$-space monoid $\hat{R}_g(B)$ and it follows from Corollary \ref{spaceEinfty} that we have a chain of weak homotopy equivalences of $E_{\infty}$ spaces
$$
\hocolim_{\I}A\simeq\hocolim_{\I}R_f(B)\simeq B.
$$
The last equivalence is derived from the fact that the functor $R$ is a homotopy inverse of the homotopy colimit functor, see \cite{Sch2}. It follows that if the $E_{\infty}$ space $B$ is grouplike, then so is the commutative $\I$-space monoid $A$. We conclude from the discussion at the end of Section \ref{Iinfinite}, that the $S$-module arising from the operad action on $B$ is stably equivalent to the $S$-module $\mathbf A$ obtained from the special $\Gamma$-space 
$A_{h\I}$. By definition, the commutative $S$-algebra $T(f)$ in Theorem 
\ref{Einftyhighthom} is the symmetric Thom spectrum functor (\ref{symthom}) applied to the map $f$ above. It is implicit in the statement of Theorem \ref{Einftyhighthom} that when applying the Loday functor we should first choose a flat replacement $T(f)^c$ of $T(f)$ and a flat replacement $M^c$ of $M$ as a $T(f)^c$-module. It then follows from Propositions \ref{invarianceprop} and \ref{LambdaLoday} that there are stable equivalences
$$
\mathcal L_X(T(f)^c,M^c)\simeq \Lambda_X(T(f)^c,M^c)\simeq \Lambda_X(T(f),M)
$$  
which are equivalences of $E_{\infty}$ $S$-algebras if $M$ is a commutative $T$-algebra. (As a technical note we remark that we may assume without loss of generality that $T(f)$ is well-based by applying the fibrant replacement functor $\Gamma$ from Section \ref{symmetricthomsection} to $f$ if necessary). Using Theorem \ref{Ispacehighthom} it therefore remains to identify $A_{h\I}(X)$ with $\Omega^{\infty}(\mathbf A\wedge X)$. Let us write $\mathbf A(S\wedge X)$ for the 
$S$-module with $n$th space $A(S^n\wedge X)$. As remarked above, the assumption that $B$ be grouplike implies that $A_{h\I}(S^0)$ is a grouplike topological monoid. Hence the $\Gamma$-space $A_{h\I}$ is very special and consequently $\mathbf A(S\wedge X)$ is an $\Omega$-spectrum, see \cite{BF}, Theorem 4.2. The conclusion now follows from   
\cite{BF}, Theorem 4.1, which states that the canonical map of $S$-modules 
$\mathbf A\wedge X\to \mathbf A(S\wedge X)$ is a stable equivalence.\qed

\medskip
We now describe the chain of maps in Theorem \ref{Ispacehighthom} and write $T$ for the commutative $S$-algebra $T(f)$. By naturality of the constructions it suffices to define a chain of natural transformations of $\Fin_*$-diagrams. For this purpose we introduce the auxiliary functor 
$$
F_*(A\langle X\rangle,M)\co \I\times \I_*\langle X\rangle\to\Sp^{\Sigma}
$$
which to a finite based set $X$ and a pair of objects $j$ in $\I$ and $\theta$ in 
$\I_*\langle X\rangle$ associates the $S$-module
$$
F_*(A\langle X\rangle,M)(j,\theta)=F_S(S(j),M(j)\wedge A\langle X\rangle(\theta)_+).
$$
In this way we have defined a functor $\Fin_*\to\mathcal D(\Sp^{\Sigma})$ and we write 
$\Lambda'(A,M)$ for the composite functor obtained by evaluating the homotopy colimit,
$$
\Lambda'(A,M)\co \Fin_*\to \Sp^{\Sigma},\quad \Lambda_X'(A,M)=
\hocolim_{\I\times \I_*\langle X\rangle}F_*(A\langle X\rangle,M).
$$
Consider the natural transformation of $\Fin_*$-diagrams
$$
p_X\times q_X\co \I\langle X\rangle\to\I\times \I_*\langle X\rangle,
$$
where $p_X$ is induced by the projection $X\to *$ (identifying $\I$ with $\I\langle *\rangle$) and $q_X$ is defined by first identifying $\I\langle X\rangle$ with $\I_*\langle X_+\rangle$ and then applying the based map $X_+\to X$ that is the identity on $X$. We shall extend this to a natural transformation of $\Fin_*$-diagrams in $\mathcal D(\Sp^{\Sigma})$,
$$
(\I\langle X\rangle, F_X(T,M)\circ\pi_X)\to (\I\times\I_*\langle X\rangle,F_*(A\langle X\rangle,M)),
$$
hence we need a natural transformation 
\begin{equation}\label{pqtransformation}
F_X(T,M)\circ\pi_X\to F_*(A\langle X\rangle,M)\circ p_X\times q_X
\end{equation}
of $\I\langle X\rangle$-diagrams for each finite based set $X$. For this purpose we first fix an object $\theta$ in $\I\langle X\rangle$ and observe that there is a commutative diagram
$$
\begin{CD}
\displaystyle\prod_{x\in \bar X}A(\theta_x)@>>>A(\coprod_{x\in \bar X}\theta_x)\times 
\displaystyle\prod_{x\in\bar X}A(\theta_x)\\
@VVV @VVV\\
BF(\coprod_{x\in \bar X}\theta_x)@=BF(\coprod_{x\in \bar X}\theta_x)
\end{CD}
$$
where the first component of the upper map is the multiplication in $A$ and the second component is the identity. The vertical map on the right is defined by projecting onto the first factor and then applying $f$. Evaluating the Thom spaces of the vertical maps and using that 
the Thom space functor takes product to smash products, we get from this a map of based spaces
$$
\bigwedge_{x\in \bar X}T(\theta_x)\to T({\textstyle\coprod_{x\in \bar X}\theta_x})\wedge\ A\langle X\rangle(\theta)_+
$$ 
which in turn induces a map
\begin{equation}
\begin{aligned}\label{Mthommap}
M(\theta_*)\wedge\bigwedge_{x\in \bar X}T(\theta_x)
&\to M(\theta_*)\wedge 
T(\textstyle{\coprod}_{x\in \bar X}\theta_x)\wedge A\langle X\rangle(\theta)_+ \\
&\to
M(\textstyle{\coprod}_{x\in X}\theta_x)\wedge A\langle X\rangle(\theta)_+\to M(\theta_X)\wedge 
A\langle X\rangle(\theta)_+
\end{aligned}
\end{equation}
using the $T$-module structure on $M$. Letting $\theta$ vary, the induced map
$$
F_S(\bigwedge_{x\in X}S(\theta_x),\bigwedge_{x\in X}T_x(\theta_x))\to
F_S(S(\theta_X),M(\theta_X)\wedge A\langle X\rangle(\theta)_+)
$$
gives rise to the natural transformation in (\ref{pqtransformation}).

Let now $F_*(M)\wedge A\langle X\rangle_+$ be the $\I\times\I_*\langle X\rangle$-diagram in 
$\Sp^{\Sigma}$ defined by the levelwise smash products
$
F_*(M)(j)\wedge A\langle X\rangle(\theta)_+.
$
Then we have a natural transformation of $\Fin_*$-diagrams in $\mathcal D(\Sp^{\Sigma})$
$$
(\I\times \I_*\langle X\rangle, F_*(M)\wedge A\langle X\rangle_+)\to
(\I\times \I_*\langle X\rangle,F_*(A\langle X\rangle,M)),
$$
induced by the obvious natural transformation
\begin{equation}\label{F(M)map}
F_S(S(j_*),M(j_*))\wedge A\langle X\rangle(\theta)_+ \to 
F_S(S(j_*),M(j_*)\wedge A\langle X\rangle(\theta)_+)
\end{equation}
of $\I\times \I_*\langle X\rangle$-diagrams. Notice, that since homotopy colimits commute with smash products there is a canonical isomorphism
$$
\hocolim_{\I\times\I_*\langle X\rangle}F_*(M)\wedge A\langle X\rangle_+\cong
\Lambda_*(M)\wedge A_{h\I}'(X)_+.
$$
Evaluating the homotopy colimits of the above diagrams we therefore get natural transformations of $\Fin_*$-diagrams of $S$-modules
\begin{equation}\label{hocolimchain}
\Lambda'_X(T,M)\to \Lambda'_X(A,M)\leftarrow\Lambda_*(M)\wedge A_{h\I}'(X)_+.
\end{equation}
Furthermore, using Shipley's equivalence (\ref{shipleyequi}) we have the stable equivalence
\begin{equation}\label{ShipleyA}
\Lambda_*(M)\wedge A'_{h\I}(X)_+\simeq M\wedge A'_{h\I}(X)_+
\end{equation}
Finally we apply the same method used for $\Lambda(T,M)$ to extend the above functors to based simplicial sets. This completes our description of the chain of maps in Theorem \ref{Ispacehighthom}. 

\begin{lemma}
If $M$ is a commutative $T$-algebra, then the chain of maps in Theorem \ref{Ispacehighthom} is a chain of maps of $E_{\infty}$ $S$-algebras. 
\end{lemma}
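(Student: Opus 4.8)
The plan is to lift every map in the chain (\ref{hocolimchain})--(\ref{ShipleyA}) to a morphism in $\Sp^{\Sigma}[\mathcal E]$ by installing the relevant $\mathcal E$-actions already at the level of the $\mathcal F_X$-diagrams whose homotopy colimits define the terms, and then to invoke Corollary \ref{BEhocolim}. Throughout we use that $M$, being a commutative $T$-algebra, is in particular a commutative and hence an $\mathcal E$-$S$-algebra.

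First I would equip the $\mathcal F_X$-diagram $Y\mapsto\Lambda'_Y(A,M)$ with an $\mathcal E$-action in the symmetric monoidal category of functors $\mathcal F_X\to\Sp^{\Sigma}$, following the pattern of the proof of Proposition \ref{EinftyLambda}. The requisite natural transformations of $\mathcal F_X^n$-diagrams are assembled from the $\I_*\langle-\rangle$-analogue of (\ref{thetasqcup}) together with the $\boxtimes$-monoid multiplication of the commutative $\I$-space monoid $A$, which amalgamates the factors $A\langle Y_i\rangle$, and from the multiplications of $T$ and $M$, which amalgamate the mapping-spectrum factors exactly as in (\ref{Einftymultiplication}); the functor $\tilde\Sigma_n\times\prod_i\I_*\langle Y_i\rangle\to\I_*\langle Y\rangle$ records the permutations. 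For the two ends of the chain, $\Lambda_*(M)=DM$ carries an $\mathcal E$-action by Proposition \ref{EinftyLambda} applied to the one-point based set; $Y\mapsto A'_{h\I}(Y)_+$ is an $\mathcal E$-diagram by Proposition \ref{GammaEinfty} composed with the strong symmetric monoidal functor $Z\mapsto\Sigma^{\infty}Z_+$; and, since $\mathcal E$-$S$-algebras form a symmetric monoidal category under the smash product (via the diagonal of $\mathcal E$), the levelwise smash $Y\mapsto\Lambda_*(M)\wedge A'_{h\I}(Y)_+$ is again an $\mathcal E$-diagram whose homotopy colimit over $\mathcal F_X$ is $\Lambda_*(M)\wedge A_{h\I}(X)_+$, homotopy colimits commuting with smash products.

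Next I would verify that the natural transformation (\ref{pqtransformation}) and the one induced by (\ref{F(M)map}) are maps of $\mathcal E$-diagrams over $\mathcal F_X$. This is bookkeeping of the same character as in Proposition \ref{EinftyLambda}: one checks that the structure maps (\ref{Mthommap}) and (\ref{F(M)map}) commute with the $\sqcup$-operations and $\Sigma_n$-permutations entering the two $\mathcal E$-actions. The key inputs are that the Thom space functor is strong symmetric monoidal (Section \ref{symmetricthomsection}), so that it carries the coproduct reorganizations defining $d_f$, $t_f$ and (\ref{Mthommap}) to the corresponding smash reorganizations; that the multiplication of $A$ in (\ref{Mthommap}) is precisely the $\boxtimes$-monoid multiplication used to define the $\mathcal E$-action on $A\langle-\rangle$; and that the ring multiplications of $T=T(f)$ (strictly commutative, being the Thom spectrum of a commutative $\I$-space monoid) and of $M$ agree on both sides. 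Applying Corollary \ref{BEhocolim}, together with the fact (as in Proposition \ref{EinftyLambda}) that a based map $X\to X'$ induces a strict symmetric monoidal functor $\mathcal F_X\to\mathcal F_{X'}$ compatible with this structure, promotes (\ref{hocolimchain}) to a chain of morphisms in $\Sp^{\Sigma}[\mathcal E]$ natural in the finite based set $X$; since operad actions are preserved under topological realization this then extends to all based simplicial sets.

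Finally I would treat the Shipley equivalence (\ref{ShipleyA}), which amounts to showing that the detection-functor equivalence $\Lambda_*(M)=DM\simeq M$ of (\ref{shipleyequi}) can be realized by a chain of $\mathcal E$-$S$-algebra maps, after which one smashes with the $\mathcal E$-$S$-algebra $A'_{h\I}(X)_+$ and again extends to simplicial sets. I expect this to be the main obstacle: the $\mathcal E$-action on $DM$ produced above, which originates from $\I$ not being strictly symmetric, must be matched against the strictly commutative structure on $M$, and the comparison relies on the multiplicative properties of the detection functor and its relation to the identity as developed in \cite{Sh} and \cite{Sch1}. From this one extracts that each map in the zig-zag (\ref{shipleyequi}) is induced by a map of $\mathcal E$-diagrams over $\I$ and concludes, once more, via Corollary \ref{BEhocolim}. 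Concatenating the resulting chains yields the statement.
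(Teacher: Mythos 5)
Your proposal is correct and follows essentially the same route as the paper: install compatible $\mathcal E$-actions on the $\mathcal F_X$-diagrams defining each term of (\ref{hocolimchain}) by modifying the construction in Proposition \ref{EinftyLambda}, check that the structure maps (\ref{pqtransformation}) and (\ref{F(M)map}) are $\mathcal E$-maps of diagrams, apply Corollary \ref{BEhocolim}, and handle (\ref{ShipleyA}) via the symmetric monoidality of $F_*(M)$ as in Example \ref{Einftyexample}. The paper is in fact terser than your write-up (it leaves the Shipley step to the reader), so your extra detail on that point is a refinement rather than a divergence.
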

\begin{proof}
Let $X$ be a (not necessarily finite) based set and let us write $\Lambda_X(A,M)$ for the homotopy colimit of the $\mathcal F_X$-diagram $Y\mapsto\Lambda'_Y(A,M)$ where $\mathcal F_X$ is as in Proposition \ref{EinftyLambda}. Modifying the construction used in the proof of Proposition \ref{EinftyLambda} to the functor $F_Y(A,M)$ we define an $\mathcal E$-action on this $\mathcal F_X$-diagram which is compatible with the 
$\mathcal E$-action on $\Lambda'(T,M)$ in the sense that there are commutative diagrams
$$
\begin{CD}
\mathcal E(n)_+\wedge \Lambda'_{Y_1}(T,M)\wedge\dots\wedge\Lambda'_{Y_n}(T,M)@>>> 
\Lambda'_Y(T,M)\\
@VVV @VVV\\
\mathcal E(n)_+\wedge \Lambda'_{Y_1}(A,M)\wedge\dots\wedge\Lambda'_{Y_n}(A,M)@>>> 
\Lambda'_Y(A,M).
\end{CD}
$$
Here $Y_1,\dots,Y_n$ denotes a family of finite based subsets of $X$ with union $Y$. By Corollary \ref{BEhocolim}
this implies that the left hand map in (\ref{hocolimchain}) extends to a map of $E_{\infty}$ $S$-algebras. A similar argument shows that the right hand map in (\ref{hocolimchain}) extends to an $E_{\infty}$ map. Finally, it follows from the observations in Example 
\ref{Einftyexample} that Shipley's definition of the stable equivalence in (\ref{ShipleyA}) extends to an $E_{\infty}$ map; we leave the details of this with the reader. 
\end{proof}

In order to show that these maps are in fact stable equivalences we first consider the case $M=T$.

\begin{lemma}\label{M=Tlemma}
The chain of maps in (\ref{hocolimchain}) are stable equivalences for $M=T$.
\end{lemma}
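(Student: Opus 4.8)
The plan is to handle the two maps of (\ref{hocolimchain}) separately. The right-hand map $\Lambda_*(M)\wedge A'_{h\I}(X)_+\to\Lambda'_X(A,M)$ is purely formal and is in fact a stable equivalence for every well-based $M$. By construction it is the homotopy colimit over $\I_*\langle X\rangle$ of the map which, for a fixed object $\theta$, is the homotopy colimit over $\I$ of the natural transformation (\ref{F(M)map}); for fixed $\theta$ this last map is the natural map $DM\wedge A\langle X\rangle(\theta)_+\to D\bigl(M\wedge A\langle X\rangle(\theta)_+\bigr)$. Since $A\langle X\rangle(\theta)$ is a space and $M$ is well-based, $M\wedge A\langle X\rangle(\theta)_+$ is again well-based, so both sides are identified with $M\wedge A\langle X\rangle(\theta)_+$ by Shipley's equivalence (\ref{shipleyequi}) and the map is compatible with these identifications (\cite{Sh}, Proposition 3.1.9). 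As homotopy colimits over $\I_*\langle X\rangle$ preserve levelwise stable equivalences of well-based diagrams, the right-hand map is a stable equivalence, and it remains to treat the left-hand map $\Lambda'_X(T,T)\to\Lambda'_X(A,T)$.

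For the left-hand map I would run an untwisting argument based on the Thom-invariance theorem \ref{Thominvariance}; write $f\co A\to BF$ for the map of commutative $\I$-space monoids with $T=T(f)$. By definition the transformation (\ref{pqtransformation}) is the image under $F_S(S(\theta_X),-)$, using the canonical identification $\bigwedge_{x\in X}S(\theta_x)\cong S(\theta_X)$, of the space-level maps (\ref{Mthommap}) with $M=T$, namely
\[
\textstyle\bigwedge_{x\in X}T(\theta_x)\longrightarrow T(\theta_X)\wedge A\langle X\rangle(\theta)_+ .
\]
Since the Thom space functor takes products to smash products and $f$ is a monoid map, the source of this map is the Thom space of $\prod_{x\in X}A(\theta_x)\xrightarrow{\mu}A(\theta_X)\xrightarrow{f}BF(\theta_X)$, and, using the elementary identity $T(g\circ\pr)\cong T(g)\wedge(-)_+$, the target is the Thom space of $A(\theta_X)\times\prod_{x\in\bar X}A(\theta_x)\xrightarrow{\pr}A(\theta_X)\xrightarrow{f}BF(\theta_X)$; moreover (\ref{Mthommap}) is precisely the effect on Thom spaces of the shearing map
\[
\textstyle\mathrm{sh}_\theta\co\prod_{x\in X}A(\theta_x)\longrightarrow A(\theta_X)\times\prod_{x\in\bar X}A(\theta_x),\qquad (a_x)_{x\in X}\longmapsto\Bigl(\mu\bigl((a_x)_{x\in X}\bigr),\,(a_x)_{x\in\bar X}\Bigr),
\]
which, again because $f$ is a monoid map, is a map over $BF(\theta_X)$.

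The essential point is that $\mathrm{sh}_\theta$ is \emph{not} a weak equivalence for a fixed $\theta$, but becomes one after forming homotopy colimits over $\I\langle X\rangle$: the homotopy colimit functor identifies both $\hocolim_{\I\langle X\rangle}\prod_{x\in X}A(\theta_x)$ (by cofinality) and $\hocolim_{\I\langle X\rangle}\bigl(A(\theta_X)\times\prod_{x\in\bar X}A(\theta_x)\bigr)$ (after first running the homotopy colimit in the $\theta_*$-variable along the cofinal functor $(-)\sqcup(-)\co\I\to\I$) with a power of $A_{h\I}(S^0)$, in such a way that the induced self-map is an iterated shearing map of the topological monoid $A_{h\I}(S^0)$, which by hypothesis is grouplike and therefore has invertible shearing. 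Combining this with the facts that the Thom spectrum functor is compatible with homotopy colimits and with the detection functor $D$ (\cite{Sch2}) — so that it suffices to know (\ref{Mthommap}) becomes an equivalence of spaces after $\hocolim_{\I\langle X\rangle}$ (Bökstedt's approximation together with Shipley's comparison of $\hocolim_{\I\langle X\rangle}F_S(S(\theta_X),-)$ with $D$) — and that it carries $\I$-equivalences between $T$-good objects to stable equivalences (Theorem \ref{Thominvariance}; here one applies the fibrant replacement $\Gamma$ of Section \ref{symmetricthomsection} wherever necessary to stay inside $T$-good objects), one concludes that $\hocolim_{\I\langle X\rangle}$ of (\ref{pqtransformation}) is a stable equivalence. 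The remaining comparison $\hocolim_{\I\langle X\rangle}\bigl(F_*(A\langle X\rangle,T)\circ(p_X\times q_X)\bigr)\to\Lambda'_X(A,T)$, induced by $p_X\times q_X$ on indexing categories, is a stable equivalence by a cofinality argument of the type in \cite{Sh}, Proposition 4.2.3. Finally everything is natural in the finite based set $X$, so the statement passes to simplicial sets by realization.

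The hard part will be this middle step, where the grouplike hypothesis is used: since the shearing maps $\mathrm{sh}_\theta$ are not levelwise weak equivalences, the hypothesis cannot be applied fibrewise, and one must carefully orchestrate the order in which the Thom spectrum functor, the homotopy colimits over $\I\langle X\rangle$ and $\I\times\I_*\langle X\rangle$, the function-spectrum functors $F_S(S(\theta_X),-)$, and the cofinality statements are brought to bear — so that the relevant homotopy colimits, which turn the $\I$-space $A$ into the grouplike monoid $A_{h\I}(S^0)$, are formed before Theorem \ref{Thominvariance} is invoked on the shearing equivalence.
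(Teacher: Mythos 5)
Your handling of the right-hand map of (\ref{hocolimchain}) is essentially correct in spirit (the paper gets it by first replacing $A$ by a convergent model, so that $T$ is connective and convergent and (\ref{F(M)map}) is a stable equivalence for each object of the indexing category; your route through the detection functor amounts to the same thing). The left-hand map, however, contains a genuine gap, and it sits exactly where you flag ``the hard part'': the orchestration you propose cannot be carried out in that order. To form the homotopy colimit over $\I\langle X\rangle$ \emph{before} the function-spectrum construction $F_S(S(\theta_X),-)$ --- which is what applying the grouplike hypothesis only at the level of $A_{h\I}(S^0)$ requires --- you must commute a homotopy colimit past a mapping spectrum, and the only tool for that is B\"okstedt's approximation lemma, whose input is precisely a fibrewise connectivity estimate on the maps (\ref{Mthommap}) with connectivity tending to infinity in the cardinalities of the $\theta_x$. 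So you cannot escape a levelwise statement about $\mathrm{sh}_\theta$; declaring that the hypothesis ``cannot be applied fibrewise'' and then postponing it until after the colimit makes the argument circular.

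The two missing ingredients are these. First, one must reduce to the case where $A$ is convergent: factor $A\to BF$ as an $\I$-equivalence followed by a positive $\I$-fibration using the model structure of Proposition \ref{commutativemodel}, and apply Lemma \ref{Ifibrationconvergence} together with the convergence of $BF$; Theorem \ref{Thominvariance} and the homotopy invariance of $A'_{h\I}(X)$ justify this replacement. Without convergence B\"okstedt's lemma does not apply at all. Second, although $\mathrm{sh}_\theta$ is indeed not a weak equivalence, it \emph{is} $\lambda_n$-connected when each $\theta_x$ has cardinality at least $n$, for an unbounded non-decreasing sequence $\lambda_n$, and this is where grouplikeness enters fibrewise: on homotopy groups $\mathrm{sh}_\theta$ is represented by a triangular matrix whose diagonal entries are $\pi_k(i)$ (with $i$ induced by the inclusion of the basepoint factor) and the identity of $\pi_k A'$; convergence makes $\pi_k(i)$ an isomorphism in a range growing with $|\theta_*|$, and convergence plus grouplikeness guarantee that $\pi_0$ and $\pi_1$ of the relevant spaces are abelian groups, so that the triangular matrix is invertible in that range. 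The connectivity properties of the Thom space functor then transfer this estimate to (\ref{Mthommap}), and B\"okstedt's approximation lemma converts it into a levelwise equivalence of homotopy colimits. With this in place no shearing argument on $A_{h\I}(S^0)$ itself is needed.
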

\begin{proof}
Since both functors in the theorem take $\I$-equivalences to stable equivalences of $S$-modules by Theorem \ref{Thominvariance} and the definition of $A_{h\I}'(X)$, we may without loss of generality assume that $A$ is convergent. Indeed, using the model structure on commutative 
$\I$-space monoids from Proposition \ref{commutativemodel}, any map $A\to BF$ may be factored as an $\I$-equivalence $A\to A'$ followed by a positive $\I$-fibration $A'\to BF$. Since $BF$ is convergent it follows from Lemma \ref{Ifibrationconvergence} that $A'$ is convergent as well.  As usual we may also assume that $f$ is $T$-good by applying the fibrant replacement functor $\Gamma$ if necessary, see Section \ref{symmetricthomsection}. For the right hand map in (\ref{hocolimchain}) we observe that by \cite{Sch2} the convergence assumption on $A$ implies that $T$ is connective and convergent as an $S$-module, hence it follows by standard arguments that the map in (\ref{F(M)map}) is a stable equivalence for each object in the indexing category. By the homotopy invariance of homotopy colimits, see e.g.\  \cite{Sh}, Lemma 4.1.5, this implies the result for this map. 
For the left hand side of (\ref{hocolimchain}) we first observe that the convergence of $A$ and $T$ ensure that 
B\" okstedt's approximation lemma applies such that the homotopy groups of the homotopy colimits are approximated by the homotopy groups of the individual terms, see  \cite{M}, Lemma 2.3.7. We claim that there exists an unbounded, non-decreasing sequence of integers $\lambda_n$ such that if each $\theta_x$ has cardinality greater than or equal to $n$, then the connectivity of the map
$$
\bigwedge_{x\in X}T_x(\theta_x)\to T(\theta_X)\wedge A\langle X\rangle_+,
$$
that is, the map (\ref{Mthommap}) for $M=T$, 
is greater than or equal to $\lambda_n+\Sigma_{x\in X}|\theta_x|$. Assuming this, it follows from B\"okstedt's approximation lemma that the natural transformation in 
(\ref{pqtransformation}) induces a levelwise equivalence of homotopy colimits. Since the Thom space functor takes products to smash products, the above map can be identified with the map of Thom spaces induced by the commutative diagram
\begin{equation}\label{Asplitting}
\begin{CD}
\displaystyle\prod_{x\in X}A(\theta_x)@>>> A(\theta_X)\times\displaystyle
\prod_{x\in \bar X}A(\theta_x)\\
@VVV @VVV\\
BF(\coprod_{x\in X}\theta_x)@>>> BF(\theta_X).
\end{CD}
\end{equation}
The first factor of the upper horizontal map is the composition
$$
\prod_{x\in X}A(\theta_x)\to A(\textstyle\coprod_{x\in X}\theta_x)\to A(\theta_X)
$$
and the second factor is the projection onto the components indexed by the points in 
$\bar X$. By the well-known connectivity properties of the Thom space functor 
(see e.g.\ \cite{Sch2}), 
it therefore suffices that there exists a sequence $\lambda_n$ such that the upper horizontal 
map in the diagram $(\ref{Asplitting})$ is at least 
$\lambda_n$-connected. Writing $A'$ for the product of the spaces 
$A(\theta_x)$ for $x\in\bar X$, the induced map of homotopy groups is represented by a triangular matrix of the form
$$
\left[\begin{array}{cc}
\pi_k(i)&*\\
0&\pi_k(\mathrm{id}_{A'}) 
\end{array}\right]\co \pi_kA(\theta_*)\times \pi_kA'\to \pi_kA(\theta_X)
\times \pi_kA',
$$
where $i$ is induced by the inclusion of the base point in $X$. Here we use that $A$ is convergent and grouplike to ensure that the homotopy groups $\pi_kA(\theta_x)$ are abelian groups also for $k=0$ and $k=1$. The convergence assumption implies that the connectivity of $i$ tends to infinity with the cardinality of $\theta_*$, hence the claim follows.  
\end{proof}

For the next lemma we recall from Section \ref{multiplicativesection} that if $T$ is a commutative $S$-algebra, then $\Lambda_*(T)$ is an $S$-algebra and  $\Lambda_X(T,T)$ is a 
$\Lambda_*(T)$-algebra. A similar argument shows that $\Lambda_*(M)$ is a (left and right) 
$\Lambda_*(T)$-module if $M$ is a $T$-module. Thus, if 
$\Lambda_*(T)^c\to \Lambda_*(T)$ denotes a cofibrant replacement of $\Lambda_*(T)$ as an $S$-algebra, then we may also view $\Lambda_X(T,T)$ and $\Lambda_*(M)$  as $\Lambda_*(T)^c$-modules. The following result is the analogue in our setting of the isomorphism in Remark
\ref{tensorremark}. 

\begin{lemma}\label{Lambdatensor}
Let $T$ be a well-based commutative $S$-algebra and $M$ a well-based $T$-module. Then there is a stable equivalence
$$
\Lambda_*(M)^c\wedge_{\Lambda_*(T)^c}\Lambda_X(T,T)\to \Lambda_X(T,M),
$$
where $\Lambda_*(T)^c$ is a cofibrant replacement of $\Lambda_*(T)$ and $\Lambda_*(M)^c$ is a cofibrant replacement of $\Lambda_*(M)$ as a right $\Lambda_*(T)^c$-module.
\end{lemma}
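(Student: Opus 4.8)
The plan is to reduce to the case where $T$ is a cofibrant commutative $S$-algebra and $M$ a cofibrant $T$-module, and there to recognise the map as the isomorphism of Remark~\ref{tensorremark}. Unwinding the definitions, $\Lambda_*(T)$ and $\Lambda_*(M)$ are B\"okstedt's detection functors $DT=\hocolim_{\I}F_*(T)$ and $DM=\hocolim_{\I}F_*(M)$. The first step is to check that both the source and the target of the asserted map are homotopy functors of the pair $(T,M)$ of well-based objects. For the target this is Proposition~\ref{invarianceprop}. For the source one applies Proposition~\ref{invarianceprop} to $DT$, to $DM$ and to $\Lambda_X(T,T)$, and combines this with the standard fact that a stable equivalence of $S$-algebras induces a Quillen equivalence of the associated module categories under which derived relative smash products correspond. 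Since the map in the lemma is natural in $(T,M)$, this reduces us to the case of a cofibrant commutative $S$-algebra $T$ and a cofibrant $T$-module $M$.

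For such $T$ and $M$, Proposition~\ref{LambdaLoday} gives stable equivalences $\Lambda_X(T,T)\simeq\mathcal L_X(T,T)$ and $\Lambda_X(T,M)\simeq\mathcal L_X(T,M)$, while the detection-functor equivalence~(\ref{shipleyequi}) gives $DT\simeq T$ and $DM\simeq M$. The next step is to check that these chains can be chosen compatibly with the relevant algebra and module structures: $DT\simeq T$ as $S$-algebras, $DM\simeq M$ as modules over $DT$ respectively $T$, and $\Lambda_X(T,T)\simeq\mathcal L_X(T,T)$ as algebras over $DT$ respectively $T$ (the algebra structure on $\Lambda_X(T,T)$ coming from the canonical map $\Lambda_*(T)\to\Lambda_X(T,T)$). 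One extracts this from the lax monoidality of the detection functor and from the naturality built into the proofs of~(\ref{shipleyequi}) and of Proposition~\ref{LambdaLoday}. Under the resulting compatible equivalences the map of the lemma corresponds to the natural map
$$
M\wedge_T\mathcal L_X(T,T)\lra\mathcal L_X(T,M)
$$
that multiplies the adjoined $M$-factor into the basepoint smash factor of $\mathcal L_X(T,T)=X\otimes T$; since $M$ is a cofibrant $T$-module this smash product already represents the derived relative smash product, and the displayed map is precisely the natural isomorphism of Remark~\ref{tensorremark}. This completes the argument.

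I expect the real work to lie in the compatibility statement of the second paragraph, since Proposition~\ref{LambdaLoday} is recorded only as a bare chain of stable equivalences; one has to revisit its proof, and Shipley's results in~\cite{Sh}, to see that the intermediate maps respect the action of $DT=\Lambda_*(T)$ on $\Lambda_X(T,T)$ and on $DM$, bearing in mind that $DT$ is only an associative $S$-algebra and $D$ only lax monoidal. A more self-contained alternative, which avoids the passage through the Loday construction, would be to identify $\Lambda_*(M)^c\wedge_{\Lambda_*(T)^c}\Lambda_X(T,T)$ with the two-sided bar construction $B(\Lambda_*(M),\Lambda_*(T),\Lambda_X(T,T))$ and to exhibit directly a levelwise stable equivalence from this simplicial object to the constant simplicial object on $\Lambda_X(T,M)$, using that $F_X(T,M)\circ\pi_X$ is built from $F_X(T,T)\circ\pi_X$, $F_*(M)$ and $F_*(T)$ by multiplying into the basepoint factor; this trades the structural bookkeeping above for combinatorial bookkeeping with the indexing categories $\I\langle Y\rangle$ together with B\"okstedt-type approximation arguments as in the proof of Lemma~\ref{M=Tlemma}.
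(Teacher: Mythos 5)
Your overall strategy---reduce to cofibrant $T$ and $M$ by homotopy invariance and then land on the isomorphism of Remark~\ref{tensorremark} via Proposition~\ref{LambdaLoday}---points in the same direction as the paper, but the decisive step is precisely the one you defer to "the real work," and as written this is a genuine gap rather than a routine verification. To transport the map $\Lambda_*(M)^c\wedge_{\Lambda_*(T)^c}\Lambda_X(T,T)\to\Lambda_X(T,M)$ onto $M\wedge_T\mathcal L_X(T,T)\to\mathcal L_X(T,M)$ you need every map in the zigzags of (\ref{shipleyequi}) and Proposition~\ref{LambdaLoday} to be a module map over a corresponding zigzag of $S$-algebra maps, together with derived base change at each stage; neither result is stated with any such structure, $\Lambda_*(T)=DT$ is only associative and $D$ only lax monoidal, and you do not actually carry out the extraction. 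A referee could not accept "one extracts this from the naturality built into the proofs" for the step on which the whole lemma rests.

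The paper's proof is organized so that no structured comparison across these zigzags is ever needed. After reducing to finite based $X$ (smashing with the cofibrant module $\Lambda_*(M)^c$ commutes with the homotopy colimits and realizations defining $\Lambda_X$), it inserts the intermediate object $\bigwedge_{x\in X}\Lambda_x(T_x)^c$ and shows that the canonical map $\bigwedge_{x\in X}\Lambda_x(T_x)^c\to\Lambda'_X(T,M)$ is a stable equivalence; here Propositions~\ref{invarianceprop} and~\ref{LambdaLoday} are used only as unstructured equivalences of $S$-modules, comparing both sides with $\bigwedge_{x\in X}T_x=\mathcal L_X(T,M)$ for cofibrant $T$ and $M$. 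The whole point of this object is that the relative smash product is then computed by an actual isomorphism, $\Lambda_*(M)^c\wedge_{\Lambda_*(T)^c}\bigwedge_{x\in X}\Lambda_x(T)^c\cong\bigwedge_{x\in X}\Lambda_x(T_x)^c$, because the $\Lambda_*(T)^c$-module structure is concentrated in the basepoint smash factor; the only module-theoretic input is that the map $\bigwedge_{x\in X}\Lambda_x(T)^c\to\Lambda'_X(T,T)$ is a $\Lambda_*(T)^c$-module map, which is immediate from its construction. To repair your argument you should either carry out in full the structured compatibilities you postponed, or adopt an intermediate object of this kind; your bar-construction alternative in the last paragraph is closer in spirit to the latter but is likewise only a sketch.
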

\begin{proof}
The map in question is induced by the composite map
$$
\Lambda_*(M)^c\wedge \Lambda_X(T,T)\to \Lambda_*(M)\wedge \Lambda_X(T,T)\to
\Lambda_X(T,M)
$$
where the last map is defined using the $T$-module structure on $M$. Smashing with the 
$\Lambda_*(T)^c$-module $\Lambda_*(M)^c$ preserves homotopy colimits and topological realization so that, by definition of $\Lambda_X(T,M)$, it suffices to prove the lemma when $X$ is a finite based set. For this we first show that the canonical map
$$
\bigwedge_{x\in X}\Lambda_x(T_x)^c\to
\bigwedge_{x\in X}\Lambda_x(T_x)\to \Lambda'_X(T,M)
$$
is a stable equivalence. Here $\Lambda_x(T_x)^c$ again denotes a cofibrant replacement and as usual $T_x=T$ for $x\neq *$  and $T_*=M$. Indeed, since both sides are homotopy invariant in the $T$ and $M$ variables by Proposition \ref{invarianceprop}, we may assume without loss of generality that $T$ and $M$ are cofibrant. It then follows from the definition of the stable equivalence in Proposition \ref{LambdaLoday} that there is a commutative diagram in the stable homotopy category
$$
\begin{CD}
\displaystyle{\bigwedge_{x\in X}}\Lambda_x(T_x)^c
@>>>\Lambda_X'(T,M)\\
@VV\sim V  @VV \sim V\\
\displaystyle{\bigwedge_{x\in X}}T_x@=\mathcal L_X(T,M)
\end{CD}
$$   
where the vertical maps are stable equivalences and the bottom horizontal map is an isomorphism. The upper horizontal map is therefore also an equivalence as claimed. Furthermore, in the case $M=T$ we may view the latter as an equivalence of $\Lambda_*(T)^c$-modules. Since smashing with the cofibrant module $\Lambda_*(M)^c$ preserves stable equivalence, we get a commutative diagram
$$
\begin{CD}
\Lambda_*(M)^c\wedge_{\Lambda_*(T)^c}\displaystyle{\bigwedge_{x\in X}}\Lambda_x(T)^c@>>>
\displaystyle{\bigwedge_{x\in X}}\Lambda_x(T_x)^c\\
@VV\sim V @VV\sim V\\
\Lambda_*(M)^c\wedge_{\Lambda_*(T)^c}\Lambda'_X(T,T)@>>>\Lambda'_X(T,M)
\end{CD}
$$  
in which the vertical maps are stable equivalences and the upper horizontal map is an isomorphism. This concludes the proof. 
\end{proof}

\medskip
\noindent\textit{Proof of Theorem \ref{Ispacehighthom}.}
It suffices to prove that the maps in (\ref{hocolimchain}) are stable equivalences for each finite based set $X$; the general case then follows from standard results on topological realization of (good) simplicial $S$-modules, see e.g.\ \cite{Sh}, Corollary 4.1.6. Let again $\Lambda_*(T)^c$ be a cofibrant replacement of $\Lambda_*(T)$ as an $S$-algebra and $\Lambda_*(M)^c$ a cofibrant replacement of $\Lambda_*(M)$ as a $\Lambda_*(T)^c$-module. We may then view the stable equivalences in Lemma \ref{M=Tlemma} as stable equivalences of 
$\Lambda_*(T)^c$-modules and we get a commutative diagram 
$$
\begin{CD}
\Lambda_*(M)^c\wedge_{\Lambda_*(T)^c}\Lambda_X'(T,T)@>\sim >> \Lambda_*(M)^c
\wedge_{\Lambda_*(T)^c}\Lambda_*(T)\wedge A'_{h\I}(X)_+\\
@VV\sim V @VV\sim V\\
\Lambda'_X(T,M) @>>> \Lambda_*(M)\wedge A'_{h\I}(X)_+,
\end{CD}
$$
where the bottom horizontal arrow represents the chain of maps in question and the vertical maps are the stable equivalences from Lemma \ref{Lambdatensor}. The chain of maps represented by the upper horizontal arrow are stable equivalences by Lemma \ref{M=Tlemma} and the fact 
that smash products with cofibrant modules preserve stable equivalences. Therefore the bottom arrow also represents a stable equivalence. \qed

\subsection{Homotopy orbits and Borel homology}\label{homotopyorbit}
Evaluating the functor $\Lambda(T)$ on the simplicial $n$-torus $(S^1)^n$ gives a model of the iterated topological Hochschild homology spectrum $\Th^{(n)}\!(T)$ and since 
$\Lambda_{(S^1)^n}(T)$ is the realization of an $n$-fold multi-cyclic set it inherits an action of the topological $n$-torus $\mathbb T^n$. In particular, $\Lambda_{S^1}(T)$ is a model of the topological Hochschild homology spectrum $\Th(T)$ with a canonical $\mathbb T$-action. Non-equivariantly, 
$\Lambda_{S^1}(T)$ is equivalent to B\"okstedt's model of $\Th(T)$ but this is far from true equivariantly: the finite cyclic groups $C_n$ act freely on $\Lambda_{S^1}(T)$ whereas it is the rich structure of the $C_n$-fixed point spectra in B\"okstedt's model that allows for the construction of topological cyclic homology \cite{BHM}. 
In future work we shall remedy this by introducing a refined version of the indexing categories 
$\I\langle X\rangle$ which in particular leads to a definition of higher topological cyclic homology. 
 In this paper we restrict attention to the associated homotopy orbit spectra and for this there is no problem since in general an equivariant map of symmetric spectra which is a non-equivariant stable equivalence induces a stable equivalence of homotopy orbits. We recall that for a group $G$ acting on an $S$-module $T$, the homotopy orbit spectrum is the $S$-module
 $T_{hG}=EG_+\wedge_G T$. For a $G$-space $B$, the homotopy orbit space is defined by 
 $B_{hG}=EG\times_G B$. Let $A$ be a commutative $\I$-space monoid and let
 $A_{h\I}$ be the associated  $\Gamma$-space introduced in Section \ref{Iinfinite}.
\begin{lemma}\label{freetorus}
If $A_{h\I}(S^0)$ is grouplike then there is a weak homotopy equivalence
$$
A_{h\I}((S^1)^n_+)_{hG}\xr{\sim} \Map(\mathbb T^n,A_{h\I}(S^n))_{hG}
$$ 
for any compact subgroup $G$ of $\mathbb T^n$. 
\end{lemma}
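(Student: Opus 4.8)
The plan is to identify both sides, $\mathbb T^n$-equivariantly, with the infinite loop space $\Omega^{\infty}(\mathbf A\wedge\mathbb T^n_+)$ for the action of $\mathbb T^n$ on $\mathbb T^n$ by translation, and then apply the Borel construction $(-)_{hG}$. For the left hand side I would argue as in the proof of Theorem~\ref{Einftyhighthom}: the hypothesis that $A_{h\I}(S^0)$ be grouplike makes $A_{h\I}$ very special, so the $S$-module $\mathbf A(S\wedge X)$ with $n$th space $A_{h\I}(S^n\wedge X)$ is an $\Omega$-spectrum (\cite{BF}, Theorem~4.2) which receives a stable equivalence from $\mathbf A\wedge X$ (\cite{BF}, Theorem~4.1); this produces a chain of weak equivalences $A_{h\I}(X)\simeq\Omega^{\infty}(\mathbf A\wedge X)$ natural in the based simplicial set $X$. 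Evaluating at $X=(S^1)^n_+$, and using that the $\mathbb T^n$-action on $A_{h\I}((S^1)^n_+)$ induced by the multicyclic structure realizes to the translation action on $\mathbb T^n_+$, naturality upgrades this to a $\mathbb T^n$-equivariant chain $A_{h\I}((S^1)^n_+)\simeq\Omega^{\infty}(\mathbf A\wedge\mathbb T^n_+)$; this is the space-level counterpart of the equivariant equivalence~(\ref{equivariantstable}).

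For the right hand side, note that $A_{h\I}(S^n)$ is the $n$th space of the positive $\Omega$-spectrum $\mathbf A$, hence weakly equivalent to $\Omega^{\infty}\Sigma^n\mathbf A$ for $n\geq 1$, so that $\Map(\mathbb T^n,A_{h\I}(S^n))$ is, $\mathbb T^n$-equivariantly for the translation action on the domain, weakly equivalent to $\Omega^{\infty}F(\Sigma^{\infty}\mathbb T^n_+,\Sigma^n\mathbf A)$, with $F$ the function spectrum. Now $\mathbb T^n$ with the translation action is the homogeneous space $\mathbb T^n/e$, a closed parallelizable manifold whose framing can be chosen translation-invariant; by the equivariant Wirthm\"uller isomorphism (equivalently, equivariant Atiyah duality) its equivariant Spanier--Whitehead dual is $D(\Sigma^{\infty}\mathbb T^n_+)\simeq\Sigma^{-n}\Sigma^{\infty}\mathbb T^n_+$, the desuspension being by the trivial $n$-dimensional representation. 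Since $\Sigma^{\infty}\mathbb T^n_+$ is dualizable this gives $F(\Sigma^{\infty}\mathbb T^n_+,\Sigma^n\mathbf A)\simeq D(\Sigma^{\infty}\mathbb T^n_+)\wedge\Sigma^n\mathbf A\simeq\mathbf A\wedge\mathbb T^n_+$ $\mathbb T^n$-equivariantly --- the $S^n$ appearing in $\Map(\mathbb T^n,A_{h\I}(S^n))$ is exactly what cancels the degree shift coming from duality --- and applying $\Omega^{\infty}$ yields a $\mathbb T^n$-equivariant chain $\Map(\mathbb T^n,A_{h\I}(S^n))\simeq\Omega^{\infty}(\mathbf A\wedge\mathbb T^n_+)$.

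Combining the two equivalences above produces a $\mathbb T^n$-equivariant chain of weak equivalences of $\mathbb T^n$-spaces between $A_{h\I}((S^1)^n_+)$ and $\Map(\mathbb T^n,A_{h\I}(S^n))$; restricting the action to the compact subgroup $G$ and applying $EG\times_G(-)$, which preserves $G$-equivariant weak equivalences, gives the asserted map. The step I expect to require the most care is the equivariant bookkeeping: one should carry out the duality argument in a point-set model of $\mathbb T^n$-spectra (or, since only Borel homotopy orbits enter at the end, in the Borel framework), realize the duality equivalence as a genuinely $\mathbb T^n$-equivariant zig-zag using a translation-invariant framing of $T\mathbb T^n$, and verify that the $\mathbb T^n$-action on $A_{h\I}((S^1)^n_+)$ coming from the multicyclic structure corresponds, through all the identifications, to the translation action on $\mathbb T^n_+$. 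The underlying non-equivariant statements are all standard.
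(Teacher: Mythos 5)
Your argument is essentially correct, but it takes a genuinely different route from the paper's. The paper does not pass through $\Omega^{\infty}(\mathbf A\wedge\mathbb T^n_+)$ at all and in particular uses no duality: it writes down the displayed map explicitly as the adjoint of the composite $\mathbb T^n\times A_{h\I}((S^1)^n_+)\to A_{h\I}((S^1)^n_+)\to A_{h\I}(S^n)$ (action followed by the projection $(S^1_+)^{\wedge n}\to (S^1)^{\wedge n}$), which is $\mathbb T^n$-equivariant by construction, and then proves it is a \emph{non-equivariant} weak equivalence by an induction on $n$ that factors it through maps $\Map(\mathbb T^i,A_{h\I}(S^i\wedge(S^1_+)^{\wedge(n-i)}))\to\Map(\mathbb T^{i+1},A_{h\I}(S^{i+1}\wedge(S^1_+)^{\wedge(n-i-1)}))$, the base case being the very-special $\Gamma$-space result \cite{Sch1}, Proposition 7.1; the passage to $(-)_{hG}$ is then immediate because an equivariant map which is an underlying equivalence induces an equivalence of Borel constructions. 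What the paper's route buys is that it stays entirely within the $\Gamma$-space technology already in play, produces the single direct map asserted in the lemma, and requires no category of $\mathbb T^n$-spectra. Your route buys a conceptual explanation of why the $S^n$ in the target is exactly the dual shift of $\mathbb T^n$, but at the cost of importing equivariant Atiyah--Wirthm\"uller duality (which you rightly flag as the delicate point, including matching the dual of the translation action with translation via inversion and a translation-invariant framing), and it only yields a zig-zag of equivalences rather than the displayed map. For the one application in the paper (Corollary \ref{homotopyorbitcorollary}) an abstract equivalence suffices, so this mismatch is harmless, but you should be aware that your left-hand identification $A_{h\I}((S^1)^n_+)\simeq\Omega^{\infty}(\mathbf A\wedge\mathbb T^n_+)$ is exactly the content of (\ref{equivariantstable}), so the new content of the lemma is concentrated precisely in the step you outsource to duality.
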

\begin{proof}
The space $A_{h\I}((S^1)^n_+)$ has a $\mathbb T^n$-action being the realization of an $n$-fold multi-cyclic space. Consider the composite map 
$$
\mathbb T^n\times A_{h\I}((S^1)^n_+)\to A_{h\I}((S^1)^n_+)\to A_{h\I}(S^n)
$$
where the first map is given by the $\mathbb T^n$-action and the second map is induced by the projection $(S^1_+)^{\wedge n}\to (S^1)^{\wedge n}$. The adjoint is a $\mathbb T^n$-equivariant map
$$
A_{h\I}((S^1)^n_+)\to \Map(\mathbb T^n,A_{h\I}(S^n))
$$ 
which we claim is a non-equivariant equivalence. The condition that $A_{h\I}(S^0)$ be grouplike implies that the $\Gamma$-space $A_{h\I}$ is very special in the sense of \cite{BF} and the result for $n=1$ is therefore an immediate consequence of \cite{Sch1}, Proposition 7.1. The general case can be deduced from this by viewing the functors 
$$
X\mapsto A_{h\I}(S^i\wedge (S^1_+)^{\wedge(n-i-1)}\wedge X)  
$$
as very special $\Gamma$-spaces and factoring the map in question as a composition of maps of the form
$$
\Map(\mathbb T^i,A_{h\I}(S^i\wedge (S^1_+)^{\wedge(n-i)}))\to
\Map(\mathbb T^{i+1},A_{h\I}(S^{i+1}\wedge (S^1_+)^{n-i-1})),
$$ 
each of which is an equivalence. 
\end{proof}

\medskip
\noindent\textit{Proof of Corollary \ref{homotopyorbitcorollary}.} It is well-known that the integral Eilenberg-Mac Lane spectrum $H\mathbb Z$ is an algebra over any oriented Thom $S$-algebra; an explicit construction of such an algebra structure can be found in \cite{Sch2}. Applying Theorem \ref{Ispacehighthom} to the based simplicial set 
$(S^1)_+^n$ we therefore get a stable equivalence
$$
H \mathbb Z \wedge \Lambda_{(S^1)^n}(T)\simeq \Lambda_{(S^1)^n_+}(T,H\mathbb Z)\simeq H\mathbb Z\wedge A_{h\I}(\mathbb T^n_+)_+.
$$
By construction this is the topological realization of a chain of maps of $n$-fold multi-cyclic $S$-modules, hence is $\mathbb T^n$-equivariant. The action on $H\mathbb Z$ is trivial and the result now follows by passing to homotopy orbits and using Lemma  \ref{freetorus}.\qed

\medskip  
\appendix
\section{Multiplicative properties of homotopy colimits}\label{appendix}
We here discuss the general multiplicative properties of homotopy colimits of $S$-modules and we develop the framework necessary for the definition of the $E_{\infty}$ structure on $\Lambda_X(T,M)$ in Section \ref{multiplicativesection} when  $M$ is a commutative 
$T$-algebra. The main result is Corollary \ref{BEhocolim} which states that in a precise sense homotopy colimits take $E_{\infty}$ diagrams to $E_{\infty}$ $S$-algebras. A similar result holds for diagrams of spaces as we explain in Section \ref{spacehocolim}.
\subsection{Symmetric monoidal structures}\label{symmetricmonoidal}
In order to fix terminology we first give a brief review of symmetric monoidal functors, following Mac Lane \cite{Mac}. We say that a monoidal category 
$(\mathcal A,\Box,1_{\mathcal A})$ is \emph{strict monoidal} if the monoidal product is strictly associative and unital, that is,
$$
(a\Box b)\Box c=a\Box(b\Box c),\quad 1_{\mathcal A}\Box a=a,\quad 
a\Box 1_{\mathcal A}=a,
$$  
for all objects $a$, $b$, and $c$ in $\mathcal A$. A \emph{permutative category} is a symmetric monoidal category which is strict monoidal and a \emph{strict symmetric monoidal category} is a permutative category such that the symmetry isomorphisms 
$\tau\co a\Box b\to b\Box a$ are identities. Given a permutative category 
$\mathcal A$, a \emph{monoidal functor} $F\co \mathcal A\to \Sp^{\Sigma}$ is a functor equipped with a unit 
$\eta\co S\to F(1_{\mathcal A})$ and a natural transformation of functors on $\mathcal A\times \mathcal A$,
$$
\mu_{a,b}\co F(a)\wedge F(b)\to F(a\Box b),
$$  
which satisfies that usual associativity and unitality conditions (this is what is sometimes called lax monoidal). The functor $F$ is \emph{symmetric monoidal} if in addition the diagrams
$$
\begin{CD}
F(a)\wedge F(b)@>\mu_{a,b} >> F(a\Box b)\\
@VVV @VV F(\tau) V\\
F(b)\wedge F(a)@> \mu_{b,a}>> F(b\Box a)
\end{CD}
$$
are commutative, where on the left the vertical map is the symmetry isomorphism in 
$\Sp^{\Sigma}$. We say that $F$ is \emph{strong (symmetric) monoidal} if the structure maps 
$\eta$ and $\mu_{a,b}$ are isomorphisms and \emph{strict (symmetric) monoidal} if these maps are identities. Given symmetric monoidal functors $F, G\co \mathcal A\to \Sp^{\Sigma}$, a \emph{monoidal natural transformation} $\phi\co F\to G$ is a natural transformation making the diagrams
$$
\begin{CD}
F(a)\wedge F(b)@>>> F(a\Box b)\\
@VV \phi_a\wedge\phi_b V @VV \phi_{a\Box b} V\\
G(a)\wedge G(b)@>>> G(a\Box b)
\end{CD}
\qquad\text{ and }\qquad
\begin{CD}
S@>>> F(1_{\mathcal A})\\
@| @VV\phi_{1_{\mathcal A}}V\\
S @>>> G(1_{\mathcal A})
\end{CD}
$$   
commutative. Let now $\mathcal D(\Sp^{\Sigma})$ be the category of diagrams in 
$\Sp^{\Sigma}$ as defined in Section \ref{hocolimsection} and consider the symmetric monoidal structure 
which to a pair of objects $(\mathcal A,F)$ and $(\mathcal A',F')$ associates the object 
$(\mathcal A\times \mathcal A,'F\wedge F')$, where $F\wedge F'$ denotes the functor
$$
F\wedge F'\co \mathcal A\times \mathcal A'\to \Sp^{\Sigma}\times \Sp^{\Sigma}\xr{\wedge}\Sp^{\Sigma}. 
$$
The object $(\mathbf 1,S)$ is the unit. A monoid structure on an object $(\mathcal A,F)$ 
amounts to 
\begin{itemize}
\item
a strict monoidal structure on $\mathcal A$, and
\item
a monoidal structure on the functor $F\co\mathcal A\to\Sp^{\Sigma}$.
\end{itemize}
Such a monoid is commutative if and only if $\mathcal A$ is strictly symmetric and the functor $F$ is symmetric monoidal. Given monoids $(\mathcal A,F)$ and $(\mathcal B,G)$, a morphism 
$$
(\varepsilon,\phi)\co (\mathcal A,F)\to (\mathcal B,G) 
$$
in $\mathcal D(\Sp^{\Sigma})$ is a monoid morphism if and only if 
\begin{itemize}
\item
the functor $\varepsilon\co \mathcal A\to\mathcal B$ is strict monoidal, and
\item
the natural transformation $\phi\co F\to G\circ\varepsilon$ is monoidal.
\end{itemize}
The following lemma follows immediately from the fact that smash products of $S$-modules are distributive with respect to wedge products.
\begin{lemma} 
The homotopy colimit functor $\hocolim\co\mathcal D(\Sp^{\Sigma})\to\Sp^{\Sigma}$ is strong symmetric monoidal.\qed
\end{lemma}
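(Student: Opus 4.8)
The plan is to write down the two structure isomorphisms directly from the bar-construction formula (\ref{hocolimdefinition}) for the homotopy colimit and to reduce the coherence axioms to corresponding facts about $\Sp^{\Sigma}$. For the unit: the category $\mathbf 1$ is terminal, so in every simplicial degree $k$ there is a unique $k$-tuple of composable morphisms (all identities on the unique object), whence the simplicial $S$-module computing $\hocolim_{\mathbf 1}S$ is the constant one with value $S$; its realization is canonically $S$, and I take $\eta\co S\to\hocolim_{\mathbf 1}S$ to be this identification, which is an isomorphism.

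For the multiplication, fix objects $(\mathcal A,F)$ and $(\mathcal A',F')$ and compare the simplicial $S$-modules whose realizations are the two sides. In degree $k$ the left-hand side is
\[
\Bigl(\bigvee_{a_0\leftarrow\dots\leftarrow a_k}F(a_k)\Bigr)\wedge\Bigl(\bigvee_{a_0'\leftarrow\dots\leftarrow a_k'}F'(a_k')\Bigr),
\]
which, since smashing with a fixed $S$-module distributes over wedges, is naturally isomorphic to the wedge of the spectra $F(a_k)\wedge F'(a_k')$ running over all pairs consisting of a $k$-tuple of composable arrows in $\mathcal A$ and one in $\mathcal A'$. Such a pair is precisely a $k$-tuple of composable arrows in $\mathcal A\times\mathcal A'$ with last object $(a_k,a_k')$, and $F(a_k)\wedge F'(a_k')=(F\wedge F')(a_k,a_k')$; so this is exactly the degree-$k$ term of the bar construction for $\hocolim_{\mathcal A\times\mathcal A'}(F\wedge F')$. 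These distributivity isomorphisms commute with the face and degeneracy operators, each of which is assembled from the structure maps of $F$ and $F'$ and from fold maps of wedges, all of which respect distributivity; hence they constitute an isomorphism of simplicial $S$-modules. Passing to realizations and using that geometric realization of simplicial $S$-modules is strong monoidal (it is computed levelwise and commutes with the colimits defining $\wedge$, so $|X_\bullet|\wedge|Y_\bullet|\cong|X_\bullet\wedge Y_\bullet|$) yields the required natural isomorphism $\mu\co\hocolim_{\mathcal A}F\wedge\hocolim_{\mathcal A'}F'\to\hocolim_{\mathcal A\times\mathcal A'}(F\wedge F')$.

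It then remains to verify the associativity, unitality and symmetry axioms for $(\eta,\mu)$. Unwinding the definitions, each of these becomes, in every simplicial degree and hence after realization, the corresponding coherence statement for the distributivity isomorphisms in $\Sp^{\Sigma}$ combined with the obvious compatibilities of the cartesian product on $\Cat$ (strict associativity, terminal category a strict unit, symmetry given by swapping factors) and of $\wedge$ on $\Sp^{\Sigma}$; these are routine. I do not anticipate a genuine obstacle here: the only point deserving care is the strong monoidality of realization, but this is standard (realization is a coend that interacts with $\wedge$ through the diagonal on $\Delta$), and everything else is bookkeeping with wedges and smash products. Hence $\hocolim\co\mathcal D(\Sp^{\Sigma})\to\Sp^{\Sigma}$ is strong symmetric monoidal.
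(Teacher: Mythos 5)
Your proof is correct and is exactly the argument the paper has in mind: the paper dismisses the lemma with the single observation that smash products of $S$-modules distribute over wedges, and your write-up simply fills in the details of that reduction (identifying the degree-$k$ terms of the two bar constructions via distributivity, and invoking the strong monoidality of realization). No discrepancy.
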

Thus, for each pair of objects $(\mathcal A,F)$ and $(\mathcal B,G)$ there is a natural isomorphism of $S$-modules 
$$
\hocolim_{\mathcal A}F\wedge \hocolim_{\mathcal A'}F'\cong \hocolim_{\mathcal A\times \mathcal A'}F\wedge F'.
$$
As a consequence, hocolim takes monoids in $\mathcal D(\Sp^{\Sigma})$ to $S$-algebras and commutative monoids to commutative $S$-algebras. However, the condition for 
an object in $\mathcal D(\Sp^{\Sigma})$ to be a commutative monoid is very strong and such objects occur rarely in the applications. In the next section we shall formulate a more useful symmetry condition on objects in $\mathcal D(\Sp^{\Sigma})$ which implies that their homotopy colimits are $E_{\infty}$ $S$-algebras.

\subsection{Homotopy colimits and $E_{\infty}$ structures}\label{hocolimEinfty}
We first recall that if $(\mathcal A,\Box,1_{\mathcal A})$ is a small permutative category, then the category $\mathcal A\Sp^{\Sigma}$ of functors $F\co \mathcal A\to \Sp^{\Sigma}$ inherits a symmetric monoidal product which to a pair of objects $F$ and $G$ associates the object $F\triangle G$ defined by
$$
F\triangle G(a)=\colim_{a_1\Box a_2\to a}F(a_1)\wedge F(a_2).
$$ 
This is the usual Kan extension of the $\mathcal A^2$-diagram $F\wedge G$ along the monoidal structure map $\Box\co \mathcal A^2\to \mathcal A$, see e.g. \cite{MMSS}. Given an operad 
$\mathcal C$ in the sense of \cite{May} we as usual get an induced monad $C$ on 
$\mathcal A\Sp^{\Sigma}$ defined by
$$
C(F)=\bigvee_{n\geq 0}\mathcal C(n)_+\wedge_{\Sigma_n}F^{\triangle n}
$$ 
and we say that $\mathcal C$ acts on an object $F$ in $\mathcal A\Sp^{\Sigma}$ if $F$ is an algebra for this monad. By the universal property of the $\triangle$ product such an action amounts to a sequence of natural transformations
$$
\theta_n\co\mathcal C(n)_+\wedge F(a_1)\wedge\dots\wedge F(a_n)\to F(a_1\Box\dots\Box a_n)
$$ 
where for each $n$ we view both sides as functors from $\mathcal A^n$ to $\Sp^{\Sigma}$. 
These maps are required to satisfy the usual unitality, associativity and equivariance conditions, see \cite{May}, Lemma 1.4. In particular, the equivariance condition amounts to the commutativity of the diagram
$$
\begin{CD}
\mathcal C(n)_+\wedge F(a_1)\wedge\dots\wedge F(a_n)@>\theta_n\circ(\sigma_+\wedge\id)>>
F(a_1\Box\cdots\Box a_n)\\
@VV \id_+\wedge \sigma V @VV F(\sigma) V\\
\mathcal C(n)_+\wedge F(a_{\sigma^{-1}(1)})\wedge \dots\wedge F(a_{\sigma^{-1}(n)})
@>\theta_n >>  F(a_{\sigma^{-1}(1)}\Box\cdots\Box a_{\sigma^{-1}(n)})
\end{CD}
$$
for all elements $\sigma$ in $\Sigma_n$. Here $\sigma$ permutes the factors on the left hand side of the diagram and $F(\sigma)$ is the map obtained by applying $F$ to the canonical isomorphism determined by $\sigma$. We define a \emph{permutative $\mathcal C$-diagram} in $\Sp^\Sigma$ to be a pair $(\mathcal A, F)$ where $\mathcal A$ is a permutative small category
and  $F$ is an object with $\mathcal C$-action in $\mathcal A\Sp^{\Sigma}$. 

\begin{example}\label{Einftyexample}  
For $\mathcal C$ the commutativity operad with $\mathcal C(n)=*$, a permutative $\mathcal C$-diagram $(\mathcal A,F)$ is the same thing as a permutative category $\mathcal A$ together with a symmetric monoidal functor $F\co\mathcal A\to\Sp^{\Sigma}$. 
For example, one can check that B\"okstedt's functor $F_X(T,M)$ from Section \ref{Bsection} is symmetric monoidal if $M$ is a commutative $T$-algebra.
\end{example}

Let $\mathcal P\mathcal D(\Sp^{\Sigma},\mathcal C)$ be the category of permutative $\mathcal C$-diagrams in $\Sp^{\Sigma}$ in which a morphism
$(\varepsilon,\phi)$ from $(\mathcal A,F)$ to $(\mathcal B,G)$
is a morphism in $\mathcal D(\Sp^{\Sigma})$ such that
\begin{itemize}
\item
the functor $\varepsilon\co \mathcal A\to\mathcal B$ is strict symmetric monoidal, and
\item
the natural transformation $\phi\co F\to G\circ\varepsilon$ is a $\mathcal C$-map in 
$\mathcal A\Sp^{\Sigma}$.
\end{itemize}
We claim that the homotopy colimit functor takes an object in 
$\mathcal P\mathcal D(\Sp^{\Sigma},\mathcal C)$ to an $S$-module with a canonical action of the product operad $\mathcal E\times\mathcal C$ where $\mathcal E$ denotes the 
Barratt-Eccles operad. Recall that the $n$th space $\mathcal E(n)$ of this operad is the classifying space of the translation category $\tilde\Sigma_n$. The latter category has as its objects the elements of $\Sigma_n$ and a morphism $\rho\co \sigma\to\tau$ is an element $\rho\in \Sigma_n$ such that $\rho\sigma=\tau$. Since a morphism in $\tilde\Sigma_n$ is uniquely determined by its domain and target, its nerve may be identified with the simplicial set   
\begin{equation}\label{BEsimplicial}
E_{\bullet}\Sigma_n\co[k]\mapsto\Map([k],\Sigma_n)=\Sigma^{k+1}.
\end{equation}
Let $\Sp^{\Sigma}[\mathcal E\times\mathcal C]$ be the category of $S$-modules with 
($\mathcal E\times \mathcal C$)-action.

\begin{proposition}\label{hocolimEinftyprop}
The homotopy colimit functor induces a functor
$$
\hocolim\co\mathcal P\mathcal D(\Sp^{\Sigma},\mathcal C)\to\Sp^{\Sigma}
[\mathcal E\times \mathcal C].
$$
\end{proposition}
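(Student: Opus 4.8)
The plan is to construct the $\mathcal{E}\times\mathcal{C}$-action directly on the bar construction that defines the homotopy colimit, in a manner analogous to the concrete action built in the proof of Proposition \ref{EinftyLambda}. Fix a permutative $\mathcal{C}$-diagram $(\mathcal{A},F)$ and write $Z=\hocolim_{\mathcal{A}}F$. Since the homotopy colimit functor is strong symmetric monoidal (Section \ref{symmetricmonoidal}), since topological realization preserves smash products, and since $\mathcal{C}(n)_+\wedge(-)$ preserves realizations, one identifies $\mathcal{E}(n)_+\wedge\mathcal{C}(n)_+\wedge Z^{\wedge n}$ with the realization of the simplicial $S$-module whose $k$-simplices are
$$
\Sigma_n^{k+1}{}_+\wedge\mathcal{C}(n)_+\wedge\bigvee_{\vec a^{\,1},\dots,\vec a^{\,n}}F(a_k^1)\wedge\dots\wedge F(a_k^n),
$$
the wedge running over all $n$-tuples of $k$-chains $\vec a^{\,i}=(a_0^i\la\dots\la a_k^i)$ in $\mathcal{A}$; here we use the simplicial model $E_{\bullet}\Sigma_n$ of $\mathcal{E}(n)$ from (\ref{BEsimplicial}). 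Since the simplicial degree $k$ part of $Z$ is $\bigvee_{\psi_0\la\dots\la\psi_k}F(\psi_k)$, it suffices to produce a map of simplicial $S$-modules from the displayed object to $[k]\mapsto\bigvee_{\psi_0\la\dots\la\psi_k}F(\psi_k)$, natural in $(\mathcal{A},F)$, and to check that the collection of such maps for varying $n$ satisfies the operad axioms for $\mathcal{E}\times\mathcal{C}$.

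First I would define the degree $k$ map. Given $(\sigma_0,\dots,\sigma_k)\in\Sigma_n^{k+1}$ and chains $\vec a^{\,1},\dots,\vec a^{\,n}$, set $\psi_j=a_j^{\sigma_j^{-1}(1)}\Box\dots\Box a_j^{\sigma_j^{-1}(n)}$ and let $\psi_j\to\psi_{j-1}$ be the composite of the coherence isomorphism of $\mathcal{A}$ reordering the $\Box$-factors from the $\sigma_j$-ordering to the $\sigma_{j-1}$-ordering (the isomorphism determined by the unique morphism $\sigma_j\to\sigma_{j-1}$ of $\tilde\Sigma_n$) with the $\Box$-product of the structure morphisms $a_j^i\to a_{j-1}^i$. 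This yields a $k$-chain $\psi_0\la\dots\la\psi_k$, and I send the wedge summand indexed by $(\vec a^{\,1},\dots,\vec a^{\,n})$ to the one indexed by this chain via
$$
\mathcal{C}(n)_+\wedge F(a_k^1)\wedge\dots\wedge F(a_k^n)\xr{\sigma_k}\mathcal{C}(n)_+\wedge F(a_k^{\sigma_k^{-1}(1)})\wedge\dots\wedge F(a_k^{\sigma_k^{-1}(n)})\xr{\theta_n}F(\psi_k),
$$
where $\sigma_k$ permutes the smash factors and $\theta_n$ is the structure map of the $\mathcal{C}$-action. The $\mathcal{C}$-coordinate is threaded through unchanged, which simultaneously provides the action of the factor operad $\mathcal{C}$.

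Next I would verify the required compatibilities. The simplicial identities reduce to three points: degeneracies insert an identity morphism, matching the case $\sigma_j=\sigma_{j+1}$ in which the reordering isomorphism is the identity; the faces $d_i$ with $i<k$ require that the chosen reordering isomorphisms compose correctly along a chain, which is Mac Lane's coherence theorem for the symmetry of $\mathcal{A}$ together with functoriality of $F$; and the face $d_k$ requires the naturality and equivariance axioms for $\theta_n$ recalled in Section \ref{hocolimEinfty}. The operad axioms then follow formally: unitality and the $\mathcal{C}$-associativity are inherited from the $\mathcal{C}$-action on $F$, while $\mathcal{E}$-associativity and the compatibility between left translation on $\tilde\Sigma_n$ and permutation of smash factors and chains come again from the equivariance of $\theta_n$ and the coherence of $\mathcal{A}$. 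Finally, for a morphism $(\varepsilon,\phi)\co(\mathcal{A},F)\to(\mathcal{B},G)$ in $\mathcal P\mathcal D(\Sp^{\Sigma},\mathcal{C})$, the induced map of homotopy colimits respects the $\mathcal{E}\times\mathcal{C}$-actions because $\varepsilon$ is strict symmetric monoidal — hence commutes with $\Box$ and with all reordering isomorphisms on the nose — and $\phi$ is a $\mathcal{C}$-map, so commutes with the maps $\theta_n$; functoriality of the assignment $(\mathcal{A},F)\mapsto Z$ into $\Sp^{\Sigma}[\mathcal{E}\times\mathcal{C}]$ is then immediate.

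The main obstacle is the bookkeeping in the verification of the simplicial identities: one must keep careful track of how the chosen reordering isomorphisms of $\mathcal{A}$ interact with composition of arrows in a chain and with the permutation of smash factors. Everything is forced by coherence, but the diagrams are somewhat intricate. Conceptually this is the $E_\infty$-operadic refinement of the Thomason-type statement that the homotopy colimit of a lax symmetric monoidal functor on a symmetric monoidal category is an $E_\infty$ $S$-algebra, and the Barratt-Eccles operad appears precisely because its $n$th space is the nerve of the translation category $\tilde\Sigma_n$, which records exactly the reorderings needed to convert the genuinely non-strict symmetry of $\mathcal{A}$ into coherent operadic structure on the bar construction.
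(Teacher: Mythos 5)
Your construction is correct and is essentially the paper's own proof: both identify $(\mathcal E(n)\times\mathcal C(n))_+\wedge(\hocolim_{\mathcal A}F)^{\wedge n}$ with the realization of the displayed simplicial $S$-module via the model $E_{\bullet}\Sigma_n$ of $\mathcal E(n)$, send the wedge summand indexed by the tuple of chains to the summand indexed by the chain of $\Box$-products reordered according to $(\sigma_0,\dots,\sigma_k)$, and define the map on that summand using $\theta_n$ together with the canonical symmetry isomorphism of $\mathcal A$ determined by $\sigma_k$ (your order of composition -- permute the smash factors, then apply $\theta_n$ -- agrees with the paper's -- apply $\theta_n$, then $F(\sigma_k)$ -- by the equivariance axiom recalled in Section \ref{hocolimEinfty}). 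The paper leaves the verification of the simplicial identities and the operad axioms to the reader in exactly the form you outline.
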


Before proving this result we first recall from \cite{May1}, Theorem 4.9, that the classifying space of a permutative category $(\mathcal A,\Box,1_{\mathcal A})$ has a canonical 
$\mathcal E$-action. Indeed, the symmetric monoidal structure of $\mathcal A$ gives rise to functors
\begin{equation}\label{BEaction}
\tilde\Sigma_n\times\mathcal A^n\to\mathcal A,\quad (\sigma,(a_1,\dots,a_n))
\mapsto a_{\sigma^{-1}(1)}\Box\dots\Box a_{\sigma^{-1}(n)},
\end{equation}
inducing a sequence of simplicial maps 
$$
B_{\bullet}\tilde\Sigma_n\times B_{\bullet}\mathcal A^n\to B_{\bullet}\mathcal A.
$$
In simplicial degree $k$ this takes an element
$$
(\sigma_0,\dots,\sigma_k)\in \Sigma_n^{k+1},\quad (a_0^i\leftarrow\dots\leftarrow a_k^i)\in B_k\mathcal A, \text{ for $i=1,\dots,n$},
$$ 
to the $k$-simplex
\begin{equation}\label{ksimplex}
a_0^{\sigma_0^{-1}(1)}\Box\dots\Box a_0^{\sigma_0^{-1}(n)}\leftarrow\dots\leftarrow
a_k^{\sigma_k^{-1}(1)}\Box\dots\Box a_k^{\sigma_k^{-1}(n)}.
\end{equation}
It follows from the axioms for a permutative category that this defines an action of 
$\mathcal E$ on $B\mathcal A$ after topological realization.

\medskip
\noindent\textit{Proof of Proposition \ref{hocolimEinftyprop}.}
Let $(\mathcal A,F)$ be an object of $\mathcal P\mathcal D(\Sp^{\Sigma},\mathcal C)$. 
An action of $\mathcal E\times \mathcal C$ on $\hocolim_{\mathcal A}F$ then amounts to a sequence of maps of $S$-modules
$$ 
(\mathcal E(n)\times \mathcal C(n))_+\wedge(\hocolim_{\mathcal A}F)^{\wedge n}
\to \hocolim_{\mathcal A}F
$$
that satisfy the usual unitality, associativity and equivariance conditions. Using that hocolim is strong symmetric monoidal and that topological realization preserves smash products, it suffices to define maps of simplicial $S$-modules
$$
(\mathcal C(n)\times\Sigma_n^{k+1})_+\wedge\bigvee_{\left\{\substack{a_0^i\leftarrow\dots\leftarrow a_k^i,\\
\text{$i=1,\dots, n$}}\right\}}
 F(a_k^1)\wedge\dots\wedge F(a_k^n)\to
\bigvee_{\left\{b_0\leftarrow\dots\leftarrow b_k\right\}} F(b_k)
$$
satisfying the analogous conditions. Keeping an element $(\sigma_0,\dots,\sigma_k)$ of 
$\Sigma^{k+1}_n$ fixed, we do this using the maps
$$
\mathcal C(n)_+\wedge F(a_k^1)\wedge \dots\wedge F(a_k^n)\xr{\theta_n} 
F(a_k^1\Box\dots\Box a_k^n)
\to F(a_k^{\sigma^{-1}_k(1)}\Box\dots\Box a_k^{\sigma_k^{-1}(n)})
$$
where $\theta_n$ denotes the $\mathcal C$-action on $F$ and the second arrow is obtained by applying $F$ to the canonical morphism induced by the permutation $\sigma_k$. 
Thus, each of the wedge components of the domain is mapped to the corresponding wedge component of the target indexed by the $k$-simplex in (\ref{ksimplex}). It is easy to verify that this is a simplicial map and that the required relations hold.
Furthermore, a morphism in $\mathcal P\mathcal D(\Sp^{\Sigma},\mathcal C)$ gives rise to a map of $S$-modules which is clearly compatible with the 
($\mathcal E\times \mathcal C$)-action. \qed

\medskip
Notice that if $\mathcal C$ is augmented over $\mathcal E$ in the sense that there is a map of operads $\mathcal C\to\mathcal E$, then we may pull an ($\mathcal E\times \mathcal C$)-action  back to a $\mathcal C$-action via the diagonal $\mathcal C\to\mathcal E\times\mathcal C$. In particular this applies if $\mathcal C$ is the operad $\mathcal E$ itself and gives the following corollary.

\begin{corollary}\label{BEhocolim}
The homotopy colimit functor induces a functor
$$
\hocolim\co\mathcal P\mathcal D(\Sp^{\Sigma},\mathcal E)\to\Sp^{\Sigma}[\mathcal E]
\eqno\qed
$$
\end{corollary}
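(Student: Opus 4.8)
The plan is to obtain the corollary as a direct consequence of Proposition \ref{hocolimEinftyprop} together with the observation recorded in the paragraph immediately preceding its statement. Specializing Proposition \ref{hocolimEinftyprop} to the case $\mathcal C=\mathcal E$ gives a functor
$$
\hocolim\co\mathcal P\mathcal D(\Sp^{\Sigma},\mathcal E)\to\Sp^{\Sigma}[\mathcal E\times\mathcal E],
$$
so it remains only to restrict the resulting $(\mathcal E\times\mathcal E)$-action along a map of operads $\mathcal E\to\mathcal E\times\mathcal E$. First I would check that the diagonal $\Delta\co\mathcal E\to\mathcal E\times\mathcal E$ is a map of operads: by definition of the product operad the two projections $\mathcal E\times\mathcal E\to\mathcal E$ are maps of operads, and since both composites $\mathcal E\xr{\Delta}\mathcal E\times\mathcal E\to\mathcal E$ are the identity, $\Delta$ is compatible with the structure maps, the units, and the $\Sigma_n$-actions. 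Pulling the $(\mathcal E\times\mathcal E)$-action of Proposition \ref{hocolimEinftyprop} back along $\Delta$ therefore equips $\hocolim_{\mathcal A}F$ with an $\mathcal E$-action, that is, with the structure of an object of $\Sp^{\Sigma}[\mathcal E]$.

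Next I would verify functoriality. A morphism $(\varepsilon,\phi)$ in $\mathcal P\mathcal D(\Sp^{\Sigma},\mathcal E)$ induces, by Proposition \ref{hocolimEinftyprop}, a map of $S$-modules that is $(\mathcal E\times\mathcal E)$-equivariant; restricting equivariance along $\Delta$ shows it is $\mathcal E$-equivariant. Since restriction of operad actions along a fixed map of operads manifestly preserves identities and composites, this produces the asserted functor
$$
\hocolim\co\mathcal P\mathcal D(\Sp^{\Sigma},\mathcal E)\to\Sp^{\Sigma}[\mathcal E].
$$
The same argument applies verbatim with $\mathcal E$ replaced by any operad $\mathcal C$ equipped with a map $\mathcal C\to\mathcal E$, using the diagonal $\mathcal C\to\mathcal E\times\mathcal C$; this is the slightly more general form of the statement indicated in the preceding paragraph, and in particular it covers the case of the associativity operad.

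For the applications in Section \ref{multiplicativesection} it is the explicit shape of the $\mathcal E$-structure that matters, so I would also record it. Unwinding the construction of Proposition \ref{hocolimEinftyprop} through the diagonal, the structure map $\mathcal E(n)_+\wedge(\hocolim_{\mathcal A}F)^{\wedge n}\to\hocolim_{\mathcal A}F$ is the realization of the map of simplicial $S$-modules which, for a fixed $(\sigma_0,\dots,\sigma_k)\in\Sigma_n^{k+1}$, sends the wedge summand indexed by $(a_0^i\leftarrow\dots\leftarrow a_k^i)$ for $i=1,\dots,n$ to the summand indexed by $a_0^{\sigma_0^{-1}(1)}\Box\dots\Box a_0^{\sigma_0^{-1}(n)}\leftarrow\dots\leftarrow a_k^{\sigma_k^{-1}(1)}\Box\dots\Box a_k^{\sigma_k^{-1}(n)}$ via $F$ applied to the composite of the $\mathcal C$-action $\theta_n$ with the permutation isomorphism determined by $\sigma_k$ — exactly the description invoked in the proof of Proposition \ref{EinftyLambda}. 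There is no real obstacle here: all the content is already in Proposition \ref{hocolimEinftyprop}, and the only point requiring (trivial) care is that the two operad factors genuinely play different roles — one coming from the permutative structure of $\mathcal A$, the other from the $\mathcal C$-action on $F$ — so that the diagonal indeed combines them rather than forcing them to coincide.
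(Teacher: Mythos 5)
Your argument is exactly the paper's: specialize Proposition \ref{hocolimEinftyprop} to $\mathcal C=\mathcal E$ and pull the resulting $(\mathcal E\times\mathcal E)$-action back along the diagonal $\mathcal E\to\mathcal E\times\mathcal E$, noting more generally that any operad augmented over $\mathcal E$ works via $\mathcal C\to\mathcal E\times\mathcal C$. The proposal is correct, and the extra verifications (that the diagonal is an operad map, functoriality, the explicit form of the structure maps) are fine but not needed beyond what the paper records.
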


\subsection{Homotopy colimits of spaces}\label{spacehocolim}
Everything done for diagrams of $S$-modules above have obvious analogues for diagrams of (unbased) spaces. Thus, we have the symmetric monoidal category $\mathcal D(\mathcal U)$ of diagrams in $\mathcal U$ and the Bousfield-Kan homotopy colimit functor
$\mathcal D(\mathcal U)\to\mathcal U$
is defined by replacing the wedge products in (\ref{hocolimdefinition}) by coproducts of spaces. This is again a strong symmetric monoidal functor. Given an operad $\mathcal C$ we have the category $\mathcal P\mathcal D(\mathcal U,\mathcal C)$ of permutative diagrams with $\mathcal C$-action and the following analogues of Proposition \ref{hocolimEinftyprop} and Corollary \ref{BEhocolim}. 

\begin{proposition}
The homotopy colimit functor induces a functor
$$
\hocolim\co\mathcal P\mathcal D(\U,\mathcal C)\to \mathcal U[\mathcal E\times \mathcal C].
\eqno\qed
$$
\end{proposition}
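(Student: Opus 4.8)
The plan is to repeat the proof of Proposition~\ref{hocolimEinftyprop} essentially verbatim, replacing $\Sp^{\Sigma}$ throughout by $\U$, wedge sums by disjoint unions, and smash products by cartesian products. The inputs this requires are all already available: that the Bousfield--Kan homotopy colimit $\mathcal D(\U)\to\U$ is strong symmetric monoidal, recorded in Section~\ref{spacehocolim}; that the classifying space $B\mathcal A$ of a small permutative category $\mathcal A$ carries a canonical $\E$-action induced by the functors $\tilde\Sigma_n\times\mathcal A^n\to\mathcal A$ of (\ref{BEaction}), by May~\cite{May1}, Theorem~4.9; and the $\mathcal C$-action on the object $F$ of $\mathcal A\Sp^{\Sigma}$, which in the space-level setting $F\in\mathcal A\U$ reads as a sequence of natural transformations $\theta_n\co\mathcal C(n)\times F(a_1)\times\dots\times F(a_n)\to F(a_1\Box\dots\Box a_n)$ satisfying the usual unitality, associativity and equivariance conditions.

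First I would unwind what an $(\E\times\mathcal C)$-action on $\hocolim_{\mathcal A}F$ amounts to, namely a compatible sequence of maps $(\E(n)\times\mathcal C(n))\times(\hocolim_{\mathcal A}F)^{\times n}\to\hocolim_{\mathcal A}F$ satisfying the operad axioms. Using that $\hocolim$ is strong symmetric monoidal, together with the fact that topological realization of simplicial spaces commutes with finite products, and the simplicial model $\E(n)=|E_\bullet\Sigma_n|$ of (\ref{BEsimplicial}), this reduces --- exactly as in the $S$-module case --- to constructing, for each $n$ and $k$, a map of simplicial spaces
$$
(\mathcal C(n)\times\Sigma_n^{k+1})\times\coprod_{\left\{\substack{a_0^i\leftarrow\dots\leftarrow a_k^i,\\ i=1,\dots,n}\right\}}F(a_k^1)\times\dots\times F(a_k^n)\to\coprod_{\left\{b_0\leftarrow\dots\leftarrow b_k\right\}}F(b_k)
$$
compatible with face and degeneracy maps. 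For a fixed $(\sigma_0,\dots,\sigma_k)\in\Sigma_n^{k+1}$ I would map the component indexed by $(a_0^i\leftarrow\dots\leftarrow a_k^i)$, $i=1,\dots,n$, to the component indexed by the $k$-simplex $a_0^{\sigma_0^{-1}(1)}\Box\dots\Box a_0^{\sigma_0^{-1}(n)}\leftarrow\dots\leftarrow a_k^{\sigma_k^{-1}(1)}\Box\dots\Box a_k^{\sigma_k^{-1}(n)}$ via the composite $\mathcal C(n)\times F(a_k^1)\times\dots\times F(a_k^n)\xrightarrow{\theta_n}F(a_k^1\Box\dots\Box a_k^n)\to F(a_k^{\sigma_k^{-1}(1)}\Box\dots\Box a_k^{\sigma_k^{-1}(n)})$, the last arrow being $F$ applied to the canonical isomorphism determined by $\sigma_k$. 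Checking that this is simplicial and that the resulting maps satisfy unitality, associativity and equivariance is routine and proceeds precisely as in the proof of Proposition~\ref{hocolimEinftyprop}, invoking the permutative axioms for $\mathcal A$ and the $\mathcal C$-algebra axioms for $F$; and a morphism $(\varepsilon,\phi)$ in $\mathcal P\mathcal D(\U,\mathcal C)$, with $\varepsilon$ strict symmetric monoidal and $\phi$ a $\mathcal C$-map, visibly induces a map of homotopy colimits compatible with these actions.

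Since the argument is a transcription of the spectral one, there is no genuine obstacle; the only places where one truly uses that the ambient category is $\U$ rather than all topological spaces are the two point-set facts quoted above --- distributivity of finite products over arbitrary coproducts of spaces (the analogue, for $\hocolim$ being strong symmetric monoidal, of the distributivity of $\wedge$ over $\vee$) and the commutation of topological realization with finite products --- both of which hold in the category of compactly generated weak Hausdorff spaces fixed in Section~\ref{conventions}. I would simply flag these and otherwise refer the reader to the proof of Proposition~\ref{hocolimEinftyprop}.
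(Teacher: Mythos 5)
Your proposal is correct and is exactly the argument the paper intends: the paper explicitly treats this proposition as the evident transcription of Proposition \ref{hocolimEinftyprop} to diagrams of spaces, with coproducts replacing wedges and cartesian products replacing smash products, and offers no further proof. Your flagging of the two point-set facts (distributivity of products over coproducts and commutation of realization with finite products in compactly generated weak Hausdorff spaces) is a reasonable addition but does not constitute a different route.
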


\begin{corollary}\label{spaceEinfty}
The homotopy colimit functor induces a functor
$$
\hocolim\co\mathcal P\mathcal D(\mathcal U,\mathcal E)\to\mathcal U[\mathcal E].
\eqno\qed
$$
\end{corollary}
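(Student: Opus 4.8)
The plan is to deduce Corollary~\ref{spaceEinfty} from the space-level analogue of Proposition~\ref{hocolimEinftyprop}, exactly as Corollary~\ref{BEhocolim} was deduced from Proposition~\ref{hocolimEinftyprop}. Thus the first step is to establish that for an arbitrary operad $\mathcal C$ the Bousfield--Kan homotopy colimit induces a functor $\hocolim\co\mathcal P\mathcal D(\U,\mathcal C)\to\U[\mathcal E\times\mathcal C]$; the corollary then follows by specializing to $\mathcal C=\mathcal E$ and pulling the resulting $\mathcal E\times\mathcal E$-action back along the diagonal map of operads $\mathcal E\to\mathcal E\times\mathcal E$, just as in the remark preceding Corollary~\ref{BEhocolim}.

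For the first step I would transcribe the proof of Proposition~\ref{hocolimEinftyprop} line by line, replacing wedge products of $S$-modules by coproducts of spaces and smash products by cartesian products. Concretely, given an object $(\mathcal A,F)$ of $\mathcal P\mathcal D(\U,\mathcal C)$, one uses that $\hocolim$ is strong symmetric monoidal (noted in Section~\ref{spacehocolim}) and that geometric realization of simplicial spaces commutes with finite products in $\U$ to reduce the construction of an $\mathcal E\times\mathcal C$-action on $\hocolim_{\mathcal A}F$ to defining, for each $n$ and $k$, maps of simplicial spaces
$$
(\mathcal C(n)\times\Sigma_n^{k+1})\times\coprod_{\{a_0^i\leftarrow\dots\leftarrow a_k^i\}} F(a_k^1)\times\dots\times F(a_k^n)\to \coprod_{\{b_0\leftarrow\dots\leftarrow b_k\}} F(b_k),
$$
where the index sets run over $n$-tuples of chains in $\mathcal A$, resp.\ chains in $\mathcal A$. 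Keeping $(\sigma_0,\dots,\sigma_k)\in\Sigma_n^{k+1}$ fixed, the component indexed by the chains $a_0^i\leftarrow\dots\leftarrow a_k^i$ is carried to the component indexed by the $k$-simplex $a_0^{\sigma_0^{-1}(1)}\Box\dots\Box a_0^{\sigma_0^{-1}(n)}\leftarrow\dots\leftarrow a_k^{\sigma_k^{-1}(1)}\Box\dots\Box a_k^{\sigma_k^{-1}(n)}$ via the composite $\mathcal C(n)\times F(a_k^1)\times\dots\times F(a_k^n)\xr{\theta_n} F(a_k^1\Box\dots\Box a_k^n)\to F(a_k^{\sigma_k^{-1}(1)}\Box\dots\Box a_k^{\sigma_k^{-1}(n)})$, where $\theta_n$ is the $\mathcal C$-action on $F$ and the second map applies $F$ to the canonical isomorphism determined by $\sigma_k$. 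One then checks that this is a simplicial map and that the collection satisfies the unitality, associativity and equivariance axioms for an $\mathcal E\times\mathcal C$-action; these verifications, together with the fact that the $\mathcal E$-factor of the action is the May action on $B\mathcal A$ coming from the permutative structure, are formally identical to the $S$-module case. Functoriality in $(\mathcal A,F)$ is immediate: a morphism $(\varepsilon,\phi)$ of $\mathcal P\mathcal D(\U,\mathcal C)$ has $\varepsilon$ strict symmetric monoidal and $\phi$ a $\mathcal C$-map, so the induced map of homotopy colimits commutes with all of the structure maps above.

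The only place where genuine care is needed — and the reason for working throughout with compactly generated weak Hausdorff spaces — is the interplay of cartesian products with coproducts and with geometric realization: the reductions above use that $X\times(-)$ preserves coproducts in $\U$ and that realization of simplicial spaces commutes with finite products, both of which fail in the category of all topological spaces. I do not anticipate any other obstacle; once these point-set facts are in place the argument is the promised ``obvious analogue'' of Section~\ref{hocolimEinfty}, and the corollary drops out by restriction along $\mathcal E\to\mathcal E\times\mathcal E$.
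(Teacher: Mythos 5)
Your proposal is correct and follows exactly the route the paper intends: the paper proves Corollary~\ref{spaceEinfty} by declaring it the ``obvious analogue'' of Corollary~\ref{BEhocolim}, i.e.\ by transcribing the proof of Proposition~\ref{hocolimEinftyprop} with coproducts and cartesian products in place of wedges and smash products, and then restricting along the diagonal $\mathcal E\to\mathcal E\times\mathcal E$. Your added remarks about compactly generated spaces simply make explicit the point-set facts the paper takes for granted.
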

This applies in particular to any symmetric monoidal functor 
$F\co\mathcal A\to\mathcal U$ defined on a permutative category $\mathcal A$; hence 
$\hocolim_{\mathcal A}F$ then has a canonical $E_{\infty}$ action.

\end{document}